\documentclass[a4paper, 11pt, reqno]{amsart}
\usepackage{amsmath,amsfonts,amssymb,graphics,mathrsfs,amsthm,eurosym,verbatim,enumerate,subcaption}
\usepackage[marginratio=1:1,tmargin=117pt, height=650pt]{geometry}
\usepackage[usenames, dvipsnames]{xcolor}
\usepackage[shortlabels]{enumitem}

\usepackage[normalem]{ulem}
\usepackage{bm}
\usepackage{tikz-cd}
\usepackage[colorlinks=true,linkcolor=blue!60!black,citecolor=teal!80!black,urlcolor=RoyalBlue,hypertexnames=false]{hyperref}

\numberwithin{equation}{section}
\usepackage{cleveref}
\makeatletter
\def\blfootnote{\xdef\@thefnmark{}\@footnotetext}
\makeatother
\theoremstyle{plain}
\newtheorem{theorem}{Theorem}[section]
\newtheorem{proposition}[theorem]{Proposition}
\newtheorem{corollary}[theorem]{Corollary}
\newtheorem{lemma}[theorem]{Lemma}
\newtheorem{definition}[theorem]{Definition}

\newcommand*{\defeq}{\mathrel{\vcenter{\baselineskip0.5ex \lineskiplimit0pt
 \hbox{\scriptsize.}\hbox{\scriptsize.}}}=}
\newcommand{\eqdef}{=\mathrel{\vcenter{\baselineskip0.5ex \lineskiplimit0pt
	\hbox{\scriptsize.}\hbox{\scriptsize.}}}}
\newtheorem*{remark}{Remark}
\newtheorem{observation}[theorem]{Observation}

\theoremstyle{remark}

\newcommand{\HH}{{\mathbb{H}}}
\newcommand{\C}{{\mathbb{C}}}

\newcommand{\R}{{\mathbb{R}}}
\newcommand{\Z}{{\mathbb{Z}}}
\newcommand{\N}{{\mathbb{N}}}

\newcommand{\M}{{\mathcal{M}}}
\newcommand{\Mlog}{{\mathcal{M}_{\log}}}
\newcommand{\B}{{\mathcal{B}}}

\newcommand{\qfor}{\quad\text{for }}
\newcommand{\uldelta}{\underline{\delta}}
\newcommand{\uleta}{\underline{\eta}}

\newcommand{\ulr}{\underline{r}}
\newcommand{\ult}{\underline{t}}

\newcommand{\Rea}{\operatorname{Re }}
\newcommand{\Ima}{\operatorname{Im }}

\newcommand{\dist}{\operatorname{dist}}

\begin{document}
\title[A finite-order answer to a problem of Erd\H{o}s]
{A finite-order answer to a problem of Erd\H{o}s on maximum modulus points}
\author[{L. Pardo-Sim\'on \and D. J. Sixsmith}]{Leticia Pardo-Sim\'on \and David J. Sixsmith}

\address{\noindent Dept. de Matemàtiques i Informàtica\\ Universitat de Barcelona\\ Spain\\
	\newline  Centre de Recerca Matemàtica\\ Bellaterra\\ Catalonia\\ Spain.
	\textsc{\newline \indent 
		\href{https://orcid.org/0000-0003-4039-5556%
		}{\includegraphics[width=1em,height=1em]{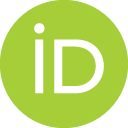} {\normalfont https://orcid.org/0000-0003-4039-5556}}
}}
\email{lpardosimon@ub.edu}

\address{School of Mathematics and Statistics\\ The Open University\\
Milton Keynes MK7 6AA\\ UK\textsc{\newline \indent \href{https://orcid.org/0000-0002-3543-6969}{\includegraphics[width=1em,height=1em]{orcid2.png} {\normalfont https://orcid.org/0000-0002-3543-6969}}}}
\email{david.sixsmith@open.ac.uk}

\thanks{This work was partially supported by the project PID2023-147252NB-I00 financed by funded by MICIU/AEI/10.13039/501100011033 and by FEDER, EU, and by the Spanish State Research Agency, through the Severo Ochoa and María de Maeztu Program for Centers and Units of Excellence in R\&D (CEX2020-001084-M). The first author is a Serra Húnter fellow.
}
\subjclass[2020]{30D15 (primary).}
\begin{abstract}
In 1964, Erd\H{o}s asked whether, for a non-monomial entire function, the number of maximum modulus points on the circle  \(|z|=r\) can become arbitrarily large as $r\to\infty$. In 1968, Herzog and Piranian answered this question affirmatively, but without quantitative control on the resulting function. We prove that such an example can be chosen to have finite order and, moreover, to belong to the Eremenko--Lyubich class \(\B\). We also prove an interpolation theorem for maximum modulus sets: prescribed points with pairwise distinct moduli can be forced to lie in the maximum modulus set of some function in \(\B\), with finite order under a geometric separation condition.
\end{abstract}
\maketitle

\section{Introduction}
For \(r\geq 0\), write \(M(r,f)\defeq \max_{|z|=r}|f(z)|\). Following \cite{Sixsmithmax}, we denote by \(\M(f)\) the \emph{maximum modulus set} of \(f\), that is,
\begin{equation}
	\label{Mdef}
	\M(f)\defeq \{z\in\C\colon |f(z)|=M(|z|,f)\}.
\end{equation}
If \(f\) is a monomial, then \(\M(f)=\C\). Otherwise, \(\M(f)\) consists of a union of closed maximum curves, which are analytic except possibly at their endpoints; see \cite[Theorem 10]{valironlectures} or \cite{Blumenthal}. Maximum modulus sets have been studied by many authors; see, for example, \cite{hardy1909, jassimlondon,tyler,Csordas_1990,Blu_conj, letidave_polynomial}.

For a non-monomial entire function \(f\), the intersection of \(\M(f)\) with each circle \(|z|=r\), \(r>0\), is finite. We denote its cardinality by
\[
v_f(r)\defeq \#(\M(f)\cap\{|z|=r\}).
\]
The elementary examples \(f_n(z)=\exp(z^n)\), $n\in \N$, show that there is no local obstruction to having many maximum modulus points on a circle: in this case \(v_{f_n}(r)=n\) for every \(r>0\). Erd\H{o}s asked whether this phenomenon can occur, with \(n\) tending to infinity, for a single non-monomial entire function. More precisely, he asked the following.

\noindent\textbf{Question \textup{(Erd\H{o}s, 1964).}}
Let \(f\) be a non-monomial entire function.
\begin{enumerate}
	\item Can \(v_f\) be unbounded, that is, can one have \(\limsup_{r\to\infty}v_f(r)=\infty\)?
	\item Can \(v_f\) tend to infinity, that is, can one have \(\liminf_{r\to\infty}v_f(r)=\infty\)?
\end{enumerate}

\noindent
The problem seems first to have appeared in Hayman's book on research problems in function theory \cite{Hayman1967}; see also \cite{leti_Adi_Erdos} for further historical discussion, including the later attribution of the second part to Clunie.

The first part of Erd\H{o}s' question was answered positively by Herzog and Piranian \cite{HerzogPiranian1968}, who constructed an entire function \(f\) such that \(v_f(n)=n\) for every \(n\in\N\). Their construction is ingenious, but very brief: it is presented as a sketch based on successive refinements, and it does not give quantitative estimates for the resulting function. In particular, it gives no control on the order of growth of \(f\), and no information about its singular values. It also gives no useful control of \(v_f(r)\) away from the prescribed radii, so it does not address the second part of the question.

The second part remains open. An approximate positive result was obtained in \cite{leti_Adi_Erdos}: there is an entire function for which, on every sufficiently large circle, many arcs are arbitrarily close to the maximum modulus and are separated by many arcs where the function is close to zero. However, the function constructed there has infinite lower order. On the other hand, Marchenko has conjectured that the answer to the second part of Erd\H{o}s' question should be negative for functions of finite lower order \cite{Marchenko2012}. Thus finite order is not a harmless technical restriction in this problem; it is a natural boundary at which even the weaker, \(\limsup\), phenomenon becomes interesting.

Recall that the order of an entire function \(f\) is
\[
\rho(f)\defeq \limsup_{r\to\infty}\frac{\log\log M(r,f)}{\log r}.
\]
Recall also that the Eremenko--Lyubich class \(\B\) consists of transcendental entire functions whose set of finite singular values is bounded. Finite order gives global control on the growth of \(f\), while being in class \(\B\) gives control on the singularities of its inverse branches. It is therefore natural to ask whether the Herzog--Piranian phenomenon is compatible with these two standard restrictions.

Our first main result shows that it is.

\begin{theorem}
	\label{th:1}
	There is an entire function \(f\in\B\), of finite order, such that
	\[
	\limsup_{r\to\infty} v_f(r)=\infty.
	\]
\end{theorem}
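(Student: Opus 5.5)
The plan is to deduce Theorem~\ref{th:1} from an interpolation statement for maximum modulus sets, of the kind announced in the abstract. Choose radii \(r_1<r_2<\cdots\) growing geometrically fast, say \(r_{n+1}\geq r_n^{2}\). On the circle \(|z|=r_n\), take \(n\) points \(z_{n,1},\dots,z_{n,n}\). These points must have pairwise distinct moduli, so we perturb them to lie at radii \(r_n(1+j\varepsilon_n)\), \(j=1,\dots,n\), with \(\varepsilon_n\) small, and at equally spaced arguments. We then construct \(f\in\B\) of finite order with all \(z_{n,j}\in\M(f)\). Since the moduli differ, this gives only one point per circle, so the perturbation alone is not enough. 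We must additionally show that \(\M(f)\) has \(n\) distinct maximum curves passing near these \(n\) points. Continuity then yields a single radius near \(r_n\) that each of the \(n\) curves crosses, giving \(v_f\geq n\) there. The curves must be disjoint, which we arrange by separating their arguments.

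The construction I would use is a model function built by quasiconformal surgery (or a Cauchy-integral approximation). Consider \(g(z)=\exp(h(z))\) where, near the annulus \(A_n=\{r_n/2<|z|<2r_n\}\), \(\log|g|\) behaves like \(\operatorname{Re}(c_n z^{n})\)-type bumps. More concretely, in \(A_n\) take \(\log|g(z)|\approx \Phi(|z|)+\delta_n\cos(n\arg z)\), where \(\Phi\) is a convex function of \(\log r\) with \(\Phi(r)\asymp r^{\rho}\) and \(\delta_n\) is small relative to \(\Phi(r_n)\). Such a harmonic correction produces \(n\) local maxima on each circle in \(A_n\), at the arguments \(2\pi j/n\). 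To stay in \(\B\), I would realise \(f\) as a composition \(\exp\circ F\), where \(F\) is a model from a tract construction: a single logarithmic tract whose boundary is shaped so that, at scale \(r_n\), it winds through \(n\) equally spaced "fingers". Alternatively, I would use Bishop's quasiconformal folding, which produces class \(\B\) functions of finite order with prescribed tract geometry. The \(n\) fingers reaching out to radius about \(r_n\) give \(n\) competing near-maxima on the circle. We then tune one real parameter per finger (the finger lengths) so that all \(n\) maxima are exactly equal at some radius. With \(n\) fingers and \(n-1\) ratio parameters, exact equality should follow from a degree or intermediate-value argument (Brouwer in \(n-1\) dimensions) applied to the continuous dependence of the local maxima on the parameters.

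The key steps, in order, are as follows.
\begin{enumerate}
\item Build the one-parameter-family model at a single scale \(r_n\), with \(n\) symmetric fingers.
\item Prove that the quasiconformal correction changes \(\log|f|\) by \(o(\delta_n)\) near \(|z|=r_n\). This requires that the dilatation support have small logarithmic area, so the Teichmüller--Wittich--Belinskii estimate makes the straightening map asymptotically conformal with an explicit error.
\item Show that \(f\) has exactly \(n\) strict local maxima of \(|f|\) on circles near \(r_n\), one in each finger, and that the global maximum is among them.
\item Perturb the finger parameters (or precompose by a small rotation-free deformation) so that the \(n\) values coincide at a common radius, using continuous dependence and a fixed-point argument.
\item Check that the different scales interact negligibly, using the rapid growth of \(r_n\), and that the order stays bounded, since the fingers have bounded angular width in logarithmic coordinates and Ahlfors-type distortion bounds give \(\log M(r)=O(r^{C})\).
\end{enumerate}

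I expect the main obstacle to be step 4 combined with step 2. The equality must be exact, not approximate, while the quasiconformal error depends on all parameters at once, including those at other scales. The first thing I would try is an inductive scheme. At stage \(n\), we fix the earlier scales and adjust only the \(n\) parameters at scale \(r_n\), using a continuity/degree argument on the simplex of parameters. The decisive point is to show that the map from parameters to the vector of differences of local maxima has nonzero degree, because the diagonal perturbation dominates the error. We would also ensure that later stages perturb earlier equalities only slightly. Each later stage therefore needs a re-adjustment, handled by a nested compactness (Cantor diagonal) argument over all scales simultaneously. Equivalently, we could set up one infinite-dimensional fixed-point problem on a product of small intervals and apply Schauder/Tychonoff.
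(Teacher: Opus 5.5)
There is a genuine gap, in two places.

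\textbf{The reduction in your first paragraph fails.} Points of $\M(f)$ at the distinct radii $r_n(1+j\varepsilon_n)$ do lie on maximum curves, but these curves need not persist over a range of radii. When the global maximum on $|z|=r$ jumps from one local maximum to another, the old maximum curve simply ends. So a function realising all your prescribed points may have its maximum hopping from one argument to the next, with $v_f=2$ at each jump radius and $v_f=1$ otherwise. Asking the $n$ curves to cross a common circle is the same as asking $n$ local maxima to be simultaneously global there, which is the whole problem. Hence no interpolation result like Theorem~\ref{th:2} can give Theorem~\ref{th:1}.

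\textbf{Your second route lacks the mechanism for the step you call decisive.} The framework matches the paper's proof: a logarithmic tract with competing channels, one parameter per channel, approximation in class $\B$, a Poincar\'e--Miranda/degree argument, and compactness across scales. But all the content lies in the face conditions, and you give no way to obtain them. Channels crossing a vertical line do not compete naturally. If one slice separates $-1$ from another and the two are far apart along the tract, Ahlfors' distortion theorem makes the farther one dominate. A dead-end finger pointing towards $+\infty$ has $\Rea G$ decaying exponentially towards its tip, so it loses to the through-going channel. Varying finger lengths gives no evident way to push each finger's maximum both above and below a common reference, uniformly in all other parameters including those at other scales. That uniformity is exactly what the face conditions require.

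The paper obtains it geometrically:
\begin{enumerate}
\item The through-going passage is choked by a tunnel of small height $t_n$ just to the right of $\Rea z=x_n$. Ahlfors' theorem then gives $\sup_{\gamma_n}\Rea G\le (c_n/2)G(x_n')$ for a reference point $x_n'$ beyond the tunnel.
\item The competitors are chambers beside the passage, opening forwards through gates past the tunnel. A fully open gate puts the chamber centre at bounded hyperbolic distance from $x_n'$, so $\Rea G\ge c_nG(x_n')$ there.
\item An almost closed gate gives $\Rea G\le 1$ on the chamber slice, by a Poisson-kernel estimate.
\item The passage slice always has $\sup\Rea G\ge 4$, since it separates $-1$ from $\infty$.
\end{enumerate}
These bounds hold uniformly in all other parameters, so Poincar\'e--Miranda and Tychonoff apply, with maxima taken over fixed compact subsegments for continuity.

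Two further points. The choking costs hyperbolic distance of order $1/t_n$. Finite order therefore forces the blockages to be placed adaptively, with $R_n\gtrsim\sum_{k\le n}1/t_k$, not at a predetermined geometric rate. Your step 2 is an artefact of quasiconformal folding. With Rempe's Cauchy-integral approximation one has $f=g+h$ with $|h(z)|\le M/|z|$, and $f$ depends continuously on the parameters; folding does not readily provide either.
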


The proof of Theorem~\ref{th:1} uses logarithmic coordinates. \textit{Lifting} by the exponential map, circles become vertical lines, and maximum modulus points become points where the associated logarithmic transform is maximal on a vertical line. We construct a conformal model whose tract has, at selected real parts, many competing channels. The geometry is arranged so that many channels realise the maximum simultaneously. We then approximate the model by an entire function in class \(\B\), using the approximation theorem of Rempe \cite{lasse_model14}. The main difficulty is to keep the tract sufficiently regular to obtain finite order.

The same method also gives a different kind of flexibility. Instead of asking how many maximum modulus points can lie on one circle, one may ask how freely the position of maximum modulus points can be prescribed. Our second result says that, within the class \(\B\), essentially any escaping sequence of points on distinct circles can be forced to lie in the maximum modulus set.

\begin{theorem}
	\label{th:2}
	Suppose that \((z_n)_{n\in\N}\) is a sequence of points in the plane, tending to infinity, and such that no two points have the same modulus. There is an entire function \(f\in\B\) such that \(z_n\in\M(f)\), for all \(n\in\N\). Moreover, if there is a constant \(K>1\) such that \(|z_{n+1}|\geq K|z_n|\), for all \(n\in\N\), then \(f\) can be taken to have finite order.
\end{theorem}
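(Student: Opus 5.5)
We will prove Theorem~\ref{th:2} in logarithmic coordinates, by the same model-and-approximation method as Theorem~\ref{th:1}. Write \(z_n=e^{t_n+i\theta_n}\) with \(t_n\) strictly increasing to \(+\infty\); after passing to a reordering this is possible, since the moduli are pairwise distinct and tend to infinity. The goal is a \(2\pi i\)-periodic tract \(T\) together with a conformal isomorphism \(F\colon T\to\HH\defeq\{\Rea w>0\}\), where \(\exp\circ F=f_0\circ\exp\) on \(T\). We want the point \(w_n\defeq t_n+i\theta_n\) to lie in \(T\), and we want \(\Rea F(w_n)\) to be the unique maximum of \(\Rea F\) on the vertical line \(\Rea w=t_n\) by a definite margin. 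We then take \(T\) to be a single channel that wanders vertically, like a snake. On the segment \(\Rea w\in[t_n-\delta_n,t_n+\delta_n]\) it is a straight horizontal strip of height \(\pi\) centred at height \(\theta_n\). Between consecutive segments it moves from \(\theta_n\) to \(\theta_{n+1}\) along a smooth path of bounded slope and unit width, so each vertical line meets \(T\) modulo \(2\pi i\) in one interval. Symmetry suggests that on a straight symmetric piece of strip, \(\Rea F\) restricted to a vertical line is maximised at the centre line. Since the strip is only approximately symmetric, we will add a small perturbation argument.

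The key steps are as follows. First, build \(T\) as above, choosing \(\delta_n\) large compared to the vertical displacement \(|\theta_{n+1}-\theta_n|\le 2\pi\). Second, compare \(F\) on the straight piece with the conformal map of the infinite straight strip. Using Ahlfors distortion or a harmonic-measure comparison, the effect of the bends at distance \(\gtrsim\delta_n\) decays like \(e^{-c\delta_n}\). Consequently \(\Rea F\) on the line \(\Rea w=t_n\) has the shape \(\lambda_n\cos(\Ima w-\theta_n)\), up to an error \(e^{-c\delta_n}\) times the main term, in the derivative as well as the value. Third, conclude that the maximum on that line is attained at a single point \(w_n'\) with \(|w_n'-w_n|\) tiny. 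To make it exactly \(w_n\), we can absorb this offset by shifting the \(n\)th straight piece vertically by a small amount, chosen inductively, or by a fixed-point or continuity argument in the finitely many relevant parameters. Fourth, apply Rempe's approximation theorem \cite{lasse_model14}. It gives \(f\in\B\) with \(\log|f(e^{w})|=\Rea F(w)+O(1)\) and, more precisely, a quasiconformal conjugacy close to the identity, so that \(\log|f\circ\exp|\) differs from \(\Rea F\) by an error that is small in the \(C^1\) sense on the tract. The maximum-modulus point of \(f\) on \(|z|=e^{t_n}\) is then within a small distance of \(w_n\) and not exactly at it, so exactness must also be recovered at this stage. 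We plan to handle this by precomposing \(f\) with nothing, and instead running the perturbation of the third step after approximation. The parameters are continuous, and a degree or intermediate-value argument applied separately on each line works because the lines are separated by the gaps \(t_{n+1}-t_n\).

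For finite order, assume \(t_{n+1}-t_n\ge\log K>0\). Then the segment lengths and transitions have widths and slopes bounded uniformly, so \(T\) is a uniformly regular tract, a bounded-slope channel of width comparable to \(1\). Standard estimates then give \(\Rea F(w)\le e^{C\Rea w}\), and hence \(f\) has finite order. In this regime \(\delta_n\) is bounded, so the error terms are bounded rather than decaying, and the argument has to work at a fixed scale. If the \(t_n\) accumulate without separation, we allow \(T\) to be very thin or to have steep transitions between nearby radii, which costs finite order but nothing else.

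The main obstacle is the exactness: we need \(z_n\in\M(f)\) on the nose, not merely near it, for the final function after Rempe approximation, and this must hold for all \(n\) simultaneously. Our first attempt will be to make the construction depend on a sequence of vertical-shift parameters \(s_n\in[-\epsilon_n,\epsilon_n]\) and to show that the map sending these parameters to the positions of the maxima is continuous, with the \(n\)th coordinate essentially controlled by \(s_n\). We will then conclude with a Schauder or Tychonoff-type fixed point argument on the product of intervals. Alternatively, we can get the position exactly right via a symmetry reflection through the line \(\Ima w=\theta_n\) locally, combined with a precomposition by a small affine map \(z\mapsto e^{i\alpha_n}z\), which is unavailable globally. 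For this reason we expect the fixed-point route to be the one used.
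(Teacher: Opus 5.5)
Your architecture matches the paper's: one real parameter per prescribed point, continuous dependence of \(f\) on the parameters via the kernel theorem and Rempe's construction, and a Poincar\'e--Miranda/Tychonoff argument on the infinite product. However, the two inputs that make that argument run are not supplied.

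\emph{The sign change.} For each \(n\) you need a sign change on opposite faces of the \(n\)th parameter interval, uniformly in all the other parameters. You get it from a centre-line estimate on a straight piece of length \(2\delta_n\), with error \(e^{-c\delta_n}\). That requires \(\delta_n\) to be large compared with the channel width. But \(\delta_n\) is at most half the gap to the neighbouring radii, and these gaps can be arbitrarily small. This happens in the general statement, and also under \(|z_{n+1}|\ge K|z_n|\) when \(K\) is close to \(1\). There you yourself concede that the error is only bounded and that ``the argument has to work at a fixed scale''. Once the error is comparable to the shift range, shifting the \(n\)th piece by \(\pm\epsilon_n\) no longer forces the maximiser across \(\theta_n\). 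An intermediate value argument ``separately on each line'' is not available either, since every parameter changes the Riemann map, and hence \(f\), globally.

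The missing idea is to make the sign conditions free. The paper slits a fixed tube \(U\) along \(\Rea z=x_n\), leaving only a gate \(I_n(\uldelta)\) of length \(\eta_n\), displaced by \(\delta_n\in[0,3\eta_n]\). By Proposition~\ref{prop_MaxInV}, \(|h|<e^{-1}\) off the tract while \(\Rea G\ge 4\) somewhere on every crosscut, so the maximum on the line lies in the gate. Hence it lies below \(w_n\) when \(\delta_n=0\) and above \(w_n\) when \(\delta_n=3\eta_n\), whatever the other parameters are and wherever in the gate it sits. In the finite-order case a constant \(\eta_0\ll\log K\) works uniformly and preserves the hypotheses of Lemma~\ref{lem:finite-order-criterion}. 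Your remark about thin channels points this way, but you use it only when giving up finite order, and you still rely on the centre-line estimate.

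\emph{Uniqueness of the maximiser.} ``The position of the maximum'' is a well-defined continuous function of the parameters only if the maximiser is unique. Likewise, a zero of \(\phi_n\) puts \(w_n\) in \(\Mlog(f)\) only then: two maximisers straddling \(\theta_n\) give nothing. Closeness to \(\lambda_n\cos(\Ima w-\theta_n)\) in value and first derivative does not rule out several critical points near the top. You need \(C^2\) control there and strict domination near the ends of the slice, and both must survive the passage from \(g\) to \(f=g+h\). This is the content of Lemma~\ref{lem:narrow_gate_unique}. After rescaling by \(\eta_n\), the tract converges to the two-sided slit plane. The normalised profile of \(\Rea G\) on the gate then converges in \(C^2\) to \(2\sqrt{1/4-s^2}\). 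Finally, \(\psi(b_{n,\uldelta})\to\infty\) makes \(h\) a \(C^2\)-small perturbation of \(\log|f|\).

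\emph{Smaller corrections.} Theorem~\ref{thm_approx1.7} requires the target \(H\) of \eqref{eq_H}, not \(\HH\). What it provides is the additive bound \(|h(z)|\le M/|z|\), not a quasiconformal conjugacy close to the identity. A point \(z_n=0\) must be discarded before taking logarithms; it lies in \(\M(f)\) anyway.
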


\begin{remark}\normalfont
	The escape condition in Theorem~\ref{th:2} is natural. Indeed, near the origin, the maximum modulus set of a non-monomial entire function consists of finitely many analytic arcs meeting with equal angles; see \cite{Hayman, leti_dave_vasso}. In a forthcoming paper \cite{junctionpoints}, we show that an analogous local description holds near any point of the plane. Hence arbitrary finite accumulation of prescribed maximum modulus points cannot be expected. For example, for any \(c\in\C\), the sequence \(z_n\defeq c+2^{-n}e^{in}\), \(n\in\N\), cannot be contained in the maximum modulus set of a non-monomial entire function.
\end{remark}
%
%
\subsection*{Notation}
For a set $S \subset \C$ and $a \in \C$ we define
$S+a \defeq \{ z + a : z \in S \}.$ If $z,w$ are two complex numbers, we use $[z,w]$ to denote the straight line segment from $z$ to $w$. If $V$ is a domain and $z, w \in V$, then we use $d_V(z, w)$ to denote the hyperbolic distance in $V$ between $z$ and $w$. We let $\mathbb{H}$ denote the right half-plane. 

%
%


\section{Background to the constructions}

In this section we outline the general framework used in the proofs of Theorems~\ref{th:1} and~\ref{th:2}. The starting point is the ``model/approximation'' approach developed by Rempe \cite{lasse_model14}, in which one first constructs a suitable conformal model in logarithmic coordinates and then approximates it by an entire function in the Eremenko--Lyubich class. 

\begin{definition}[{\cite[Definition 1.6]{lasse_model14}}]\label{def:model}\normalfont
	A \emph{model} is a triplet $(G, V, H)$ with the following properties.
	\begin{enumerate}[(i)]
		\item $H \subset \C$ is a simply-connected domain that contains $\mathbb{H}$.
		\item $V \subset \C$ is a simply-connected domain, disjoint from its $2\pi i$ translates.
		\item $G \colon V \rightarrow H$ is a conformal isomorphism, such that if $\{z_n\}_{n\in\N}\subset H$ is a sequence with $z_n \rightarrow \infty$ in $H$, then $\Rea G^{-1}(z_n)\rightarrow +\infty$.
	\end{enumerate}
\end{definition}

\begin{remark}\normalfont
	The equivalent definition in \cite{lasse_model14} defines a model function using a conformal map $\Psi \colon \exp(V) \to H$, such that $G = \Psi \circ \exp$. Our definition, and the theorem below, are equivalent, and slightly easier to work with in our setting.
\end{remark}

Following \cite{lasse_model14}, we make the following specific choice of $H$ in both of our constructions; this domain is fixed throughout the paper.
\begin{equation}
	\label{eq_H}
	H \defeq \{ x+iy \colon x> -14\log_+\vert y\vert \},
\end{equation}
where $\log_+(t)\defeq \max(0, \log t)$. For simplicity in calculations, we will also assume that $-1\in V$.

Since $V$ is disjoint from its non-zero $2 \pi i$ translates, the exponential map is injective on $V$, and hence there is a single-valued branch of the logarithm on $\exp(V)$ taking values in $V$. Using this logarithm, we can then define an analytic function $g \colon \exp(V) \to \exp(H)$ with the property that
\begin{equation}
	\label{eq:gG}
	g \circ \exp = \exp \circ G.
\end{equation} 

The following result, which is a merged version of \cite[Theorem 1.7]{lasse_model14} and \cite[Corollary 4.5]{lasse_model14}, and is based on the use of Cauchy integrals, allows us to approximate the function $g$ by an entire function $f \in \B$.
\begin{theorem}
	\label{thm_approx1.7}
	Let $(G, V, H)$ be a model, with $H$ fixed in \eqref{eq_H}, $-1\in V$, and assume that $G(-1)=1$.
	Let $g$ be as in \eqref{eq:gG}.
	Then there exists $f\in \mathcal{B}$ such that
	\begin{equation}
		\label{eq_f}
		f(z) \defeq 
		\begin{cases}
			g(z)+h(z), &\text{for } z\in \exp(V), \\
			h(z),   &\text{otherwise.}
		\end{cases}
	\end{equation}
	Here $h\colon \C \rightarrow \C$ is such that, for some constant $M>0$ universal under this normalization, $\vert h(z)\vert \leq M/\vert z \vert $ for $|z| > 1$.

Moreover, suppose that \(T\defeq \exp(V)\) is symmetric with respect to the real axis and that, writing \(\Psi\colon T\to H\) for the map satisfying
	\(G=\Psi\circ\exp\), we have \(\Psi(T\cap\R)\subset\R\). Then \(f\) may be
	chosen so that \(f(\R)\subset\R\). Equivalently, since \(f\) is entire,
	\(f(\bar z)=\overline{f(z)}\) for all \(z\in\C\).
\end{theorem}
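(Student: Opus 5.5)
The plan is to deduce the statement directly from \cite[Theorem 1.7]{lasse_model14} and \cite[Corollary 4.5]{lasse_model14}, after translating between the two formulations of a model. Given $(G,V,H)$ as in Definition~\ref{def:model}, set $T\defeq\exp(V)$ and $\Psi\defeq G\circ\log\colon T\to H$, where $\log$ is the branch with values in $V$. This branch is single-valued because $V$ is disjoint from its $2\pi i$ translates. The first step is to check that $\Psi$ is a conformal isomorphism onto $H$, and that the condition in (iii) becomes Rempe's requirement that $|\Psi^{-1}(w)|\to\infty$ as $w\to\infty$ in $H$. Since $\exp(H)$ is then the image of $g=\exp\circ\Psi$, the identity \eqref{eq:gG} holds. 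The normalisation $-1\in V$, $G(-1)=1$ becomes $e^{-1}\in T$, $\Psi(e^{-1})=1$. This is exactly the normalisation under which Rempe's constant is universal for the fixed domain $H$ of \eqref{eq_H}.

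Rempe's Theorem~1.7 then produces an entire $f$. It is defined by a Cauchy integral of $g$ along $\partial T$, suitably truncated, and it equals $g+h$ on $T$ and $h$ off $T$, with $|h(z)|=O(1/|z|)$. Corollary~4.5 gives that $f\in\B$ and that the implicit constant is universal. So the first part reduces to quoting these two results and recording that the error bound $|h(z)|\le M/|z|$ holds for $|z|>1$. If Rempe states the bound only for large $|z|$, I will extend it to $|z|>1$ by enlarging $M$, using that both $f$ and $g$ are controlled on compact pieces under the normalisation.

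For the symmetric addendum, suppose $T=\overline{T}$ and $\Psi(T\cap\R)\subset\R$. Then $\overline{\Psi(\bar z)}$ is another conformal map $T\to H$, because $H$ is symmetric about $\R$. It agrees with $\Psi$ on the real interval $T\cap\R$, so by the identity principle $\Psi(\bar z)=\overline{\Psi(z)}$, and therefore $g(\bar z)=\overline{g(z)}$.

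I would first show that Rempe's Cauchy-integral formula inherits this symmetry. Under $\zeta\mapsto\bar\zeta$ the boundary contour is mapped to itself with reversed orientation, and the conjugation of $d\zeta/(\zeta-z)$ compensates for the reversal, so $f(\bar z)=\overline{f(z)}$. The one point to check is that the truncation used in the construction (the contour near the "base" of the tract) can itself be chosen symmetric, which is possible because $T$ and $g$ are.

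As a fallback, replace $f$ by $\tilde f(z)\defeq\tfrac12\bigl(f(z)+\overline{f(\bar z)}\bigr)$. Since $g$ is symmetric, $\tilde f=g+\tilde h$ on $T$ and $\tilde f=\tilde h$ off $T$, where $\tilde h(z)=\tfrac12\bigl(h(z)+\overline{h(\bar z)}\bigr)$ still satisfies $|\tilde h(z)|\le M/|z|$. I then need $\tilde f\in\B$. I expect this to be the main obstacle, since class $\B$ is not closed under averaging in general. The way around it is that Rempe's proof of Corollary~4.5 uses only the structural decomposition $f=g+h$ together with the bound on $h$ (and on $h'$ via Cauchy estimates). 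These inputs hold equally for $\tilde f$, so the same argument bounds its singular values.
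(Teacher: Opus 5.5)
Your proposal follows the paper, which gives no separate argument: the first part is quoted from Rempe's Theorem~1.7 and Corollary~4.5, and the real-symmetric version is meant to come from the symmetry of his Cauchy-integral construction, exactly as in your primary argument. You can, however, drop both the contour check and the averaging fallback, whose ``main obstacle'' does not arise: once $g(\bar z)=\overline{g(z)}$ and $T$ is symmetric, \eqref{eq_f} shows that the entire function $f(z)-\overline{f(\bar z)}$ equals $h(z)-\overline{h(\bar z)}$, so it is bounded by $2M/|z|$ for $|z|>1$ and therefore vanishes by Liouville's theorem; thus every $f$ as in the theorem is automatically real-symmetric, and your averaged $\tilde f$ is just $f$.
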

For an entire function $f$, define
\[
\Mlog(f) \defeq \{ z \in \C \colon |f(e^z)| = \max_{\substack{ w \in \C \\ \Rea w = \Rea z}} |f(e^w)| \}.
\] 
Note that by $2\pi i$-periodicity of the exponential map, 
\begin{equation}\label{eq_max_translates}
	z\in \Mlog(f) \iff z + 2 k\pi i \in \Mlog(f) \text{ for } k\in \Z,
\end{equation}
and since the exponential map sends vertical lines to circles, we have $$\M(f)=\exp(\Mlog(f))\cup\{0\}.$$

In order to obtain an entire function $f$ such that $\M(f)$ has the exact properties we are looking for, we will define a family of models $(G_{_{\uldelta}}, V(\uldelta), H)$ that depend continuously on an infinite-dimensional parameter $\uldelta=(\delta_n)_{n\in\N}$ belonging to a space of the form 
\begin{equation} \label{eq_Delta}
	\Delta\defeq \prod_{n\in\N} [a_n,b_n], \qquad a_n,b_n \in \R,\quad a_n<b_n.
\end{equation}

Applying Theorem~\ref{thm_approx1.7} to each such model, we will obtain a family of entire maps~$f_{_{\uldelta}}$. 

For the families considered below, the phrase ``normalised conformal isomorphism'' means that \(G_{_{\uldelta}}(-1)=1\) and that \(G_{_{\uldelta}}\) sends the right-hand end of \(V(\uldelta)\) to the end of \(H\) at infinity; equivalently, \(G_{_{\uldelta}}(z)\to\infty\) as \(z\in V(\uldelta)\) tends to the end \(\Rea z\to+\infty\).
\begin{observation}\label{obs_continuous_dependence} \normalfont
Suppose that the domains $V(\uldelta)$, $\uldelta\in\Delta$, depend continuously on $\uldelta$ in the Carath\'eodory kernel topology, where $\Delta$ is endowed with the product topology, and let $G_{_{\uldelta}}\colon V(\uldelta)\to H$ denote the corresponding normalised conformal isomorphisms. Then, by the Carath\'eodory kernel theorem \cite[Theorem 1.8]{Pommerenke}, the functions $G_{_{\uldelta}}$ depend continuously on $\uldelta$ in the topology of locally uniform convergence, and in turn, so do the functions $g_{_{\uldelta}}$ from \eqref{eq:gG}. Moreover, it follows from the construction of the functions $f_{_{\uldelta}}$ in \cite{lasse_model14} that $f_{_{\uldelta}}$ depends continuously on $\uldelta$ in the topology of locally uniform convergence; see \cite[Theorem 2.1]{lasse_model14} and the proof of \cite[Corollary 4.5]{lasse_model14}. The same construction also gives locally uniform continuous dependence when the
models satisfy the symmetry hypotheses in Theorem~\ref{thm_approx1.7} and the
approximating functions are chosen real-symmetric.
\end{observation}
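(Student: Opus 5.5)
The approach is to reduce the statement to the classical kernel theorem for normalised maps of the disc, and then to pass continuity through the two explicit operations that produce $g_{_{\uldelta}}$ and $f_{_{\uldelta}}$. Since $\Delta$ is a product of compact intervals with the product topology, it is metrisable, so sequential continuity suffices. Fix $\uldelta^{(k)}\to\uldelta$ in $\Delta$. By hypothesis, $V(\uldelta^{(k)})\to V(\uldelta)$ in the kernel sense with respect to the base point $-1$.

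\emph{Step 1: the maps $G_{_{\uldelta}}$.} Fix once and for all a conformal map $\varphi\colon \mathbb{D}\to H$ with $\varphi(0)=1$ and with the radial limit at $1$ equal to the end of $H$ at infinity. Then $F_k\defeq G^{-1}_{_{\uldelta^{(k)}}}\circ\varphi$ is a conformal map $\mathbb{D}\to V(\uldelta^{(k)})$ with $F_k(0)=-1$. The normalisation is not $F_k'(0)>0$ but the prime-end condition that $1$ corresponds to the right-hand end. Write $F_k=\widetilde F_k(e^{i\theta_k}\,\cdot)$, where $\widetilde F_k$ is the standard normalisation with $\widetilde F_k'(0)>0$. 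By \cite[Theorem 1.8]{Pommerenke}, $\widetilde F_k\to\widetilde F$ locally uniformly. It therefore remains to show that $\theta_k\to\theta$, where $\theta$ is the rotation for the limit domain.

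\emph{Step 2: the rotation angles.} By compactness it suffices to show that every limit point $\theta'$ of $(\theta_k)$ equals $\theta$. I expect this to be the main obstacle, because prime ends are not in general stable under kernel convergence. The argument I would try first uses the radius $[0,1)$ in $\mathbb{D}$. Its images under $\widetilde F(e^{i\theta'}\cdot)$ and $\widetilde F(e^{i\theta}\cdot)$ are both curves from $-1$ in $V(\uldelta)$. Suppose $\theta'\neq\theta$. Then the first curve lands at a prime end of $V(\uldelta)$ other than the right-hand end, so along it $\Rea$ stays bounded (or it lands at a finite boundary point). Now use the model condition (iii) of Definition~\ref{def:model}: the images $F_k([0,1))$ do have $\Rea\to+\infty$. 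Combine this with a uniform estimate, which must come from the concrete families constructed later. In those families the domains share a fixed right-half-strip-like end and vary only on compact pieces for each finite set of coordinates. This should give a hyperbolic crosscut argument: a crosscut $C$ of $V(\uldelta)$ far to the right that separates the right end from $-1$ is also a crosscut of $V(\uldelta^{(k)})$ for large $k$. Its harmonic measure seen from $-1$ tends to $0$ uniformly in $k$ as $C$ moves right. Since $F_k([0,1))$ must cross $C$, this forces $\theta'=\theta$. With Step~2 done, $G^{-1}_{_{\uldelta^{(k)}}}\to G^{-1}_{_{\uldelta}}$ locally uniformly on $H$. Hence $G_{_{\uldelta^{(k)}}}\to G_{_{\uldelta}}$ locally uniformly on $V(\uldelta)$, by the standard equivalence of inverse convergence in the kernel theorem.

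\emph{Step 3: the maps $g$ and $f$.} On a compact set $K\subset \exp(V(\uldelta))$, the branch of $\log$ into $V(\uldelta)$ lifts $K$ to a compact subset of $V(\uldelta)$, which lies in $V(\uldelta^{(k)})$ for large $k$ by kernel convergence. Thus $g_{_{\uldelta^{(k)}}}=\exp\circ G_{_{\uldelta^{(k)}}}\circ\log\to g_{_{\uldelta}}$ uniformly on $K$. For $f$, recall that in \cite{lasse_model14} (proof of \cite[Theorem 2.1]{lasse_model14} and \cite[Corollary 4.5]{lasse_model14}) $f$ is defined by a Cauchy integral of $g$ over a fixed-type contour $\partial\Psi^{-1}(\{\Rea w>c\})$, together with the explicit correction $h$. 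The integrand converges pointwise, since the contours converge by Step~1. It is dominated uniformly in $k$, because on the contour $|g|=e^{\Rea w}$ is controlled by the fixed geometry of $H$ from \eqref{eq_H}, and the $1/|\zeta|$ decay gives integrability. Dominated convergence then yields $f_{_{\uldelta^{(k)}}}\to f_{_{\uldelta}}$ locally uniformly on $\C$.

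In the real-symmetric case the same argument goes through unchanged. The symmetric normalisation ($\Psi(T\cap\R)\subset\R$) is preserved in the limit, and the symmetrisation $\tfrac12\bigl(f(z)+\overline{f(\bar z)}\bigr)$ used to make the approximant real is a continuous operation under locally uniform convergence.
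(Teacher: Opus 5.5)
Your outline is the paper's own: the paper simply invokes the kernel theorem for $G_{\uldelta}$ and Rempe's construction for $f_{\uldelta}$. You are also right that, because the normalisation is by a prime end, the kernel theorem alone gives convergence only up to a rotation; the paper's one-line appeal passes over this. However, two of the steps you supply do not go through, and Step~3 fails outright.

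\emph{Step~2.} As written, this step does not close. Suppose the harmonic measure at $-1$ of the boundary beyond $C$ is at most $\varepsilon$. Then you only learn that $\widetilde F_k^{-1}(C)$ cuts off an arc $J_k\ni e^{i\theta_k}$ of length at most $2\pi\varepsilon$. The crosscut $\widetilde F_k^{-1}(C)$ itself may still reach deep into $\mathbb{D}$. Moreover, locally uniform convergence on $[0,r]$, $r<1$, says nothing about where the radii end. So the fact that $F_k([0,1))$ crosses $C$ does not pin down $\theta'$. What is missing is a link between $J_k$ and the corresponding arc $J\ni e^{i\theta}$ of the limit tract. One way to supply it: take $C$ to be a vertical crosscut common to all the tracts, such as a full slice of the tube away from the gates and blockages. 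Near its endpoints, all the tracts then share the same straight boundary. Schwarz reflection across that boundary extends the maps $\widetilde F_k^{-1}$ to a normal family on a neighbourhood of each endpoint, so the endpoints of $J_k$ converge to those of $J$. All these arcs have length at most $2\pi\varepsilon<\pi$, so $|e^{i\theta'}-e^{i\theta}|\le 4\pi\varepsilon$, and you conclude by moving $C$ to the right. For the real-symmetric tracts of Section~\ref{S.th1} the step is vacuous: the symmetric map with positive derivative at $0$ sends $(-1,1)$ onto $V\cap\R=(-2,\infty)$, so the right-hand end always corresponds to $1$.

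\emph{Step~3.} This is the more serious gap. Along the unbounded part of $\partial\Psi^{-1}(\{\Rea w>c\})$ one has $|g|\equiv e^{c}$. Hence the Cauchy integral of $g$ over this contour is not absolutely convergent, since $\int|d\zeta|/|\zeta|=\infty$ along any curve tending to $\infty$, and no dominating function exists. In Theorem~\ref{thm_approx1.7} the $g$-term appears exactly on $\exp(V)$, which reflects that Rempe's Cauchy integral is taken over $\partial\exp(V)$ itself. Integrability there comes from the shape of $H$: $|e^w|$ decays like $|\Ima w|^{-14}$ on $\partial H$.

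For that contour, Step~1 gives you nothing. It yields locally uniform convergence on compact subsets of the tract. It does not control the moving boundaries $\partial\exp(V(\uldelta))$, whose gates move and change size with $\uldelta$. Nor does it control the boundary values of $g_{\uldelta}$ and of $d\zeta$ along them, or give a bound on the integrand uniform in $\uldelta$. So neither the pointwise convergence nor the domination you invoke is justified. What is needed is the quantitative tail estimate from Rempe's proof, uniform over all models with this $H$; this is what makes $M$ universal. It must be combined with a separate convergence argument on a bounded part of the contour that uses more than kernel convergence, for example the explicit way the boundaries move in the families of Sections~\ref{S.th2} and~\ref{S.th1}.
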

\begin{remark}[Carath\'eodory kernel convergence]\normalfont
We use Carath\'eodory kernel convergence in the following standard sense. If \(D_j\) are domains containing a fixed base point \(a\), the kernel of the sequence is the largest domain \(D\) containing \(a\) such that every compact subset of \(D\) is contained in \(D_j\) for all sufficiently large \(j\). We say that \(D_j\to D\) in the kernel sense if \(D\) is the kernel of every subsequence. Thus, in the applications below, after rescaling, the limiting domain is obtained by retaining exactly those boundary pieces that remain visible on compact sets. The Carath\'eodory kernel theorem says that, with the usual normalisation, the corresponding conformal maps converge locally uniformly, with convergence of derivatives.
\end{remark}
In order to show that there is a suitable choice of the variable $\uldelta$ that gives the result we are looking for, we will use the following auxiliary lemma.
\begin{lemma}\label{lem:shooting}
Let $\Delta$ be of the form \eqref{eq_Delta}, and write $\uldelta=(\delta_n)_{n\in\N}$. For each $n\in \N$, let $\phi_n\colon \Delta\to \R$ be continuous. Suppose that, for each \(n\in\N\), there is a sign $\sigma_n\in\{-1,1\}$ such that, for all $\uldelta\in\Delta$,
	\[\sigma_n\cdot\phi_n(\uldelta)\geq0\quad\text{when }\delta_n=a_n,\qquad
		\sigma_n\cdot\phi_n(\uldelta)\leq0\quad\text{when }\delta_n=b_n.
	\]
Then, for any $A\subseteq \N$, there exists $\uldelta \in \Delta$ such that $\phi_n(\uldelta)=0$ for all $n\in A$.
\end{lemma}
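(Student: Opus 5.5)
The idea is to reduce to finitely many coordinates, apply the Poincaré--Miranda theorem there, and pass to the limit using compactness of \(\Delta\).

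\emph{Reduction to finite \(A\).} Let \(A\subseteq\N\) be arbitrary and enumerate it (if it is infinite) as \(A=\{n_1<n_2<\cdots\}\). For \(k\in\N\) put \(A_k\defeq\{n_1,\dots,n_k\}\); if \(A\) is finite, simply take \(A_k=A\). By Tychonoff's theorem \(\Delta\) is compact in the product topology, and it is metrisable because it is a countable product of intervals. For each \(k\) set
\[
Z_k\defeq\{\uldelta\in\Delta\colon \phi_n(\uldelta)=0 \text{ for all } n\in A_k\}.
\]
Each \(Z_k\) is closed, since the \(\phi_n\) are continuous, and the sets are nested, \(Z_{k+1}\subseteq Z_k\). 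So once every \(Z_k\) is shown to be nonempty, the finite intersection property gives \(\bigcap_k Z_k\neq\emptyset\), and any point of this intersection is the required \(\uldelta\). (Equivalently, pick \(\uldelta^{(k)}\in Z_k\) and take a convergent subsequence.)

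\emph{Finite case.} Fix a finite set \(F=\{m_1,\dots,m_k\}\subseteq\N\). Choose any fixed values \(\delta_n\in[a_n,b_n]\) for \(n\notin F\); for instance \(\delta_n=a_n\). With these frozen, define
\[
\Phi\colon Q\defeq\prod_{j=1}^k[a_{m_j},b_{m_j}]\to\R^k,\qquad \Phi_j(t_1,\dots,t_k)\defeq\sigma_{m_j}\,\phi_{m_j}(\uldelta(t)),
\]
where \(\uldelta(t)\) is the sequence with \(\delta_{m_j}=t_j\) and the frozen values elsewhere. The map \(t\mapsto\uldelta(t)\) is continuous into \(\Delta\), so \(\Phi\) is continuous. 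By hypothesis, \(\Phi_j\ge0\) on the face \(t_j=a_{m_j}\) and \(\Phi_j\le0\) on the face \(t_j=b_{m_j}\). After the affine change of variables reversing each interval, this is exactly the sign configuration in the Poincaré--Miranda theorem, the \(k\)-dimensional version of the intermediate value theorem. That theorem yields \(t\in Q\) with \(\Phi(t)=0\). Since \(\sigma_{m_j}\neq0\), we get \(\phi_{m_j}(\uldelta(t))=0\) for all \(j\), so \(\uldelta(t)\in Z_k\).

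\emph{Main point.} The only substantive input is the Poincaré--Miranda theorem. It is standard and equivalent to Brouwer's fixed point theorem, so I would cite it rather than reprove it. If a self-contained argument were wanted, I would apply Brouwer to the map \(t\mapsto \pi_Q\bigl(t+\varepsilon\,\Phi(t)\bigr)\), where \(\pi_Q\) is coordinatewise clamping to \(Q\). At a fixed point, the sign conditions force \(\Phi_j(t)=0\) both when \(t_j\) is interior and when \(t_j\) lies on a face. The only other care needed is that the hypotheses hold for all \(\uldelta\), so freezing the coordinates outside \(F\) is harmless.
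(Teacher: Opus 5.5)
Your proof is correct and follows the paper's argument essentially verbatim. The paper likewise freezes the coordinates outside a finite set at $\delta_n=a_n$, applies Poincar\'e--Miranda, and handles infinite $A$ by compactness of $\Delta$, extracting a convergent subsequence where you use nested closed sets; the two are equivalent. Your optional Brouwer argument with the clamped map $t\mapsto\pi_Q(t+\varepsilon\Phi(t))$ is also correctly oriented for these sign conditions.
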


\begin{remark}\normalfont
		This shooting lemma is inspired by related arguments in
		\cite[Lemma~4.6]{letidave_variations} and
		\cite[Proof of Theorem~7.4]{lasse_model14}. 
		The proof given here is shorter: the finite-dimensional case is
		a direct application of the Poincar\'e--Miranda theorem, a
		multidimensional generalisation of the intermediate value theorem,
		and the general case follows by a compactness argument.
\end{remark}

\begin{proof}[Proof of Lemma \ref{lem:shooting}]
If $A=\emptyset$, there is nothing to prove. First suppose that $A$ is finite, say $A=\{\alpha_1,\ldots,\alpha_m\}$.
	Write $\Delta(A)\defeq \prod_{k=1}^m [a_{\alpha_k},b_{\alpha_k}]$ and, for
	$y=(y_1,\ldots,y_m)\in \Delta(A)$, define $\uldelta(y)\in\Delta$ by
	\[
	\delta_n \defeq
	\begin{cases}
		y_k, &\text{ if } n = \alpha_k \text{ for some } k,\\
		a_n, &\text{ otherwise.}
	\end{cases}
	\]
	Define a continuous map $F\colon \Delta(A)\to \R^m$ by
	\[
	F(y)\defeq \big(\sigma_{\alpha_1}\phi_{\alpha_1}(\uldelta(y)),\ldots,
		\sigma_{\alpha_m}\phi_{\alpha_m}(\uldelta(y))\big).
	\]
	For each $k\in\{1,\ldots,m\}$, on the face $\{y_k=a_{\alpha_k}\}$ we have $F_k(y)\geq0$, and on the opposite face $\{y_k=b_{\alpha_k}\}$ we have $F_k(y)\leq0$,
	by the hypotheses of the lemma. Hence $F$ satisfies the sign conditions of the
	Poincar\'e--Miranda theorem; therefore there exists $y^*\in\Delta(A)$ such that
	$F(y^*)=0$. Setting $\uldelta\defeq \uldelta(y^*)$ gives $\phi_n(\uldelta)=0$ for all
	$n\in A$.
	
	Now suppose that $A$ is infinite. Choose an increasing sequence of finite subsets
	$A_1\subset A_2\subset\cdots$ with $\bigcup_{k\ge1}A_k=A$. By the finite case, for each
	$k$ there exists $\uldelta^{(k)}\in\Delta$ such that $\phi_n(\uldelta^{(k)})=0$ for all
	$n\in A_k$. Since $\Delta$ is a countable product of compact intervals, it is compact and metrizable in the product topology, for instance by Tychonoff's theorem; hence
	there exists a subsequence $\uldelta^{(k_j)}$ converging to some $\uldelta\in\Delta$.
	Fix $n\in A$. Then $n\in A_{k_j}$ for all sufficiently large $j$, so
	$\phi_n(\uldelta^{(k_j)})=0$ for all large~$j$. By continuity of $\phi_n$ we obtain
	$\phi_n(\uldelta)=0$. Since $n\in A$ was arbitrary, this holds for all $n\in A$.
\end{proof}

Recall that for a hyperbolic domain $U \subset \C$, we denote by $\rho_U(z)$ the hyperbolic density at the point $z \in U$. If $U$ is simply-connected, then, by \cite[Theorems~8.2 and 8.6]{beardonandminda}, we have the following standard estimate:
\begin{equation}\label{eq:hypest}
	\frac{1}{2\operatorname{dist}(z, \partial U)} \leq \rho_U(z) \leq \frac{2}{\operatorname{dist}(z, \partial U)}, \qfor z \in U,
\end{equation}
where $\dist(z, \partial U)$ is the Euclidean distance between $z$ and $\partial U$.

Finally, in order to prove that the functions we construct are of finite order, we will use the following results. The next theorem is a version of Ahlfors' distortion theorem; see, for example, \cite[Corollary to Theorem 4.8]{ahlfors}. 

Here, and below, a maximal vertical line segment in a domain \(V\) at real part \(t\) means a connected component of \(V\cap\{z:\Rea z=t\}\).

\begin{theorem}\label{thm_ahlfors}
	Let $V \subset \mathbb{C}$ be a simply connected domain, and
	let $\gamma_t, \gamma_{t'} \subset V$ be two maximal vertical line segments at real parts $t<t'$ respectively. Set 
	\begin{equation}\label{eq_S}
		S \defeq \{ x + iy \in \C \colon |y| < 1/2 \},
	\end{equation}
	and let $\phi \colon V \rightarrow S$ be a conformal isomorphism such that each $\phi(\gamma_t)$
	and $\phi(\gamma_{t'})$ connects the upper and lower
	boundaries of the strip $S$, and such that $\phi(\gamma_t)$ separates $\phi(\gamma_{t'})$ from $-\infty$ in $S$. For $t\leq s \leq t'$, let $\theta(s)$ denote the shortest length of a vertical line segment at real part $s$ that separates $\gamma_t$ from $\gamma_{t'}$ in $V$. If $\int_{t}^{t'}ds/\theta(s) \geq 1/2$, then 
	\begin{equation}
		\min_{z \in \gamma_{t'}} \Rea \phi(z)- \max_{w \in \gamma_{t}} \Rea \phi(w)\geq \int_{t}^{t'}\frac{ds}{\theta(s)}- \frac{1}{\pi}\ln 32.
	\end{equation}
\end{theorem}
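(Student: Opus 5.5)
We give a standard extremal-length (length--area) argument. It compares a lower bound for the modulus of the quadrilateral cut out of \(V\) by \(\gamma_t,\gamma_{t'}\) with an upper bound for the same conformal invariant computed in the strip \(S\).

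\textbf{Step 1: the quadrilateral.} Let \(Q\subset V\) be the component of \(V\setminus(\gamma_t\cup\gamma_{t'})\) lying between the two crosscuts. Let \(\Gamma\) be the family of curves in \(Q\) joining \(\gamma_t\) to \(\gamma_{t'}\), and let \(\lambda(\Gamma)\) be its extremal length. For each \(s\in(t,t')\), write \(\beta_s\) for a vertical segment at real part \(s\) of length \(\theta(s)\) separating \(\gamma_t\) from \(\gamma_{t'}\); such a minimiser exists by the definition of \(\theta\). Every curve of \(\Gamma\) must cross every \(\beta_s\).

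\textbf{Step 2: lower bound for \(\lambda(\Gamma)\).} Use the admissible metric \(\rho(z)=1/\theta(\Rea z)\) on \(\bigcup_s\beta_s\), and \(\rho=0\) elsewhere. Any \(\gamma\in\Gamma\) crosses every \(\beta_s\), so its projection to the real axis covers \([t,t']\). Hence \(L_\rho(\gamma)\ge\int_t^{t'}ds/\theta(s)\eqdef I\). The area is \(\int\rho^2\,dA\le\int_t^{t'}\theta(s)\,\theta(s)^{-2}\,ds=I\). Therefore \(\lambda(\Gamma)\ge I^2/I=I\). Measurability of \(\theta\) needs a brief remark; it is lower semicontinuous, or one may approximate by step functions.

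\textbf{Step 3: upper bound in the strip.} Extremal length is conformally invariant, so \(\lambda(\Gamma)\) equals the extremal length of the family \(\phi(\Gamma)\) joining the crosscuts \(C_0=\phi(\gamma_t)\) and \(C_1=\phi(\gamma_{t'})\) of \(S\). Set \(x_0=\max\Rea C_0\), \(x_1=\min\Rea C_1\) and \(d=x_1-x_0\). We must show \(\lambda(\phi(\Gamma))\le d+\frac1\pi\ln 32\).

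Suppose \(d<0\). Then the crosscuts come within distance \(0\) in real part. The same ring argument below then bounds \(\lambda\) by an absolute constant below \(1/2\), which contradicts the hypothesis \(I\ge 1/2\); hence \(d\ge0\) may be assumed.

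Reflect \(S\) across its boundary lines repeatedly, or equivalently map by \(z\mapsto e^{2\pi z}\) on the doubled strip. This converts the strip into a slit plane, and converts the condition ``curves separating \(C_0\) from \(C_1\)'' into a ring domain separating two continua. One continuum joins \(0\) to a point of modulus \(e^{2\pi x_0}\). The other joins a point of modulus \(e^{2\pi x_1}\) to \(\infty\).

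By duality, the extremal length of \(\phi(\Gamma)\) is the reciprocal of that of the separating family. Teichmüller's extremal problem then bounds the ring modulus from above by \(\frac{1}{2\pi}\log\bigl(16(e^{2\pi d}+1)\bigr)\). The factor \(2\) from the reflection doubling converts this into \(\lambda(\phi(\Gamma))\le d+\frac{1}{\pi}\ln 16+\frac1\pi\ln(1+e^{-2\pi d})\le d+\frac1\pi\ln 32\).

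\textbf{Step 4: combine.} Steps 2 and 3 give \(I\le\lambda=\lambda(\phi(\Gamma))\le d+\frac1\pi\ln32\), which is the claim.

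\textbf{Main obstacle.} The delicate part is Step 3. The symmetrisation/Teichmüller-ring bound has to be set up so that the normalisation (strip width \(1\)) produces exactly the constant \(\frac1\pi\ln32\). The hypothesis \(I\ge1/2\) is what excludes overlapping crosscuts. For this step I would follow Ahlfors' treatment (\emph{Conformal invariants}, Ch.~4) verbatim, rescaled from width \(\pi\) to width \(1\).
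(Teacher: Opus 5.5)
The paper does not prove this statement; it quotes it from Ahlfors (\emph{Conformal invariants}, Corollary to Theorem~4.8). Your plan is the standard extremal-length proof of Ahlfors' distortion theorem: the length--area bound $\lambda\ge\int_t^{t'}ds/\theta(s)$ for the quadrilateral between $\gamma_t$ and $\gamma_{t'}$, then a Teichm\"uller-ring upper bound for the same extremal length in $S$. Steps~1, 2 and~4 are fine. For $d\ge0$, Step~3 also works once the normalisation is corrected.

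\textbf{Normalisation.} Use $z\mapsto e^{\pi z}$, which maps $S$ onto the right half-plane, and reflect in the imaginary axis. The resulting ring separates $\{0,w_0\}$ from $\{w_1,\infty\}$, with $|w_0|=e^{\pi x_0}$ and $|w_1|=e^{\pi x_1}$. Its modulus is therefore at most $\frac1{2\pi}\log\Psi(e^{\pi d})\le\frac1{2\pi}\log\bigl(16(e^{\pi d}+1)\bigr)$. Here $\Psi$ is Teichm\"uller's function: the ring $\mathbb{C}\setminus([-1,0]\cup[P,\infty))$ has modulus $\frac1{2\pi}\log\Psi(P)$. The symmetry principle then gives $\lambda=2\cdot\mathrm{mod}\le d+\frac1\pi\ln16+\frac1\pi\ln(1+e^{-\pi d})$. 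With $e^{2\pi z}$ and the ratio $e^{2\pi d}$, as you wrote, the doubling produces $2d$ rather than $d$.

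\textbf{The genuine gap is the case $d<0$.} It is not true that the ring argument then bounds $\lambda$ by a constant below $1/2$.
\begin{itemize}
\item The sharp bound $\lambda\le\frac1\pi\log\Psi(e^{\pi d})$ is essentially attained by two crosscuts. The first, $C_0$, runs from far to the left along the top edge of $S$ up to real part $x_0$. The second, $C_1$, runs along the bottom edge from real part $x_1$ far to the right.
\item This bound tends to $\frac1\pi\log\Psi(1)=1$ as $d\to0^-$; at $d=0$ the limiting quadrilateral is a conformal square. So $\lambda\ge1/2$ does not exclude overlapping crosscuts.
\item Worse, for $d<0$ the inequality you need, $\lambda\le d+\frac1\pi\ln32$, fails. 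Equivalently it asks for $\Psi(P)\le32P$ with $P=e^{\pi d}$. Taking $P=0.2$ gives $d\approx-0.51$ and $\lambda\approx0.70$, while $d+\frac1\pi\ln32\approx0.59$.
\end{itemize}

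This is not just a weakness of the method. Take $V$ to be $\{x+iy: t-\epsilon<x<t'+\epsilon,\ 0<y<1\}$ with $t'-t\approx0.7$. Attach a half-strip channel of width $\epsilon'$ at the bottom of the left side, and another at the top of the right side. Then $\theta\equiv1$ on $[t,t']$. As $\epsilon\ll\epsilon'\to0$, the images $\phi(\gamma_t)$ and $\phi(\gamma_{t'})$ approach exactly this extremal configuration. So, as far as I can see, the inequality as stated fails for $\int_t^{t'}ds/\theta\approx0.7$, and no argument can cover that range under the hypothesis $\int\ge1/2$.

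What your argument does prove is the statement under the stronger hypothesis $\int_t^{t'}ds/\theta(s)\ge1$. Then $\lambda\ge1$ forces $d\ge0$, because $\Psi$ is increasing with $\Psi(1)=e^{\pi}$, and the $d\ge0$ computation finishes the proof. That is all the paper actually uses. In its two applications the integral is at least $B/(2\pi)>\frac1\pi\ln32>1$ and at least $1/(2t_n)\ge2$, respectively.
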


Before applying Ahlfors’ distortion theorem, we record an elementary estimate on the conformal map $\psi:S\to H$, which will play a key role in translating hyperbolic estimates into Euclidean growth estimates.

\begin{lemma}\label{lem:psi_basic_estimates}
Let \(S\) be as in \eqref{eq_S}, and let \(\psi:S\to H\) be the conformal isomorphism with \(\psi(0)=1\) and \(\psi'(0)>0\). Then there are constants \(a_\psi,A_\psi>0\), depending only on \(H\), such that 
	\begin{equation}\label{eq:lem_aux1}
e^{a_\psi u}\psi(s)\leq\psi(s+u)\leq e^{A_\psi u}\psi(s) \quad \text{ for all } s\in\R \text{ and } u\geq0.
	\end{equation}
Moreover, for every \(s\in\R\), \(\max_{|y|<1/2}\Rea\psi(s+iy)=\psi(s)\).
\end{lemma}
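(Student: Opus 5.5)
The plan is to use the symmetries of \(S\) and \(H\) under complex conjugation, together with the fact that \(\psi\) maps \(\R\) onto \(H\cap\R=(-14\log_+ 0,\infty)\). Since both domains are symmetric under \(z\mapsto\bar z\), the normalisation \(\psi(0)=1\), \(\psi'(0)>0\) and uniqueness give \(\psi(\bar z)=\overline{\psi(z)}\). Hence \(\psi\) maps \(\R\) into \(\R\), and in fact onto \(H\cap\R=(0,\infty)\) (note \(\log_+|0|=0\)), increasingly. So \(s\mapsto\psi(s)\) is a positive increasing real function with \(\psi(s)\to 0\) as \(s\to-\infty\) (the finite boundary point \(0\) of \(H\) on the real axis) and \(\psi(s)\to+\infty\) as \(s\to+\infty\).

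For \eqref{eq:lem_aux1}, I would study the asymptotics of \(\log\psi(s)\) at both ends separately and combine them with compactness in between. At the \(-\infty\) end, \(H\) near \(0\) looks locally like a domain bounded by the curve \(x=-14\log|y|\) for \(|y|\geq 1\) and the segment \(\{0\}\times[-1,1]\) where \(\log_+\) vanishes. Thus near \(0\) the boundary is a straight vertical segment, and \(H\) is locally a half-plane. By Schwarz reflection across this segment, \(\psi\) extends conformally near \(-\infty\) in \(S\), in the sense that \(\psi(s)\sim c\,e^{\pi s}\) as \(s\to-\infty\): a strip end mapped to a smooth boundary point. So \(\frac{d}{ds}\log\psi(s)\to\pi\).

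At the \(+\infty\) end, \(H\) contains \(\HH\) and is contained in a domain whose width at real part \(x\) is \(2e^{x/14}\) for \(x<0\), and which contains the whole of \(\{x>0\}\). So \(H\) near infinity is comparable to a half-plane, and \(\psi(s)\) should grow like \(e^{\pi s}\) up to bounded multiplicative error. More precisely, I would compare \(\psi\) with the map \(S\to\HH\), \(z\mapsto e^{\pi z}\), via the Schwarz lemma or Ahlfors-type estimates (Theorem~\ref{thm_ahlfors} applied to \(\psi^{-1}\)), to get \(\log\psi(s)=\pi s+O(1)\) for \(s\ge0\). To turn this into two-sided derivative-type bounds I would rather use the Koebe distortion and hyperbolic estimates \eqref{eq:hypest}. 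The hyperbolic density of \(S\) on \(\R\) is constant, and \(\rho_H(x)\asymp 1/\dist(x,\partial H)\), where \(\dist(x,\partial H)\asymp x\) for real \(x>0\): for large \(x\) this holds since the boundary curve is within distance \(\asymp x\) of \(x\), and for small \(x\) the distance equals \(x\). Since \(\psi\) is an isometry, \(\psi'(s)\rho_H(\psi(s))=\rho_S(s)=\text{const}\), which gives \(\psi'(s)/\psi(s)\in[a_\psi,A_\psi]\) for all real \(s\). Integrating from \(s\) to \(s+u\) yields \eqref{eq:lem_aux1}. The main obstacle is checking \(\dist(x,\partial H)\asymp x\) uniformly in \(x>0\), which is an elementary computation with the curve \(x=-14\log|y|\).

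For the final claim, I would show that for fixed \(s\) the function \(y\mapsto\Rea\psi(s+iy)\) is maximised at \(y=0\). By symmetry it is even in \(y\). I would argue that the image under \(\psi\) of the vertical crosscut \(\{s+iy\}\) is a curve in \(H\) through \(\psi(s)\), symmetric about \(\R\), and that \(\psi(s+\R_{\ge 0}\cdot\ldots)\) lies to the left of the line \(\Rea w=\psi(s)\). To prove this, consider \(U=\{w\in H:\Rea w>\psi(s)\}\). This is a half-plane contained in \(H\), because \(H\supset\HH\) and \(\psi(s)>0\). By symmetry and connectivity, its preimage \(\psi^{-1}(U)\) is a simply connected subdomain of \(S\) that is symmetric about \(\R\), contains \((s,\infty)\), and whose boundary crosses \(\R\) only at \(s\). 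It remains to show that this preimage lies in \(\{\Rea z>s\}\). For this I would use a reflection/harmonic measure comparison: \(\Rea\psi\) is harmonic, and \(\Rea\psi(z)-\psi(\Rea z)\) should be \(\le 0\) by a maximum principle argument on the half-strip, using the monotonicity of \(\partial_y\Rea\psi=-\Ima\psi'\) with sign determined by \(\Ima\psi>0\) on the upper half. I expect this step to be the delicate one. The fallback is to show directly that \(\Ima\psi'(z)\le 0\)-type sign information fails to be needed, by proving \(\partial_y\Rea\psi(s+iy)=-\Ima\psi'(s+iy)<0\) for \(0<y<1/2\), via convexity in the \(y\)-direction of the horizontal sections of \(H\).
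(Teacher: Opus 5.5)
Your proof of \eqref{eq:lem_aux1} is the paper's argument: real symmetry gives $\psi(\R)=(0,\infty)$ with $\psi$ increasing, conformal invariance gives $\psi'(s)\rho_H(\psi(s))=\rho_S(0)$, and you then apply \eqref{eq:hypest} and integrate. The only remark is that $\dist(x,\partial H)=x$ exactly, since $0\in\partial H$ and $\HH\subset H$. So the discussion of the two ends, and any computation with the logarithmic curve, can be dropped.

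The gap is in the final assertion. The paper does not prove it from scratch; it invokes the claim on vertical crosscuts in \cite[p.~13]{letidave_variations}. Your reduction to $\psi^{-1}(\{\Rea w>\psi(s)\})\subset\{\Rea z>s\}$ is only a restatement, and neither of your arguments proves it.

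In the first route, $\Rea\psi(z)-\psi(\Rea z)$ is not harmonic. Its Laplacian is $-\psi''(\Rea z)$, so it is superharmonic wherever $\psi$ is convex on $\R$, and there is no maximum principle to apply. Moreover, $\Ima\psi>0$ on $S^+\defeq\{0<\Ima z<1/2\}$ gives no information on the sign of $\Ima\psi'$.

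Your fallback aims at the right statement, namely $\partial_y\Rea\psi=-\Ima\psi'\le0$ on $S^+$. However, it supplies no mechanism, and the geometric input is misnamed. What is needed is convexity of $H$ in the \emph{horizontal} direction: each line $\{\Ima w=t\}$ meets $H$ in the half-line $\{\Rea w>-14\log_+|t|\}$. The domain $H$ is not convex in the vertical direction, since for $x<0$ its vertical sections are pairs of rays.

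One way to close the gap is as follows.
\begin{enumerate}
\item The top edge of $S$ is mapped onto the upper boundary arc of $H$, running from $0$ to $\infty$. By horizontal convexity this arc is a graph over the positive imaginary axis, so $x\mapsto\Ima\psi(x+i/2)$ is nondecreasing.
\item The function $v\defeq\Ima\psi$ is positive and harmonic on $S^+$, vanishes on $\R$, and is bounded near the left end (for instance by your reflection argument at $0$).
\item Hence $v$ is the Poisson integral of its boundary values on $\partial S^+$, plus $\alpha e^{2\pi\Rea z}\sin(2\pi\Ima z)$ for some $\alpha\ge0$.
\item Since $S^+$ is invariant under real translations, its Poisson kernel is too. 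Together with step 1, this gives $v(z+h)\ge v(z)$ for every $h>0$.
\item Therefore $\Ima\psi'\ge0$ and $\partial_y\Rea\psi\le0$ on $S^+$. This yields $\Rea\psi(s+iy)\le\psi(s)$ for $0\le y<1/2$, and the case $y<0$ follows by symmetry.
\end{enumerate}
A Phragm\'en--Lindel\"of argument also works. In that case you need a growth bound for $\psi$ strictly below the critical rate $e^{2\pi\Rea z}$ for a strip of width $1/2$, holding uniformly up to the edges. As written, your proposal does not establish the last sentence of the lemma.
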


\begin{proof}
Since both \(S\) and \(H\) are symmetric with respect to the real axis, the normalization gives \(\psi(\overline z)=\overline{\psi(z)}\). Thus \(\psi(\R)\subset\R\). As \(\psi\) is a conformal isomorphism from \(S\) onto \(H\), and \(\psi(0)=1\), it follows that \(\psi(\R)=(0,\infty)\). Moreover, the condition \(\psi'(0)>0\) fixes the orientation, so \(\psi\) is increasing on \(\R\). For \(x>0\), we have \(\dist(x,\partial H)=x\). Thus \eqref{eq:hypest} gives \(1/(2x)\le\rho_H(x)\le 2/x\). By conformal invariance of the hyperbolic density, \(\rho_H(\psi(s))\psi'(s)=\rho_S(s)=\rho_S(0)\) for \(s\in\R\), since real translations are automorphisms of \(S\). Writing \(\lambda_S\defeq\rho_S(0)\), we obtain
\[
\frac{\lambda_S}{2}\le \frac{\psi'(s)}{\psi(s)}\le 2\lambda_S
	\qquad \text{for all }s\in\R.
\]

Now let \(s\in\R\) and \(u\geq0\). Integrating from \(s\) to \(s+u\), we obtain
		\[
		\frac{\lambda_S}{2}u
		\leq \int_s^{s+u}\frac{\psi'(t)}{\psi(t)}\,dt= \log\frac{\psi(s+u)}{\psi(s)}
		\leq 2\lambda_S u.
		\]
		Exponentiating and denoting \(a_\psi=\lambda_S/2\) and \(A_\psi=2\lambda_S\), \eqref{eq:lem_aux1} follows.
	
The final assertion follows from the claim on vertical crosscuts in \cite[p.~13]{letidave_variations}. Indeed, applying that claim to the vertical segment $\sigma_s=\{s+iy: |y|<1/2\},$ we see that any point where \(\Rea \psi\) attains its maximum on
		\(\sigma_s\) must be real. Since the only real point of \(\sigma_s\) is
		\(s\), this maximum is attained at \(s\), and so
		$
		\max_{|y|<1/2}\Rea \psi(s+iy)=\Rea \psi(s)=\psi(s).
		$
\end{proof} 

	\begin{corollary}
	\label{cor:psi_large_real}Let \(S\) be as in \eqref{eq_S}, and let \(\psi:S\to H\) be the normalised conformal map from Lemma~\ref{lem:psi_basic_estimates}. For \(b\in\R\), set
	\[
	\Psi_b(\xi)\defeq \frac{\psi(b+\xi)}{\psi(b)},
	\qquad \xi\in S .
	\]
	Then, as \(b\to+\infty\),
	$
	\Psi_b(\xi)\longrightarrow e^{\pi \xi}
	$
	locally uniformly in \(S\), with convergence of derivatives. In particular,
	for every \(\rho\in(0,1/2)\),
	$
	\frac{\Rea \psi(b+it)}{\psi(b)}
	\longrightarrow
	\cos(\pi t)
	$
	in \(C^2([-1/2+\rho,1/2-\rho])\).
\end{corollary}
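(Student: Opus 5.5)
We argue by a normal-families/uniqueness argument, after first recording the behaviour of \(\psi\) at the left end of the strip. By Lemma~\ref{lem:psi_basic_estimates}, \(\psi\) is increasing on \(\R\) with \(\psi(s)\to\infty\) as \(s\to+\infty\). Hence \(\Psi_b(0)=1\) for every \(b\), and \(\Psi_b'(0)=\psi'(b)/\psi(b)\in[\lambda_S/2,2\lambda_S]\). Moreover, for \(\xi\in S\) with \(|\Rea\xi|\le R\), the estimate \eqref{eq:lem_aux1} together with the hyperbolic estimate \eqref{eq:hypest} should give local boundedness of \(\{\Psi_b\}\) on compact subsets of \(S\).

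A cleaner route to local boundedness is to note that \(\Psi_b\) maps \(S\) conformally onto \(H/\psi(b)\), a domain contained in \(\{x+iy: x>-14\log_+|y\psi(b)|/\psi(b)\}\). This in particular omits the ray \((-\infty,-\epsilon_b]\) for suitable \(\epsilon_b\to0\), so the family is normal, with \(\Psi_b(0)=1\) and derivatives at \(0\) bounded away from \(0\) and \(\infty\). By the Carath\'eodory kernel theorem (used with the remark on kernel convergence), it then suffices to identify the kernel limit of the rescaled domains \(H_b\defeq H/\psi(b)\) with respect to the base point \(1\).

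For this, take a compact \(K\subset\C\setminus(-\infty,0]\). Points \(w=x+iy\in K\) satisfy \(w\psi(b)\in H\) once \(b\) is large. If \(x>0\) this is immediate. If \(x\le0\) then \(|y|\ge c_K>0\), and we need \(x\psi(b)>-14\log(|y|\psi(b))\), which holds for large \(b\) because \(\log\psi(b)=o(\psi(b))\). Conversely, any point of \((-\infty,0)\) lies outside \(H_b\) for \(b\) large, since the boundary of \(H_b\) hits the real axis at the point \(-14\log(\cdot)/\psi(b)\cdot0\)-type scale. More precisely, \(H\cap\R=(-\infty? )\): because \(\log_+|0|=0\), we have \(H\cap\R=(0,\infty)\). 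Since the boundary of \(H_b\) collapses onto \((-\infty,0]\), the kernel of \(H_b\) is \(\C\setminus(-\infty,0]\), and the same holds along every subsequence.

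The limit map \(S\to\C\setminus(-\infty,0]\) is the conformal isomorphism sending \(0\mapsto1\), fixing real symmetry and orientation, and sending \(+\infty\) to \(\infty\). This map is \(\xi\mapsto e^{\pi\xi}\): it maps the strip of width \(1\) onto the slit plane. The normalisation \(\Psi_b'(0)>0\) together with \(\Psi_b(0)=1\) pins down the limit uniquely, so the Carath\'eodory kernel theorem gives \(\Psi_b\to e^{\pi\xi}\) locally uniformly, and Weierstrass' theorem gives convergence of all derivatives.

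For the \(C^2\) statement, we restrict to the compact segment \(\{it:|t|\le1/2-\rho\}\subset S\). There, \(\Rea\psi(b+it)/\psi(b)=\Rea\Psi_b(it)\to\Rea e^{i\pi t}=\cos\pi t\), and derivatives in \(t\) up to order two converge uniformly because the derivatives of \(\Psi_b\) converge uniformly on compacta. The main obstacle is the kernel identification near the origin. Points \(w\) near \(0\) are excluded, since compacta avoid \((-\infty,0]\), but we must check that nothing of \(H_b\) survives near the negative axis on compacta. This is exactly where the logarithmic, rather than linear, boundary of \(H\) is used.
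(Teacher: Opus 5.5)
Your overall route matches the paper's: normalise $\Psi_b(0)=1$ and $\Psi_b'(0)>0$, take kernel convergence of $H_b=\psi(b)^{-1}H$ with base point $1$, apply Carath\'eodory's theorem, then use Weierstrass and restrict to the segment $\{it\}$. But there is a genuine gap at the central step: you identify the kernel wrongly, and you also identify the image of the limit map wrongly.

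\textbf{The kernel.} Take $w=x+iy$ with $x<0$ fixed and $|y|$ bounded. Membership in $H_b$ requires $x\psi(b)>-14\log_+(|y|\psi(b))$. The left side tends to $-\infty$ linearly in $\psi(b)$, while the right side does so only logarithmically. So it is precisely because $\log\psi(b)=o(\psi(b))$ that the inequality \emph{fails} for large $b$, contrary to what you wrote. The boundary curve $X=-\frac{14}{\psi(b)}\log_+(\psi(b)|Y|)$ converges to the imaginary axis locally uniformly in $Y$, not onto $(-\infty,0]$. Hence the kernel, along every subsequence, is the right half-plane $\HH$, not $\C\setminus(-\infty,0]$. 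Your final paragraph actually has the right intuition: the logarithmic boundary of $H$ is what makes the left-hand part of $H$ disappear on compacta. That observation contradicts your own kernel computation.

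\textbf{The limit map.} The map $\xi\mapsto e^{\pi\xi}$ does not send $S$ onto the slit plane. Since $|\Ima\xi|<1/2$, its image is $\{|\arg w|<\pi/2\}=\HH$. The normalised conformal map from $S$ onto $\C\setminus(-\infty,0]$ would be $e^{2\pi\xi}$, which would give the limit $\cos(2\pi t)$ in the $C^2$ statement. So your two errors cancel only in the final formula, and the argument as written is not valid.

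\textbf{Repair.} Identify the kernel as $\HH$. The unique univalent map $S\to\HH$ with $0\mapsto 1$ and positive derivative at $0$ is $e^{\pi\xi}$. Your remaining steps then go through unchanged: the normality and non-degeneracy remarks, convergence of derivatives, and $\frac{d^k}{dt^k}\Rea\Psi_b(it)=\Rea\bigl(i^k\Psi_b^{(k)}(it)\bigr)$ converging uniformly on the compact segment $\{it:|t|\le 1/2-\rho\}$.
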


\begin{proof}
By Lemma~\ref{lem:psi_basic_estimates}, we have \(\psi(b)\to+\infty\). Write \(H_b\defeq \psi(b)^{-1}H\). Then
		\[
		H_b=
		\left\{X+iY:
		X>-\frac{14}{\psi(b)}\log_+(\psi(b)|Y|)
		\right\},
		\]
		and \(H_b\) converges, as \(b\to+\infty\), in the Carath\'eodory kernel sense, with base point \(1\), to the right half-plane \(\HH\). The maps \(\Psi_b:S\to H_b\) satisfy \(\Psi_b(0)=1\) and \(\Psi_b'(0)>0\). Hence the kernel theorem gives convergence to the unique normalised conformal map \(S\to\HH\), namely \(\xi\mapsto e^{\pi\xi}\).
	The convergence of derivatives follows by standard local uniform convergence
	of conformal maps. Taking real parts on compact subintervals of the central
	line gives the stated \(C^2\)-convergence.
\end{proof}

In our constructions, we will use a strategy similar to that in \cite[Section~5]{letidave_variations}, exploiting the geometry of the tracts together with Ahlfors-type distortion estimates to obtain growth bounds and hence prove that the resulting functions have finite order. The following lemma abstracts the key argument used to achieve this.

\begin{lemma}
	\label{lem:finite-order-criterion}
	Let $(G,V,H)$ be a model, with $H$ as in \eqref{eq_H}, \(-1\in V\), and \(G(-1)=1\), and let
	$f\in\mathcal B$ be the approximating entire function, with error term $h$, given by
	Theorem~\ref{thm_approx1.7}. Let $S$ be as in \eqref{eq_S}, let $\psi\colon S\to H$ be the conformal isomorphism with
	$\psi(0)=1$ and $\psi'(0)>0$, and set
	\[
	\phi\defeq \psi^{-1}\circ G\colon V\to S.
	\]
	Suppose that there are positive constants $t_0, A, B, C$, with $B > 2 \log 32$, such that the following two conditions hold for all $t > t_0$. First, there is a point $\zeta \in V$ such that 
	\[
	t\le \Rea \zeta 
	\qquad\text{and}\qquad
	d_V(-1,\zeta)\le At.
	\]
	Second, there exists $t' \in [t + B, t + B + C]$ such that $V \cap \{z\in\C:\Rea z=t'\}$ is connected.
	Then $f$ has finite order.
	
Moreover, for every \(T>0\), there is a constant
\(L_T>0\), depending only on \(T,t_0,A,B,C\) and \(H\), such that
$
\Rea G(z)\le L_T
$
for every \(z\in V\) with \(\Rea z\le T\).
\end{lemma}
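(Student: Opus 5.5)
Since $|f(e^z)|=|\exp(G(z))+h(e^z)|\le \exp(\Rea G(z))+M$ for $z\in V$ with $\Rea z$ large, and $|f|$ is bounded by $M$ (for $|w|>1$) off $\exp(V)$, finite order reduces to a bound $\Rea G(z)\le \exp(O(\Rea z))$ on $V$. The plan is to establish a bound of the form
\[
\Rea G(z)\le e^{K\Rea z}\qquad\text{for } z\in V,\ \Rea z>t_0',
\]
for some constants $K,t_0'$. Then $\log M(r,f)\le r^{K}+O(1)$, so $\rho(f)\le K$. Writing $G=\psi\circ\phi$ and invoking Lemma~\ref{lem:psi_basic_estimates}, we have $\Rea G(z)\le \psi(\Rea\phi(z))\le \psi(0)e^{A_\psi\Rea\phi(z)}=e^{A_\psi\Rea\phi(z)}$ whenever $\Rea\phi(z)\ge 0$. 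Here we use that $\max_{|y|<1/2}\Rea\psi(s+iy)=\psi(s)$ and that $\psi$ is increasing on $\R$. Hence it suffices to prove that $\Rea\phi(z)\le K'\Rea z+K''$ on $V$.

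To bound $\Rea\phi$ linearly, fix $z\in V$ with $t\defeq\Rea z>t_0$. Apply the first hypothesis at the level $t''$, where $t''$ is chosen so that the connected crosscut $\gamma_{t'}\defeq V\cap\{\Rea w=t'\}$ lies beyond $z$. Concretely, take $t'\in[t+B,t+B+C]$ from the second hypothesis; then $\gamma_{t'}$ is a single crosscut. Apply the first hypothesis with $t+B+C$ in place of $t$ to obtain $\zeta$ with $\Rea\zeta\ge t+B+C\ge t'$ and $d_V(-1,\zeta)\le A(t+B+C)$. Since $\phi$ is a hyperbolic isometry onto $S$, and in $S$ the hyperbolic distance dominates a constant multiple of the difference in real parts (by \eqref{eq:hypest}, $\rho_S\ge 1$), we get
\[
|\Rea\phi(\zeta)|\le |\Rea\phi(-1)|+d_S(\phi(-1),\phi(\zeta))\le c+A(t+B+C).
\]
The crosscut $\gamma_{t'}$ separates $V$. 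Because $G$ sends the right end of $V$ to $\infty$ in $H$, it separates $z$ (at real part $t<t'$), together with everything of real part $<t'$, from the end $\Rea\to+\infty$. In $S$, the image $\phi(\gamma_{t'})$ is a crosscut joining the two boundary lines, with $\phi(z)$ on its left and $\phi(\zeta)$ on its right, or on it.

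The remaining task is to compare $\phi(z)$ with $\phi(\zeta)$ across this crosscut. This is the main obstacle, since $\phi(\gamma_{t'})$ may a priori be long and wiggly in $S$. To handle it, use a second connected crosscut $\gamma_{t'''}$ with $t'''\in[t'+B,t'+B+C]$. Every vertical segment separating $\gamma_{t'}$ from $\gamma_{t'''}$ has length at most $2\pi$, since $V$ is disjoint from its $2\pi i$ translates and so contains no vertical segment of length greater than $2\pi$. Thus $\theta\le 2\pi$, and $\int ds/\theta\ge B/(2\pi)$. Theorem~\ref{thm_ahlfors} then gives a positive gap between $\phi(\gamma_{t'})$ and $\phi(\gamma_{t'''})$. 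More importantly, both crosscuts map to curves lying in vertical bands. To control the width of $\phi(\gamma_{t'})$ itself, choose $\zeta$ as above beyond $\gamma_{t'''}$. Then $\max_{\gamma_{t'}}\Rea\phi\le\min_{\gamma_{t'''}}\Rea\phi$, and the minimum over $\gamma_{t'''}$ is at most $\Rea\phi(\zeta)$ plus a bounded amount. This follows by applying the same argument to the region between $\gamma_{t'''}$ and $\zeta$ in the reverse direction, or alternatively by noting that a point of $\gamma_{t'''}$ near $\zeta$ can be reached from $\zeta$ within hyperbolic distance $O(1)$. This is where the condition $B>2\log 32$ is used: it makes the gap positive, so the images of successive crosscuts are ordered and disjoint. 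Since $\phi(z)$ lies to the left of $\phi(\gamma_{t'})$, the maximum principle for $\Rea\phi$ on the component of $V$ cut off by $\gamma_{t'}$ (via the Phragmén--Lindelöf behaviour at the left end) gives $\Rea\phi(z)\le\max_{\gamma_{t'}}\Rea\phi\le c+A(t+2B+2C)+O(1)$, which is linear in $t$.

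Finally, for the last assertion, fix $T$ and take $t=\max(T,t_0)$. The same separation argument shows that every $z\in V$ with $\Rea z\le T$ lies to the left of the crosscut $\phi(\gamma_{t'})$. Hence $\Rea\phi(z)\le c+A(t+2B+2C)+O(1)$, and therefore $\Rea G(z)\le\psi(\text{that bound})\eqdef L_T$. This quantity depends only on $T,t_0,A,B,C$ and on $H$ through $\psi$ and the normalisation $\phi(-1)=\psi^{-1}(1)=0$, which gives $c=0$.
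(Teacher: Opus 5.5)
Your argument is correct and is essentially the paper's proof. Both use two connected crosscuts $\gamma_{t'},\gamma_{t'''}$ from the second hypothesis, Theorem~\ref{thm_ahlfors} with $\theta\le 2\pi$ to separate their images in $S$, the first hypothesis applied beyond $\gamma_{t'''}$ to bound real parts in $S$ linearly in $t$, and Lemma~\ref{lem:psi_basic_estimates} to convert this into $\Rea G(z)\le \exp(O(\Rea z))$. The only difference is cosmetic: the paper gets the linear bound by letting the geodesic from $-1$ cross a vertical line $\{\Rea w=c\}$ between the two image crosscuts, while you use $\Rea\phi(\zeta)\le d_S(0,\phi(\zeta))$ directly, which is equivalent.

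One step needs repair: $\inf_{\gamma_{t'''}}\Rea\phi\le\Rea\phi(\zeta)$ holds with no error term, because the half-strip $\{w\in S:\Rea w<\inf_{\gamma_{t'''}}\Rea\phi\}$ lies in the left component of $S\setminus\phi(\gamma_{t'''})$ while $\phi(\zeta)$ does not. Your alternative reason, an $O(1)$ hyperbolic distance from $\zeta$ to $\gamma_{t'''}$, is false, since nothing bounds $\Rea\zeta$ from above. Also, in the last part take $t=\max\{T,t_0+1\}$ so that $t>t_0$.
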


\begin{proof}
Fix \(t>t_0\). By hypothesis, choose
\(t_1\in[t+B,t+B+C]\) and \(t_2\in[t_1+B,t_1+B+C]\) such that
\[
\gamma_j\defeq V\cap\{z\in\C:\Rea z=t_j\},\qquad j=1,2,
\]
are connected. Then \(\gamma_1\) and \(\gamma_2\) are vertical crosscuts
of \(V\), and \(\phi(\gamma_1)\) and \(\phi(\gamma_2)\) connect the two
boundary components of \(S\). Moreover, \(\phi(\gamma_1)\) separates
\(\phi(\gamma_2)\) from \(-\infty\) in \(S\).

If \(\theta(s)\) denotes the shortest length of a vertical segment at real
part \(s\) separating \(\gamma_1\) from \(\gamma_2\) in \(V\), then
\(\theta(s)\le 2\pi\) for \(s\in[t_1,t_2]\), since \(V\) is disjoint from
its \(2\pi i\)-translates. Hence
\[
\int_{t_1}^{t_2}\frac{ds}{\theta(s)}
\ge \frac{t_2-t_1}{2\pi}
\ge \frac{B}{2\pi}
> \frac{\log 32}{\pi}> \frac12.
\]
Thus the hypothesis of Theorem~\ref{thm_ahlfors} is satisfied, and the theorem gives
\[
\min_{z\in\gamma_2}\Rea\phi(z)
-
\max_{w\in\gamma_1}\Rea\phi(w)>0.
\]
Thus there exists a vertical crosscut \(\sigma=\{c+iy:|y|<1/2\}\) of \(S\) separating \(\phi(\gamma_1)\) from \(\phi(\gamma_2)\). In particular, \(\phi^{-1}(\sigma)\) separates every point of \(V\) with real part at most \(t\), and in particular \(-1\), from the end at \(+\infty\) in \(V\). 

We now apply the first hypothesis with \(t_2\) in place of \(t\). Then there exists a point \(\eta\in V\) such that \(\Rea\eta\ge t_2\) and \(d_V(-1,\eta)\le At_2\). The hyperbolic geodesic joining \(-1\) to \(\eta\) must meet \(\phi^{-1}(\sigma)\); let \(p\) be its first point of intersection with \(\phi^{-1}(\sigma)\). Then
\[
d_V(-1,p)\le d_V(-1,\eta)\le At_2\le A(t+2(B+C)).
\]
Since \(\phi^{-1}(\sigma)\) separates \(-1\) from the end at \(+\infty\), we have \(c>0\). If \(\phi(p)=c+iy\), with \(|y|<1/2\), then writing \(\lambda_S\defeq\rho_S(0)\),
\[
\lambda_S c
= d_S(0,c)
\le d_S(0,c+iy)
= d_V(-1,p)
\le A(t+2(B+C)).
\]

Set \(\mu\defeq\max_{z\in\sigma}\Rea\psi(z)\). By Lemma~\ref{lem:psi_basic_estimates}, \(\mu=\psi(c)\). 
Using Lemma~\ref{lem:psi_basic_estimates} once more, we obtain
\[
\mu=\psi(c)\le \exp(A_\psi c)
\le \exp\left(\frac{A_\psi A}{\lambda_S}(t+2(B+C))\right),
\]
where \(A_\psi\) is the constant from Lemma~\ref{lem:psi_basic_estimates}.

If \(z\in V\) and \(\Rea z\le t\), write \(\phi(z)=x+iy\). Since \(\phi(z)\) lies in the component \(\{\Rea w<c\}\) of \(S\setminus\sigma\), we have \(x<c\). By Lemma~\ref{lem:psi_basic_estimates},
$
\Rea\psi(x+iy)\le \psi(x).
$
Since \(\psi\) is increasing on \(\R\), it follows that \(\psi(x)\le\psi(c)\).
Thus
\begin{equation}\label{eq:lemma_finite_order}
	\Rea G(z)=\Rea\psi(\phi(z))\le\psi(c)=\mu.
\end{equation}
Let \(r=e^t\). Hence, for every \(z\in V\) with \(\Rea z=t\), since \(g(e^z)=\exp(G(z))\), we have
\[
|g(e^z)|=\exp(\Rea G(z))\le \exp(\mu).
\]
Now let \(\xi\in\C\) with \(|\xi|=r\). If \(\xi\in\exp(V)\), then, since \(V\) is disjoint from its \(2\pi i\)-translates, there is a unique \(z\in V\) such that \(e^z=\xi\), and necessarily \(\Rea z=t\). Therefore Theorem~\ref{thm_approx1.7}, with \(M\) denoting the constant appearing there, gives
\[
|f(\xi)|
\le |g(\xi)|+|h(\xi)|
\le \exp(\mu)+M|\xi|^{-1}
= \exp(\mu)+Me^{-t}.
\]
On the other hand, if \(\xi\notin\exp(V)\), then the \(g\)-term is absent, and Theorem~\ref{thm_approx1.7} gives
\[
|f(\xi)|=|h(\xi)|\le M|\xi|^{-1}=Me^{-t}
\le \exp(\mu)+Me^{-t}.
\]
Thus, in both cases,
$
|f(\xi)|\le \exp(\mu)+Me^{-t}.
$
Consequently,
\[
M(r,f)
\le
\exp(\mu)+Me^{-t}
\le
\exp\left(\exp\left(\frac{A_\psi A}{\lambda_S}(t+2(B+C))\right)\right)
+Me^{-t}.
\]
It follows that
\[
\log\log M(r,f)
\le
\frac{A_\psi A}{\lambda_S}t+O(1)
=
\frac{A_\psi A}{\lambda_S}\log r+O(1)
\]
as \(r\to\infty\). Hence \(f\) has finite order.

Finally, let \(T>0\), and put \(t_T\defeq\max\{T,t_0+1\}\). Applying
\eqref{eq:lemma_finite_order} with \(t=t_T\), we obtain
\[
\Rea G(z)\le \psi\left(\frac{A(t_T+2(B+C))}{\lambda_S}\right)
\]
for every \(z\in V\) with \(\Rea z\le T\). Thus the final assertion follows
with
\[
L_T\defeq
\psi\left(\frac{A(\max\{T,t_0+1\}+2(B+C))}{\lambda_S}\right).
\]
This constant depends only on \(T,t_0,A,B,C\) and \(H\), as required.
\end{proof}
\section{Proof of Theorem~\ref{th:2}}\label{S.th2}

Let \((z_n)\) be a sequence as in Theorem~\ref{th:2}. If one of the points is \(0\), we omit it from the construction, since \(0\in\M(f)\) for every entire function \(f\). By relabelling the remaining points, we may assume that \(0<|z_n|<|z_{n+1}|\) for all \(n\in\N\). For each \(n\in\N\), choose an argument \(\arg(z_n)\in(-\pi,\pi]\) and define
\[
w_n \defeq x_n+iy_n \defeq \log|z_n| + i\,\arg(z_n).
\]
(Any other choice of $\arg(z_n)$ changes $w_n$ by an integer multiple of $2\pi i$. This will not affect the construction, since $\Mlog(f)$ is $2\pi i$--periodic; see \eqref{eq_max_translates}.)
In particular, $e^{w_n}=z_n$ for all $n$.

We will prove our result by constructing $f\in\mathcal B$ such that $w_n\in \Mlog(f)$ for all $n\in\N$; then \eqref{eq_max_translates} implies that $z_n\in\M(f)$ for all $n$. We shall next define our collections of domains $V(\uldelta)$, for some parameters $\uldelta$. 
For $\ell > 0$, consider the rectangle
\begin{align*}
	R_0 = R_0(\ell) \defeq \{ x + iy \in \C \colon -2 < x < 0 \text{ and } |y| < \ell \}.
\end{align*}

\begin{observation}
	\label{claim:ell}
	We can choose $\ell$ sufficiently small that the following holds. Suppose that $V \subset \C$ is an unbounded simply-connected domain, which strictly contains $R_0$, and is such that the upper, lower and left-hand boundaries of $R_0$ all lie in $\partial V$. Suppose also that $G \colon V \to H$ is a conformal map with $G(-1) = 1$, and also such that $G(z) \rightarrow \infty$ as $\Rea z \rightarrow +\infty$. Then 
	\[
	\max_{z \in \partial R_0 \cap V} \Rea G(z) \geq 4. 
	\]
\end{observation}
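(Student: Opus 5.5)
The plan is to argue by contradiction, comparing harmonic measures on the two sides of $G$. Let $\sigma\defeq\{iy\colon |y|<\ell\}$ be the right-hand side of $R_0$; this is exactly $\partial R_0\cap V$. Since the upper, lower and left-hand sides of $R_0$ lie in $\partial V$ and $V\supsetneq R_0$, the set $\sigma$ is a crosscut of $V$. Moreover, the component of $V\setminus\sigma$ containing $-1$ is exactly $R_0$. The other component contains the end $\Rea z\to+\infty$.

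Hence $G(\sigma)$ is a crosscut of $H$. Let $D\defeq G(R_0)$. Then $D$ is the component of $H\setminus G(\sigma)$ containing $1=G(-1)$. Because $G(z)\to\infty$ as $\Rea z\to+\infty$, the curve $G(\sigma)$ separates $1$ from $\infty$ in $H$, so $D$ does not contain a neighbourhood of $\infty$ in $H$. By conformal invariance of harmonic measure,
\[
\omega_D\bigl(1,G(\sigma)\bigr)=\omega_{R_0}(-1,\sigma).
\]

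\emph{Upper bound.} The quantity $\omega_{R_0}(-1,\sigma)$ is explicitly small. The rectangle $R_0$ sits inside the half-strip $\{x<0,\ |y|<\ell\}$, and $\sigma$ is its end side. Comparing with the half-strip (map it by $\exp(\pi z/(2\ell))$ to a half-disc), one gets
\[
\omega_{R_0}(-1,\sigma)\le C_0\,e^{-\pi/(2\ell)},
\]
with $C_0$ absolute. In particular, $\omega_{R_0}(-1,\sigma)\to 0$ as $\ell\to0$.

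\emph{Lower bound.} Suppose instead that $\Rea G\le 4$ on $\sigma$. Then $G(\sigma)\subset\{\Rea w\le 4\}$. Since $G(\sigma)$ separates $1$ from $\infty$, we get $D\subset U\defeq H\cap\{\Rea w<4\}$. The boundary $\partial D$ consists of $G(\sigma)$ together with a set $E\subset\partial H\cap\{\Rea w\le4\}\subset\partial U$. By the monotonicity of harmonic measure for boundary sets shared by nested domains,
\[
\omega_D(1,E)\le\omega_U(1,E)\le \omega_U\bigl(1,\partial H\cap\partial U\bigr)=1-\omega_U\bigl(1,\{\Rea w=4\}\cap H\bigr).
\]
Consequently,
\[
\omega_D\bigl(1,G(\sigma)\bigr)\ge c\defeq\omega_U\bigl(1,\{\Rea w=4\}\cap H\bigr)>0.
\]
The constant $c$ depends only on the fixed domain $H$ from \eqref{eq_H}.

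\emph{Conclusion.} Choose $\ell$ so small that $C_0e^{-\pi/(2\ell)}<c$. Then the two bounds contradict each other, so $\sup_{\sigma}\Rea G>4$.

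The supremum is attained, and the statement's $\max$ is justified, for the following reason. If $\Rea G(z)$ stayed bounded by some value $<4$ near the endpoints $\pm i\ell$, the same argument with $4$ replaced by that value applies. So it suffices to show that $\sup_\sigma \Rea G\ge4$. We can phrase the whole argument with this supremum, and we may also use that $\Rea G$ extends continuously to the closed segment except at the corners. Alternatively, we can replace $4$ by $5$ in the choice of $\ell$ to have room to spare.

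The main obstacle is the topological bookkeeping. One must check that $G(\sigma)$ really separates $1$ from $\infty$ in $H$, so that $D\subset U$. One must also check that the part of $\partial D$ other than $G(\sigma)$ lies on $\partial H$. Both follow from the facts that $G$ is a homeomorphism $V\to H$ sending the right end of $V$ to $\infty$, and that $R_0$ is a full component of $V\setminus\sigma$.
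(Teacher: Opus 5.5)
Your proof is correct, but it goes through harmonic measure, whereas the paper uses hyperbolic distance.

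\textbf{The paper's route.} Write $J\defeq\partial R_0\cap V$. If $\Rea G<4$ on $J$, then $G(J)$, which separates $1$ from $\infty$ in $H$, misses $[4,\infty)$ and so must meet $[1,4)$. Since $[1,4]$ lies on the symmetry geodesic of $H$, this gives $d_V(-1,J)=d_H(1,G(J))\le d_H(1,4)$. On the other hand, any curve from $-1$ to $J$ must travel Euclidean distance at least $1$ inside $R_0$, where $\dist(z,\partial V)\le\ell$. So \eqref{eq:hypest} gives $d_V(-1,J)\ge 1/(2\ell)$, and $\ell<1/(2d_H(1,4))$ suffices.

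\textbf{Comparison.} The paper's argument is shorter and uses only the density estimate already recorded in the paper. It needs neither the inclusion $G(R_0)\subset H\cap\{\Rea w<4\}$, nor conformal invariance of harmonic measure with its boundary correspondence, nor the extension-of-domain (Carleman) principle. Your version buys a much stronger, exponential estimate $\omega_{R_0}(-1,\sigma)=O(e^{-\pi/(2\ell)})$. It also gives a transparent picture: the exit side is nearly invisible from $-1$, yet it would have to carry a definite amount of harmonic measure if its image lay in $\{\Rea w\le 4\}$. The cost is the extra machinery.

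\textbf{Points to tidy.}
\begin{itemize}
\item Under the stated hypotheses, $\partial R_0\cap V$ may be a proper open subset of the right-hand side $\sigma$, so you should work with $J$ instead of $\sigma$ throughout. This is harmless: $\omega_{R_0}(-1,J)\le\omega_{R_0}(-1,\sigma)$, and your lower bound only uses $\partial D\cap H=G(J)$.
\item The inclusion $D\subset H\cap\{\Rea w<4\}$ is cleanest if you first note that $D=G(R_0)$ is bounded, by the end condition on $G$. The half-plane $\{\Rea w>4\}$ is connected, unbounded and disjoint from $G(J)$, so it cannot meet $D$.
\item Your final paragraph on attainment of the maximum is unnecessary. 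The contradiction already produces a point of $J$ at which $\Rea G>4$, which is all the statement requires.
\end{itemize}
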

\begin{proof}
	Let \(J\defeq\partial R_0\cap V\). The set \(J\) is non-empty: otherwise \(R_0\) would be both open and closed in \(V\), contradicting that \(V\) is connected and strictly contains \(R_0\). Set \(C_0\defeq d_H(1,4)<\infty\). Since \(H\) is symmetric with respect to the real axis, the interval \([1,4]\) lies on the symmetry geodesic, and hence \(d_H(1,t)\le C_0\) for \(1\le t\le4\). Since \(J\) separates \(-1\) from the end at infinity in \(V\), \(G(J)\) separates \(1\) from infinity in \(H\). If \(\max_J\Rea G<4\), then \(G(J)\) must meet \([1,4]\); otherwise \([1,\infty)\subset H\) would connect \(1\) to infinity without crossing \(G(J)\). Hence, by conformal invariance, \(d_V(-1,J)=d_H(1,G(J))\le C_0\). Conversely, every curve from \(-1\) to \(J\) contains, before first hitting \(J\), a subcurve in \(R_0\) of Euclidean length at least \(1\). Since \(\dist(z,\partial V)\le\ell\) on \(R_0\), \eqref{eq:hypest} gives \(d_V(-1,J)\ge 1/(2\ell)\). Choosing \(\ell<1/(2C_0)\) gives a contradiction.
\end{proof}

From now on we assume that \(\ell\) has been chosen so that the implication of Observation~\ref{claim:ell} holds for any domain \(V\) as in the observation. Let \(M>0\) be the constant from Theorem~\ref{thm_approx1.7} for such models, normalised by \(G^{-1}(1)=-1\). By Theorem~\ref{thm_approx1.7}, under this normalisation the same constant \(M\) may be used for all the domains and conformal maps considered here. Increasing \(M\) if necessary, we can assume that \(M>e^2\). Set
\begin{equation}\label{eq_L}
	L\defeq \log M, \quad \text{ so that} \quad L>2.
\end{equation}

After an initial rescaling, we may assume without loss of generality that \(x_1=\Rea w_1\) is larger than any fixed constant required below; in particular, we assume that
\[
\Rea w_1>L+3.
\]
Indeed, replacing the prescribed sequence \((z_n)\) by \((cz_n)\), with \(c>0\), replaces each logarithmic lift \(w_n\) by \(w_n+\log c\). Once the construction has been carried out for the rescaled sequence, let \(\tilde{f}\) be the resulting function and set \(f(z)\defeq \tilde{f}(cz)\). This gives the required function for the original sequence. Indeed, \(M(r,f)=M(cr,\tilde{f})\), so \(cz_n\in\M(\tilde{f})\) implies \(z_n\in\M(f)\). This rescaling preserves membership of \(\B\), and it also preserves finite order.

Let \(\vartheta:[x_1,\infty)\to\R\) be the continuous piecewise affine
function such that \(\vartheta(x_n)=y_n\) for all \(n\in\N\), and put
$
\gamma=\{x+i\vartheta(x):x\ge x_1\}.
$
Let \(T\) be the open trapezoidal region bounded by the two segments
\([-i\ell,w_1-i]\) and \([i\ell,w_1+i]\), and by the vertical segments
\(\{iy:|y|\le\ell\}\) and \(\{x_1+iy:|y-y_1|\le1\}\).
Define the domain
\[
U\defeq \operatorname{int}\left(
\overline{R_0}\cup\overline T\cup
\{x+iy:x\ge x_1,\ |y-\vartheta(x)|\le1\}
\right).
\]
Then \(U\) is open, simply connected, and, after decreasing \(\ell\) if
necessary, disjoint from its non-zero \(2\pi i\)-translates. Moreover,
$
U\cap\{\Rea z=x_n\}=\{x_n+iy:|y-y_n|<1\}.
$
All tracts in this part of the construction will be obtained from this fixed domain \(U\) by cutting along vertical slits and leaving one short gate at each line \(\Rea z=x_n\).

We call a sequence \(\uleta=(\eta_n)_{n\in\N}\) \emph{admissible} if \(0<\eta_n<1/10\), the discs \(\overline{D(w_n,20\eta_n)}\) are contained in \(U\), are pairwise disjoint, and meet no vertical line \(\{\Rea z=x_m\}\) with \(m\ne n\). For an admissible \(\uleta\), set
\[
\Delta_{\uleta}\defeq\prod_{n\in\N}[0,3\eta_n].
\]
For \(\uldelta=(\delta_n)_{n\in\N}\in\Delta_{\uleta}\), define
\[
I_{n,\uleta}(\uldelta)
\defeq
\{x_n+iy:y_n-2\eta_n+\delta_n<y<y_n-\eta_n+\delta_n\},
\]
and
\[
L_{n,\uleta}(\delta_n)
\defeq
\bigl(U\cap\{\Rea z=x_n\}\bigr)\setminus I_{n,\uleta}(\uldelta).
\]
Finally set
\[
V_{\uleta}(\uldelta)\defeq U\setminus\bigcup_{n\in\N}L_{n,\uleta}(\delta_n).
\]
By definition, \(V_{\uleta}(\uldelta)\) is open, simply connected and disjoint from its non-zero \(2\pi i\)-translates. 

\begin{figure}[htp]
	\centering
	\def\svgwidth{0.9\linewidth}
	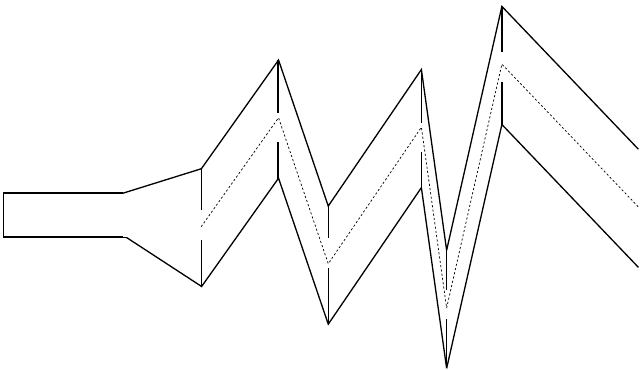
	\caption{Schematic of the fixed domain \(U\) and of one tract
		\(V_{\uleta}(\uldelta)\). At the line \(\Rea z=x_n\), the tract
		\(V_{\uleta}(\uldelta)\) intersects the line in the short gate
		\(I_{n,\uleta}(\uldelta)\). Varying
		\(\delta_n\) moves this gate vertically near \(w_n\), while \(U\) remains
		fixed.} \label{fig:tract_points}
\end{figure}

For \(\uldelta\in\Delta_{\uleta}\), let \(G_{\uleta,\uldelta}\) be the conformal isomorphism from \(V_{\uleta}(\uldelta)\) to \(H\), normalised by \(G_{\uleta,\uldelta}(-1)=1\) and \(G_{\uleta,\uldelta}(z)\to\infty\) as \(\Rea z\to+\infty\). Let \(g_{\uleta,\uldelta}\) be the corresponding function satisfying \eqref{eq:gG}, for the model \((G_{\uleta,\uldelta},V_{\uleta}(\uldelta),H)\), and let \(f_{\uleta,\uldelta}\) and \(h_{\uleta,\uldelta}\) be the corresponding functions provided by Theorem~\ref{thm_approx1.7}.

In what follows, \emph{maximiser} of a real-valued function on a compact segment means a point at which the maximum over that segment is attained.

\begin{lemma}[Choice of the gate widths]\label{lem:narrow_gate_unique}
	There exists an admissible sequence \(\uleta=(\eta_n)_{n\in\N}\) such that, for every \(n\in\N\) and every \(\uldelta\in\Delta_{\uleta}\), the function \(z\mapsto |f_{\uleta,\uldelta}(e^z)|\) has a unique maximiser on \(\overline{I_{n,\uleta}(\uldelta)}\). This maximiser belongs to \(I_{n,\uleta}(\uldelta)\).
	
	Moreover, if there is \(K_0>0\) such that \(x_{n+1}-x_n\ge K_0\) for all \(n\), then, after the initial rescaling above, the sequence \(\uleta\) may be chosen to be constant: \(\eta_n=\eta_0\) for all \(n\), for some \(\eta_0>0\).
\end{lemma}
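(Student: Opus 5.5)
The idea is to show that, once the gate \(I_{n,\uleta}(\uldelta)\) is very short, \(|f_{\uleta,\uldelta}(e^z)|\) restricted to the gate is a small perturbation of a strictly concave function. The model for this concavity is the map \(\psi\) on the strip \(S\) (Corollary~\ref{cor:psi_large_real}). We work with
\[
u(z)\defeq \log|f(e^z)| = \log|e^{G(z)}+h(e^z)|
\]
on the closed segment \(\overline{I_n}\), writing \(G=G_{\uleta,\uldelta}\) and \(I_n=I_{n,\uleta}(\uldelta)\). Since \(u=\Rea G+\log|1+e^{-G}h(e^z)|\), the first task is to show \(\Rea G\) is large on \(\overline{I_n}\). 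Observation~\ref{claim:ell} gives \(\Rea G\ge 4\) somewhere on the first crosscut. Since \(G\) is normalised with \(G\to\infty\) to the right, harmonic measure and separation arguments show that \(\Rea G\) is at least of order \(\Rea w_1>L+3\) near the gates. Using \(|h(e^z)|\le Me^{-x_n}\), the error term is then bounded by \(e^{L-x_n-\Rea G}\), which is tiny.

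Next, fix \(n\) and look at the local geometry. Near \(w_n\), the domain \(V_{\uleta}(\uldelta)\) looks like the whole plane slit along the line \(\Rea z=x_n\) except for a window of length \(\eta_n\). Indeed, admissibility guarantees that within \(D(w_n,20\eta_n)\) there is no other boundary besides these two slits. Rescale by \(\zeta=(z-c_n)/\eta_n\), where \(c_n\) is the centre of the gate. The rescaled domains converge in the kernel sense to the plane minus the line \(\Rea\zeta=0\) with the window \(|\Ima\zeta|<1/2\) removed, and this convergence is uniform in \(\delta_n\in[0,3\eta_n]\) and in the remaining parameters.

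On this limit domain \(\Omega\), consider the positive harmonic function vanishing on the slits that grows towards the right. This is the limit of \((\Rea G-\Rea G(c_n))/(\text{scale})\); more precisely, \(G\) composed with the rescaling, minus a constant, converges after normalisation. The limit is explicit via a Joukowski-type map, and its restriction to the window is a multiple of \(\Rea\sqrt{1/4-\zeta^2}\)-type profiles. That restriction is strictly concave, symmetric, and maximised only at the centre, with a nondegenerate second derivative on compact subwindows. It also vanishes at the endpoints, so near the endpoints it is smaller than at the centre by a definite amount.

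The final step is a perturbation argument. The precise normalisation of \(G\) near \(w_n\) is \(\Rea G(z)=a_n+b_n\,\omega(\zeta)+o(b_n)\) in \(C^2\) on compacta of the window, where \(\omega\) is the limit profile and \(b_n>0\). Here I intend to use the boundary Harnack principle near the gate to get convergence in \(C^2\) uniformly up to compact subintervals, together with the endpoint control. Given this, choose \(\eta_n\) so small that the \(C^2\)-error from the kernel convergence is much smaller than \(b_n\) times the concavity constant. Also choose it so that the \(h\)-error, whose \(C^2\) norm on the gate is controlled through Cauchy estimates by \(e^{L-x_n-\Rea G}\) times \(\eta_n^{-2}\), stays below \(b_n\) times the concavity constant as well. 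Then \(u\) is strictly concave in the interior and strictly smaller near the endpoints, which gives a unique maximiser lying in the open gate \(I_n\).

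The main obstacle is making these estimates uniform over all \(\uldelta\in\Delta_{\uleta}\), including the other coordinates, which the choice of \(\eta_n\) must not depend on. The difficulty is circular: \(\eta_n\) influences \(b_n\), and the other gates influence \(a_n\). To break this, I plan to use compactness of \(\Delta_{\uleta}\) and continuity (Observation~\ref{obs_continuous_dependence}), with \(\eta_n\) chosen inductively. The hard part is a scale-invariant lower bound for \(b_n\) relative to the \(h\)-error. For the constant case when \(x_{n+1}-x_n\ge K_0\), the local picture at each gate is equivalent up to translation, and the \(h\)-error only decreases in \(n\). A single \(\eta_0\) then works once \(\Rea w_1\) is large, and Lemma~\ref{lem:finite-order-criterion} supplies the uniform bounds.
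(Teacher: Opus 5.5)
\textbf{There is a genuine gap.} Your outline has the same shape as the paper's proof: rescale at the gate, use kernel convergence to the two-slit plane $\Omega$, find a strictly concave window profile, then perturb by $h$ using Cauchy estimates on compact subsegments, choosing $\eta_n$ inductively. But the step that carries the lemma is only asserted, and the tools you propose do not supply it. What is needed is a scale $\Lambda_{n,\uldelta}\to+\infty$ as $\eta_n\to0$, uniform in $\uldelta$ and in all other widths, such that $\Rea G(c_{n,\uldelta}+i\eta_n s)/\Lambda_{n,\uldelta}\to 2\sqrt{1/4-s^2}$. The convergence must be in $C^2$ on compact subintervals of $(-1/2,1/2)$ and in $C^0$ up to $s=\pm1/2$.

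Boundary Harnack does not apply to $\Rea G$ as it stands. $\Rea G$ takes negative values, and its boundary values on $\partial V_{\uleta}(\uldelta)$ are $-14\log_+|\Ima G|$, not $0$. Nothing in your plan makes the scale diverge, and you concede this is ``the hard part''. Observation~\ref{claim:ell} only gives $\max\Rea G\ge 4$ on each crosscut. Your pointwise bound ``of order $\Rea w_1$ near the gates'' is unjustified, and it fails near the ends of the gate, where $\Rea G$ tends to non-positive boundary values.

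The missing idea is that a short gate is a conformal bottleneck, and this is seen in the strip coordinate $\Phi=\psi^{-1}\circ G$. There the rescaled maps satisfy $F_{n,\uldelta}-b_{n,\uldelta}\to\Theta=\frac1\pi\operatorname{arsinh}(2\zeta)$ with $b_{n,\uldelta}\to+\infty$ uniformly: the narrow gate pushes its own image in $S$ off to $+\infty$. Corollary~\ref{cor:psi_large_real} then turns this into $\Rea G/\psi(b_{n,\uldelta})\to 2\sqrt{1/4-s^2}$ with scale $\psi(b_{n,\uldelta})\to\infty$. That single step gives three things at once: the profile; a main term $e^{c\psi(b_{n,\uldelta})}$ that swamps the fixed error $Me^{-x_n}$; and harmless logarithmic boundary values.

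Your harmonic-function route can still be used for the profile alone. The positive harmonic functions on $\Omega$ vanishing on the slits are exactly the non-negative combinations of $\Rea(\pm 2\zeta+\sqrt{4\zeta^2+1})$, and both restrict to $\sqrt{1-4s^2}$ on the window. You would still need the divergent scale, though.

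\textbf{Uniformity.} You assert that the kernel convergence is uniform in the remaining parameters, but then propose to get uniformity from compactness of $\Delta_{\uleta}$ and continuity. That does not work. When $\eta_n$ is chosen, the later widths $\eta_m$, $m>n$, are not yet fixed. Letting them shrink takes you out of the class of models, since a closing gate cuts off the end at $+\infty$, so there is no compact family over which to take a uniform bound.

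The paper instead uses locality. Once $(R+3/2)\eta_n<d_n$, the disc $D(c_{n,\uldelta},R\eta_n)$ meets only the $n$-th slit, whatever the other parameters are. Hence the rescaled domain agrees with $\Omega$ on $D(0,R)$ for all such configurations, and every estimate is uniform in $\uldelta$ and in the later widths.

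\textbf{Endpoints.} At the two endpoints of $\overline{I_{n,\uleta}(\uldelta)}$ you have $e^z\notin\exp(V_{\uleta}(\uldelta))$. So $f=h$ there, and your representation $\log|e^{G}+h|$ is not available. These points have to be excluded directly, by comparing $|f|\le Me^{-x_n}$ with the large interior value, as the paper does.
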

\begin{proof}
	We choose the numbers \(\eta_n\), and hence the widths of the gates, one at a time. Fix \(n\), and suppose that the previous
	gates have already been chosen. We first choose \(\eta_n\) small enough that
	\(\overline{D(w_n,20\eta_n)}\subset U\), that this disc is disjoint from all
	the other such discs already chosen, and that it meets no line
	\(\{\Rea z=x_m\}\), \(m\ne n\). We shall decrease \(\eta_n\) finitely many
	times below. For \(\uldelta\in\Delta_{\uleta}\), let
	$
	c_{n,\uldelta}
	=
	x_n+i(y_n-3\eta_n/2+\delta_n),
	$
	so that
	$
	I_{n,\uleta}(\uldelta)
	=
	\{c_{n,\uldelta}+i\eta_ns:|s|<1/2\}.
	$
	Set
	$
	\Omega_{n,\uldelta}
	=
	\eta_n^{-1}(V_{\uleta}(\uldelta)-c_{n,\uldelta}).
	$
	Let \(S=\{z:|\Ima z|<1/2\}\), let \(\psi:S\to H\) be the normalised map from
	Lemma~\ref{lem:psi_basic_estimates}, and write
	$
	\Phi_{\uleta,\uldelta}
	=
	\psi^{-1}\circ G_{\uleta,\uldelta}
	:
	V_{\uleta}(\uldelta)\to S .
	$
	Define
	\[
	F_{n,\uldelta}(\zeta)
	=
	\Phi_{\uleta,\uldelta}(c_{n,\uldelta}+\eta_n\zeta),
	\qquad
	\zeta\in\Omega_{n,\uldelta}.
	\]
	Thus \(F_{n,\uldelta}\) is the strip coordinate near the \(n\)-th gate,
	written in the rescaled variable. In this variable the gate is the fixed
	segment \(\{is:|s|<1/2\}\), and, as \(\eta_n\to0\), the rescaled domains
	\(\Omega_{n,\uldelta}\) converge in the Carath\'eodory kernel sense, with base point \(0\), to the two-sided slit plane
	\[
	\Omega
	=
	\C\setminus i\bigl((-\infty,-1/2]\cup[1/2,\infty)\bigr).
	\]
	Indeed, for the fixed \(n\), put
		\(d_n\defeq\min\{\dist(w_n,\partial U),\inf_{m\ne n}|x_m-x_n|\}>0\).
		Fix \(R>0\). Since \(|c_{n,\uldelta}-w_n|\le 3\eta_n/2\), the condition
		\((R+3/2)\eta_n<d_n\) implies, uniformly in
		\(\uldelta\in\Delta_{\uleta}\), that \(D(c_{n,\uldelta},R\eta_n)\subset U\)
		and that this disc meets no line \(\{\Rea z=x_m\}\), \(m\ne n\). Hence, after
		rescaling by \(z\mapsto (z-c_{n,\uldelta})/\eta_n\), the only removed part
		visible in \(D(0,R)\) is the \(n\)-th slit, and therefore
		\(\Omega_{n,\uldelta}\cap D(0,R)=\Omega\cap D(0,R)\) for all sufficiently
		small \(\eta_n\), uniformly in \(\uldelta\). Since \(R\) was arbitrary, the
		claimed Carath\'eodory kernel convergence follows.
	
	By Carath\'eodory kernel convergence for the two-sided slit, there are real
	numbers \(b_{n,\uldelta}\), tending to \(+\infty\) as \(\eta_n\to0\),
	uniformly for \(\uldelta\in\Delta_{\uleta}\), such that
	$
	F_{n,\uldelta}-b_{n,\uldelta}
	\longrightarrow
	\Theta
	$
	locally uniformly in \(\Omega\), with convergence of derivatives, where
	\[
	\Theta(\zeta)=
	\frac1\pi\operatorname{arsinh}(2\zeta)
	=
	\frac1\pi
	\log\left(2\left(\zeta+\sqrt{\zeta^2+1/4}\right)\right).
	\]
	Here \(\operatorname{arsinh}\) denotes the branch that is real on the real axis, the square-root branch is positive on the real axis, and the logarithm is real on the positive real axis. This map satisfies
	\[
	\Theta(0)=0,\qquad \Theta(\R)=\R,
	\qquad
	\Theta(x)\to\pm\infty
	\quad\text{as }x\to\pm\infty .
	\]
	
	On the gate of \(\Omega\), if \(\zeta=is\) and \(|s|<1/2\), then
	$
	\Theta(is)
	=
	\frac{i}{\pi}\arcsin(2s).
	$
	Consequently, as \(\eta_n\to0\), on compact subsegments of the gate,
	$
	F_{n,\uldelta}(is)
	=
	b_{n,\uldelta}
	+
	\frac{i}{\pi}\arcsin(2s)
	+ o(1),
	$
	uniformly in \(\uldelta\), with convergence of derivatives.
	
	Denote
	$
	\tau(s)\defeq \frac1\pi\arcsin(2s).
	$
	Since \(G_{\uleta,\uldelta}=\psi\circ\Phi_{\uleta,\uldelta}\), we have
	$
	\Rea G_{\uleta,\uldelta}(c_{n,\uldelta}+i\eta_ns)
	=
	\Rea \psi(F_{n,\uldelta}(is)).
	$
	By Corollary~\ref{cor:psi_large_real}, applied with
	\(b=b_{n,\uldelta}\), and by the preceding convergence of
	\(F_{n,\uldelta}-b_{n,\uldelta}\), we obtain
	\[
	P_{n,\uldelta}(s)
	\defeq
	\frac{
		\Rea G_{\uleta,\uldelta}(c_{n,\uldelta}+i\eta_ns)
	}{
		\psi(b_{n,\uldelta})
	}
	\longrightarrow
	\cos(\pi\tau(s))=
	\cos(\arcsin(2s))
	=
	2\sqrt{1/4-s^2}\eqdef	p(s)
	\]
	in \(C^2\) on compact subintervals of \((-1/2,1/2)\), uniformly in
	\(\uldelta\).

We also need a \(C^0\) version of this convergence at the two endpoints of
		the gate. 	We explain the upper endpoint; the lower one is analogous. In a
		neighbourhood of \(i/2\), use the coordinate
		\(\xi=\sqrt{-i(\zeta-i/2)}\), with the branch chosen so that the slit
		\(i[1/2,\infty)\) is opened into the boundary of a half-disc. In this
		coordinate the endpoint becomes a regular analytic boundary point, and, for
		small \(\eta_n\), the rescaled domains agree with the model domain near this
		point, uniformly in \(\uldelta\). Hence the boundary form of the
		Carath\'eodory convergence theorem gives uniform convergence of
		\(F_{n,\uldelta}-b_{n,\uldelta}\) to \(\Theta\) up to the endpoint. Applying
		the same argument at \(-i/2\), we obtain \(P_{n,\uldelta}\to p\) uniformly on
		the closed gate, where \(p(\pm1/2)=0\). Choose \(\rho>0\) so small that
	\(p(s)<1/4\) whenever \(1/2-\rho<|s|<1/2\). After decreasing \(\eta_n\),
	we therefore have
	\[
	P_{n,\uldelta}(s)<\frac12
	\qquad
	\text{whenever }1/2-\rho<|s|<1/2,
	\]
	uniformly in \(\uldelta\). It remains to look at the compact interval
	\[
	K_\rho\defeq[-1/2+\rho,1/2-\rho].
	\]
	There the convergence \(P_{n,\uldelta}\to p\) is in \(C^2\). Since \(p\) has
	a unique non-degenerate maximum at \(0\), the functions
	\(P_{n,\uldelta}\) also have exactly one maximum on \(K_\rho\), after
	decreasing \(\eta_n\). Moreover, since \(p(0)=1\), this maximum has value
	larger than \(3/4\), after decreasing \(\eta_n\). The endpoint estimate
	therefore rules out maxima in the two endpoint regions. Hence
	\(P_{n,\uldelta}\), and therefore
	$
	s\mapsto
	\Rea G_{\uleta,\uldelta}(c_{n,\uldelta}+i\eta_ns),
	$
	has a unique maximum on \((-1/2,1/2)\). Since \(r\mapsto e^r\) is strictly
	increasing, the same is true for
	$
	s\mapsto
	|g_{\uleta,\uldelta}(e^{c_{n,\uldelta}+i\eta_ns})|.
	$
	
	To conclude the proof, we must pass from the model function to the entire
	function. By Theorem~\ref{thm_approx1.7}, applied in logarithmic
	coordinates,
	$
	|h_{\uleta,\uldelta}(e^z)|\le Me^{-\Rea z}
	$
	whenever \(\Rea z>L\). Since the gate lies on the line \(\Rea z=x_n\), and
	since \(x_n-1>L\), this gives in particular
	that $
	|h_{\uleta,\uldelta}(e^{c_{n,\uldelta}+i\eta_ns})|
	\le Me^{-x_n},$ whenever $-1/2\le s\le1/2$. Let
	\[
	u(s)=\Rea G_{\uleta,\uldelta}(c_{n,\uldelta}+i\eta_ns).
	\]
	The estimates used below are consequences of the preceding
		\(C^2\)-convergence \(P_{n,\uldelta}\to p\), together with the definition of
		\(P_{n,\uldelta}\). Choose \(0<\alpha<1/2-\rho\) and
		\(d,\kappa,c>0\), independent of \(\uldelta\), such that, for the limit
		function \(p\), we have \(p''\le -2\kappa\) and \(p\ge 2c\) on
		\([-\alpha,\alpha]\), while \(p\le1-3d\) on
		\(K_\rho\setminus(-\alpha,\alpha)\). Combining this with the endpoint
		estimate above, and decreasing \(\eta_n\) if necessary, we obtain uniformly
		in \(\uldelta\) that the unique maximum point \(s_*=s_*(\uldelta)\) lies in
		\((-\alpha,\alpha)\), that
		\(P_{n,\uldelta}(s)\le P_{n,\uldelta}(s_*)-d\) for
		\(\alpha\le |s|<1/2\), that \(P_{n,\uldelta}''(s)\le-\kappa\) for
		\(|s|\le\alpha\), and that \(P_{n,\uldelta}(s)\ge c\) for
		\(|s|\le\alpha\). Since \(u\) is \(\psi(b_{n,\uldelta})\) times
		\(P_{n,\uldelta}\) by the definition of \(P_{n,\uldelta}\), these give
		\[
			\begin{aligned}
				u(s)&\le u(s_*)-d\psi(b_{n,\uldelta})
				&&\text{for } \alpha\le |s|<1/2,\\
				u''(s)&\le -\kappa\psi(b_{n,\uldelta})
				&&\text{for } |s|\le\alpha,\\
				u(s)&\ge c\psi(b_{n,\uldelta})
				&&\text{for } |s|\le\alpha .
		\end{aligned}
		\]
	Let
	\(r_\alpha\defeq(1/2-\alpha)/2\). After decreasing $\eta_n$, the points $c_{n,\uldelta}+i\eta_ns$, with $\operatorname{dist}(s,[-\alpha,\alpha])\le r_\alpha$, belong to $V_{\uleta}(\uldelta)$ and have real part at least $x_{n-1}$, uniformly in $\uldelta$. Hence Cauchy's estimates applied to the holomorphic function
	\(s\mapsto h_{\uleta,\uldelta}(e^{c_{n,\uldelta}+i\eta_ns})\) give, for
	\(k=0,1,2\),
	\[
	\left|
	\frac{d^k}{ds^k}
	h_{\uleta,\uldelta}
	\bigl(e^{c_{n,\uldelta}+i\eta_ns}\bigr)
	\right|
	\le
	C_\alpha Me^{-x_n},
	\qquad |s|\le\alpha,
	\]
	with \(C_\alpha\) independent of \(\uldelta\). 
	On \([-\alpha,\alpha]\), we have
	\[
	\begin{aligned}
		f_{\uleta,\uldelta}
		\bigl(e^{c_{n,\uldelta}+i\eta_ns}\bigr)
		&=
		e^{G_{\uleta,\uldelta}(c_{n,\uldelta}+i\eta_ns)}
		\left(
		1+
		h_{\uleta,\uldelta}
		\bigl(e^{c_{n,\uldelta}+i\eta_ns}\bigr)
		e^{-G_{\uleta,\uldelta}(c_{n,\uldelta}+i\eta_ns)}
		\right).
	\end{aligned}
	\]
	The term in parentheses is a \(C^2\)-small perturbation of \(1\), uniformly
	in \(\uldelta\), as \(\eta_n\to0\). Indeed, the \(h\)-term and its first two
	\(s\)-derivatives are \(O(Me^{-x_n})\), while on \([-\alpha,\alpha]\)
	$
	\Rea G_{\uleta,\uldelta}(c_{n,\uldelta}+i\eta_ns)
	\ge
	c\psi(b_{n,\uldelta})
	$
	and
	\[
	\frac{d^j}{ds^j}
	G_{\uleta,\uldelta}(c_{n,\uldelta}+i\eta_ns)
	=
	O(\psi(b_{n,\uldelta}))
	\qquad (j=1,2).
	\]
	Thus differentiating the factor
	\(e^{-G_{\uleta,\uldelta}(c_{n,\uldelta}+i\eta_ns)}\) produces only polynomial
	factors in \(\psi(b_{n,\uldelta})\), whereas
	$
	e^{-\Rea G_{\uleta,\uldelta}(c_{n,\uldelta}+i\eta_ns)}
	\le
	e^{-c\psi(b_{n,\uldelta})}.
	$
	Since \(\psi(b_{n,\uldelta})\to+\infty\), the perturbation tends to \(0\) in
	\(C^2([-\alpha,\alpha])\), uniformly in \(\uldelta\). Therefore
	$
	s\mapsto
	\log\left|
	f_{\uleta,\uldelta}
	\bigl(e^{c_{n,\uldelta}+i\eta_ns}\bigr)
	\right|
	$
	is a \(C^2\)-small perturbation of \(u(s)\) on \([-\alpha,\alpha]\). Since
	\(u''\le-\kappa\psi(b_{n,\uldelta})\) there, this function is strictly
	concave after decreasing \(\eta_n\). Hence
	$
	s\mapsto
	\left|
	f_{\uleta,\uldelta}
	\bigl(e^{c_{n,\uldelta}+i\eta_ns}\bigr)
	\right|
	$
	has at most one maximum in \((-\alpha,\alpha)\).
	
	It remains to rule out points outside \((-\alpha,\alpha)\). For
	\(\alpha\le |s|<1/2\),
	\[
	\left|
	e^{G_{\uleta,\uldelta}(c_{n,\uldelta}+i\eta_ns)}
	\right|
	\le
	e^{-d\psi(b_{n,\uldelta})}
	\left|
	e^{G_{\uleta,\uldelta}(c_{n,\uldelta}+i\eta_ns_*)}
	\right|.
	\]
	Moreover
	\[
	\left|
	e^{G_{\uleta,\uldelta}(c_{n,\uldelta}+i\eta_ns_*)}
	\right|
	=
	e^{u(s_*)}
	\ge
	e^{c\psi(b_{n,\uldelta})}.
	\]
	Thus, since the additive error coming from \(h_{\uleta,\uldelta}\) is bounded
	by \(CMe^{-x_n}\), after decreasing \(\eta_n\) no point with
	\(\alpha\le |s|<1/2\) can give a value as large as the value at \(s_*\).
	Consequently the maximum on the open gate is attained in
	\((-\alpha,\alpha)\), where it is unique.
	
	Finally, let \(z\) be one of the two endpoints of
	\(\overline{I_{n,\uleta}(\uldelta)}\). Then
	\(z\in L_{n,\uleta}(\delta_n)\), and hence
	\(e^z\notin\exp(V_{\uleta}(\uldelta))\). By the definition
	\eqref{eq_f} of the approximating function, the model term is absent at
	\(e^z\), and therefore
	\[
	|f_{\uleta,\uldelta}(e^z)|
	=
	|h_{\uleta,\uldelta}(e^z)|
	\le
	Me^{-x_n}.
	\]
	At the interior point corresponding to \(s_*\), however,
	\[
	\left|
	f_{\uleta,\uldelta}
	\bigl(e^{c_{n,\uldelta}+i\eta_ns_*}\bigr)
	\right|
	\ge
	e^{c\psi(b_{n,\uldelta})}
	-
	CMe^{-x_n}.
	\]
	After decreasing \(\eta_n\), this is larger than \(Me^{-x_n}\). Therefore
	neither endpoint can be a maximiser. Since the open gate has a unique
	maximiser, the closed gate
	\(\overline{I_{n,\uleta}(\uldelta)}\) also has a unique maximiser, and this
	maximiser lies in \(I_{n,\uleta}(\uldelta)\).
	
	Proceeding inductively over \(n\), and decreasing each \(\eta_n\) as above,
	gives the required admissible sequence.
	
	It remains to justify the final assertion. Suppose that \(x_{n+1}-x_n\ge K_0>0\) for all \(n\). Since \(y_n\in(-\pi,\pi]\), the slopes of the line segments forming \(\gamma\) are bounded in terms of \(K_0\). After the initial rescaling, the same is true, with a uniform bound, for the two sides of the initial trapezoid \(T\). Hence there is \(r_0>0\) such that \(D(w_n,r_0)\subset U\) for every \(n\). Choose \(\eta_0>0\) so small that $0<\eta_0<1/10$, \(20\eta_0<r_0\) and \(40\eta_0<K_0\). Then the constant sequence \(\eta_n=\eta_0\) is admissible.
	
	We claim that, after decreasing \(\eta_0\) if necessary, the uniqueness
	argument above holds uniformly for all \(n\) and all
	\(\uldelta\in\prod_n[0,3\eta_0]\). Fix \(R>0\). Since
	\(|c_{n,\uldelta}-w_n|\le3\eta_0/2\), if \(\eta_0\) is sufficiently small,
	depending only on \(R,r_0,K_0\), then
	\(D(c_{n,\uldelta},R\eta_0)\subset U\) and this disc meets no line
	\(\{\Rea z=x_m\}\) with \(m\ne n\), for every \(n\) and every
	\(\uldelta\). Hence, in the rescaled coordinate
	\(\zeta=(z-c_{n,\uldelta})/\eta_0\), the only boundary pieces visible in
	\(D(0,R)\) are the two parts of the \(n\)-th slit. Thus
	\[
	\eta_0^{-1}\bigl(V_{\uleta}(\uldelta)-c_{n,\uldelta}\bigr)
	\longrightarrow
	\Omega=\C\setminus i\bigl((-\infty,-1/2]\cup[1/2,\infty)\bigr)
	\]
	in the Carath\'eodory kernel sense, uniformly in \(n\) and
	\(\uldelta\). The corresponding convergence of
	\(F_{n,\uldelta}-b_{n,\uldelta}\) to \(\Theta\), including convergence of
	derivatives on compact subsets of \(\Omega\), is therefore also uniform in
	\(n\) and \(\uldelta\). Moreover \(b_{n,\uldelta}\to+\infty\) uniformly as
	\(\eta_0\to0\), by the same argument as in the preceding single-gate case. Since \(x_n\ge x_1>L+3\), the approximation error
	\(Me^{-x_n}\) is uniformly bounded by \(Me^{-x_1}\). Hence all estimates in
	the uniqueness proof hold simultaneously for every gate once \(\eta_0\) is
	sufficiently small. This proves the constant-width assertion.
\end{proof}

From now on, fix an admissible sequence \(\uleta\) given by Lemma~\ref{lem:narrow_gate_unique}, and suppress \(\uleta\) from the notation. Thus we write \(\Delta\), \(L_n(\delta_n)\), \(V(\uldelta)\), \(I_n(\uldelta)\), \(G_{\uldelta}\), \(g_{\uldelta}\), \(f_{\uldelta}\), and \(h_{\uldelta}\).

It follows from Observation~\ref{claim:ell} that if \(\beta\subset V(\uldelta)\setminus R_0\) is a vertical segment in \(V(\uldelta)\), with endpoints in \(\partial V(\uldelta)\), which separates \(-1\) from infinity, then
\begin{equation}\label{eq:G}
	\max_{z\in\beta}\Rea G_{\uldelta}(z)\ge4.
\end{equation}

\begin{proposition}\label{prop_MaxInV}
	For every \(\uldelta\in\Delta\),
	\[
	\Mlog(f_{\uldelta})\cap\{x+iy:x>L+1\}
	\subset \bigcup_{k\in\Z}\bigl(V(\uldelta)+2k\pi i\bigr).
	\]
\end{proposition}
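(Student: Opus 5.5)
Fix \(\uldelta\in\Delta\) and \(t>L+1\). The plan is to compare the values of \(|f_{\uldelta}(e^z)|\) on the vertical line \(\Rea z=t\) inside and outside the translates of \(V(\uldelta)\). Let \(z\) lie on this line but outside \(\bigcup_k(V(\uldelta)+2k\pi i)\). Then \(e^z\notin\exp(V(\uldelta))\), so by \eqref{eq_f} the model term is absent. Theorem~\ref{thm_approx1.7} then gives
\[
|f_{\uldelta}(e^z)|=|h_{\uldelta}(e^z)|\le Me^{-t}<M e^{-1-L}<1,
\]
since \(L=\log M\). It therefore suffices to exhibit, on the same line, a point \(w\in V(\uldelta)\) with \(|f_{\uldelta}(e^w)|\ge 1\). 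The maximum over the line then exceeds every value taken off the translates of \(V(\uldelta)\), so no such \(z\) can lie in \(\Mlog(f_{\uldelta})\).

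To produce \(w\), I will use the lower bound \eqref{eq:G}. The first step is to check that, for \(t>L+1\), the set \(V(\uldelta)\cap\{\Rea z=t\}\) contains a vertical segment \(\beta\) with endpoints in \(\partial V(\uldelta)\) that separates \(-1\) from infinity. For \(t\neq x_n\), the domain \(U\) meets the line in a single interval, because the strip around \(\gamma\) and the trapezoid \(T\) are vertically convex. Removing slits only happens on the lines \(\Rea z=x_n\), so this interval is a crosscut of \(V(\uldelta)\) and separates the left part, containing \(-1\), from the right-hand end. For \(t=x_n\), the intersection is exactly the gate \(I_n(\uldelta)\), which is again a crosscut with endpoints on the slit, and it separates because everything else on that line has been removed. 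In both cases \(\beta\) lies outside \(R_0\), since \(t>0\).

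Given \(\beta\), \eqref{eq:G} provides \(w\in\beta\) with \(\Rea G_{\uldelta}(w)\ge4\), and hence \(|g_{\uldelta}(e^w)|=e^{\Rea G_{\uldelta}(w)}\ge e^4\). Combining this with the error bound,
\[
|f_{\uldelta}(e^w)|\ge e^4-Me^{-t}\ge e^4-e^{-1}>1.
\]
This completes the comparison. Note that this argument does not need the first-paragraph bound to be below \(1\) exactly; it only needs \(Me^{-t}<e^4-Me^{-t}\), which holds comfortably.

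The main obstacle I expect is the geometric step: confirming that every vertical line with \(t>L+1\) meets \(V(\uldelta)\) in a genuine separating crosscut. This requires some care in two places. One is the junction at \(\Rea z=x_1\), where \(T\) meets the strip. The other concerns lines \(\Rea z=t\) with \(x_1\le t\): I need the strip \(\{|y-\vartheta(x)|\le1\}\) to meet each such line in one interval, which is immediate from its definition as a graph neighbourhood. I also need to ensure that the translates do not overlap, which is already guaranteed by the construction. The reduction of \eqref{eq:G} to Observation~\ref{claim:ell} is taken as given.
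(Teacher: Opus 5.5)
Your proof is correct and follows the paper's argument. Off the translates of \(V(\uldelta)\) you bound \(|f_{\uldelta}(e^z)|=|h_{\uldelta}(e^z)|<e^{-1}\), and on the slice \(V(\uldelta)\cap\{\Rea z=t\}\) you use \eqref{eq:G} to get a point \(w\) with \(|f_{\uldelta}(e^w)|\ge e^4-e^{-1}\). Your check that each slice with \(t>L+1\) is a single separating crosscut (a full trapezoid or tube slice, or the gate \(I_n(\uldelta)\) when \(t=x_n\)) fills in a step that the paper only asserts.
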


\begin{proof}
	By the choice of \(L\) in \eqref{eq_L},
	\begin{equation}\label{eq:h}
		|h_{\uldelta}(e^z)|<e^{-1},\qquad \Rea z>L+1.
	\end{equation}
	If \(z\notin\bigcup_{k\in\Z}(V(\uldelta)+2k\pi i)\) and \(\Rea z>L+1\), then \(e^z\notin\exp(V(\uldelta))\), and hence \(|f_{\uldelta}(e^z)|=|h_{\uldelta}(e^z)|<e^{-1}\). On the other hand, fix \(t>L+1\). The cross-section \(V(\uldelta)\cap\{\Rea z=t\}\) is a vertical crosscut separating \(-1\) from infinity. Hence, by \eqref{eq:gG}, \eqref{eq:G} and \eqref{eq:h},
	\[
	\max_{\substack{z\in V(\uldelta)\\ \Rea z=t}} |f_{\uldelta}(e^z)|
	\ge
	\max_{\substack{z\in V(\uldelta)\\ \Rea z=t}} e^{\Rea G_{\uldelta}(z)}-e^{-1}
	\ge e^4-e^{-1}>e^{-1}
	\]
	and the result follows.
\end{proof}

For \(n\in\N\) and \(\uldelta\in\Delta\), let \(a_n(\uldelta)\) be the unique maximiser of \(z\mapsto |f_{\uldelta}(e^z)|\) on \(\overline{I_n(\uldelta)}\), whose existence and uniqueness are given by Lemma~\ref{lem:narrow_gate_unique}. Define
\begin{equation}\label{eq:phi_n}
	\phi_n(\uldelta)\defeq \Ima a_n(\uldelta)-y_n.
\end{equation}

\begin{lemma}\label{lem_props}
	For each \(n\in\N\), the following hold.
	\begin{enumerate}
		\item The function \(\phi_n:\Delta\to\R\) is continuous.
		\item If \(\delta_n=0\), then \(\phi_n(\uldelta)<0\). \label{lem_props:2}
		\item If \(\delta_n=3\eta_n\), then \(\phi_n(\uldelta)>0\). \label{lem_props:3}
		\item If \(\phi_n(\uldelta)=0\), then \(w_n\in\Mlog(f_{\uldelta})\).
	\end{enumerate}
\end{lemma}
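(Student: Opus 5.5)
The plan is to dispose of items (2) and (3) directly from the geometry of the gates, to derive item (4) from Proposition~\ref{prop_MaxInV} together with periodicity, and to obtain item (1) from the continuous dependence in Observation~\ref{obs_continuous_dependence} combined with uniqueness of the maximiser from Lemma~\ref{lem:narrow_gate_unique}.

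\emph{Items (2) and (3).} By Lemma~\ref{lem:narrow_gate_unique}, \(a_n(\uldelta)\) lies in the open gate \(I_n(\uldelta)\).
\begin{itemize}
\item If \(\delta_n=0\), this gate is \(\{x_n+iy: y_n-2\eta_n<y<y_n-\eta_n\}\). Hence \(\Ima a_n(\uldelta)<y_n-\eta_n<y_n\), so \(\phi_n(\uldelta)<0\).
\item If \(\delta_n=3\eta_n\), the gate is \(\{x_n+iy:y_n+\eta_n<y<y_n+2\eta_n\}\). Hence \(\phi_n(\uldelta)>\eta_n>0\).
\end{itemize}
This gives the sign conditions needed later for Lemma~\ref{lem:shooting}, with \(\sigma_n=-1\).

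\emph{Item (4).} Suppose \(\phi_n(\uldelta)=0\). Since \(\Rea a_n(\uldelta)=x_n\), we get \(a_n(\uldelta)=w_n\). We must show that \(w_n\) maximises \(|f_{\uldelta}(e^z)|\) over the whole line \(\{\Rea z=x_n\}\).
\begin{itemize}
\item Because \(x_n\ge x_1>L+3>L+1\), Proposition~\ref{prop_MaxInV} applies. Every point of \(\Mlog(f_{\uldelta})\) on this line lies in some translate \(V(\uldelta)+2k\pi i\).
\item By construction, \(V(\uldelta)\cap\{\Rea z=x_n\}=I_n(\uldelta)\), since the rest of the cross-section of \(U\) is the removed slit \(L_n(\delta_n)\).
\item The maximum of \(|f_{\uldelta}(e^z)|\) over the line is attained, because it is a maximum over the circle \(|e^z|=e^{x_n}\) of a continuous function.
\item Using \(2\pi i\)-periodicity \eqref{eq_max_translates}, a maximiser may be taken in \(I_n(\uldelta)\) itself. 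It is therefore a maximiser over \(\overline{I_n(\uldelta)}\), hence equals \(a_n(\uldelta)=w_n\) by uniqueness.
\end{itemize}
Thus \(w_n\in\Mlog(f_{\uldelta})\).

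\emph{Item (1), the main obstacle.} First I would check that \(\uldelta\mapsto V(\uldelta)\) is continuous in the Carath\'eodory kernel topology, with base point \(-1\), for the product topology on \(\Delta\). Any compact subset of the limit domain meets only finitely many lines \(\Rea z=x_m\), since \(x_m\to\infty\). So only finitely many coordinates \(\delta_m\) are relevant on compacts, and each slit \(L_m(\delta_m)\) moves continuously with \(\delta_m\). Observation~\ref{obs_continuous_dependence} then gives \(f_{\uldelta^{(j)}}\to f_{\uldelta}\) locally uniformly whenever \(\uldelta^{(j)}\to\uldelta\).

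Next comes the argmax argument. Parametrise the closed gate as \(\overline{I_n(\uldelta)}=\{c_{n,\uldelta}+i\eta_ns:|s|\le1/2\}\); the centre \(c_{n,\uldelta}\) depends continuously on \(\delta_n\). The functions \(u_j(s)=|f_{\uldelta^{(j)}}(e^{c_{n,\uldelta^{(j)}}+i\eta_ns})|\) then converge uniformly on \([-1/2,1/2]\) to the corresponding \(u\) for \(\uldelta\). Let \(s_j\) be the maximiser of \(u_j\), and take any limit point \(s'\) of \((s_j)\). Passing to the limit in \(u_j(s_j)\ge u_j(s)\) shows that \(s'\) maximises \(u\). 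By the uniqueness in Lemma~\ref{lem:narrow_gate_unique}, \(s'\) is the maximiser \(s_*\) of \(u\), so \(s_j\to s_*\). Therefore \(a_n(\uldelta^{(j)})\to a_n(\uldelta)\).

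Since \(\Delta\) is metrizable, sequential continuity suffices, so \(\phi_n\) is continuous. The delicate point is justifying the kernel continuity in the infinite product topology, which I would handle via the finitely-many-visible-slits observation above.
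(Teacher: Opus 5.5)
Your proposal is correct and takes essentially the same route as the paper. Items (2)--(3) follow from the position of the gate; item (4) follows from Proposition~\ref{prop_MaxInV}, $2\pi i$-periodicity and uniqueness of the maximiser; and item (1) is a limit-point argument using the locally uniform convergence from Observation~\ref{obs_continuous_dependence} and the uniqueness in Lemma~\ref{lem:narrow_gate_unique}. Your parametrisation by $s$ is exactly the paper's use of the shifted points $z+i(\delta_n^{(j)}-\delta_n)$, and your extra check that $\uldelta\mapsto V(\uldelta)$ is continuous in the kernel sense, which the paper takes for granted in the Observation, is a harmless and sound addition.
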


\begin{proof}
	Fix \(n\in\N\). We first prove continuity. Let \(\uldelta^{(j)}\to\uldelta\).
	We show that \(a_n(\uldelta^{(j)})\to a_n(\uldelta)\); since
	\(\Delta\) is metrizable, this proves continuity. Put
	\(a_j=a_n(\uldelta^{(j)})\). The points \(a_j\) all lie in the fixed
	compact segment \(\{x_n+iy:y_n-2\eta_n\le y\le y_n+2\eta_n\}\).
	Hence, after passing to a subsequence, \(a_j\to a\). Since
	\(\delta_n^{(j)}\to\delta_n\), the limit satisfies
	\(a\in\overline{I_n(\uldelta)}\). Now take any \(z\in\overline{I_n(\uldelta)}\). The shifted point
	\(z_j\defeq z+i(\delta_n^{(j)}-\delta_n)\) belongs to
	\(\overline{I_n(\uldelta^{(j)})}\) and tends to \(z\). Maximality of
	\(a_j\) gives
	\[
	|f_{\uldelta^{(j)}}(e^{a_j})|\ge |f_{\uldelta^{(j)}}(e^{z_j})|.
	\]
	By Observation~\ref{obs_continuous_dependence},
	\(f_{\uldelta^{(j)}}\to f_{\uldelta}\) locally uniformly. Letting
	\(j\to\infty\) gives \(|f_{\uldelta}(e^a)|\ge |f_{\uldelta}(e^z)|\).
	Since \(z\) was arbitrary, \(a\) is a maximiser on
	\(\overline{I_n(\uldelta)}\), and the uniqueness from
	Lemma~\ref{lem:narrow_gate_unique} gives \(a=a_n(\uldelta)\). Thus every
	subsequence of \(a_n(\uldelta^{(j)})\) has a further subsequence converging to
	\(a_n(\uldelta)\), and therefore \(a_n(\uldelta^{(j)})\to a_n(\uldelta)\).
	This proves the continuity of \(\phi_n\).
	
	If \(\delta_n=0\), then \(I_n(\uldelta)\subset\{z:\Ima z<y_n\}\), so \(\Ima a_n(\uldelta)<y_n\), and hence \(\phi_n(\uldelta)<0\). If \(\delta_n=3\eta_n\), then \(I_n(\uldelta)\subset\{z:\Ima z>y_n\}\), so \(\phi_n(\uldelta)>0\). This proves \eqref{lem_props:2} and \eqref{lem_props:3}.
	
	Finally, assume that \(\phi_n(\uldelta)=0\). Then \(\Ima a_n(\uldelta)=y_n\), while \(a_n(\uldelta)\in I_n(\uldelta)\) has real part \(x_n\). Hence \(a_n(\uldelta)=w_n\). Since \(a_n(\uldelta)\) maximises \(|f_{\uldelta}(e^z)|\) on the base tract slice \(I_n(\uldelta)\), Proposition~\ref{prop_MaxInV} and \(2\pi i\)-periodicity imply that \(w_n\in\Mlog(f_{\uldelta})\).
\end{proof}

\begin{corollary}\label{cor:choose_delta}
	There exists \(\uldelta\in\Delta\) such that \(w_n\in\Mlog(f_{\uldelta})\) for all \(n\in\N\).
\end{corollary}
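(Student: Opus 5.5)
The plan is to apply the shooting lemma, Lemma~\ref{lem:shooting}, to the functions \(\phi_n\) defined in \eqref{eq:phi_n}, with \(A=\N\). The parameter space is \(\Delta=\Delta_{\uleta}=\prod_{n\in\N}[0,3\eta_n]\), which has the form \eqref{eq_Delta} with \(a_n=0<b_n=3\eta_n\). By Lemma~\ref{lem_props}(1), each \(\phi_n\colon\Delta\to\R\) is continuous with respect to the product topology. Here we use the continuous dependence of \(f_{\uldelta}\) on \(\uldelta\) from Observation~\ref{obs_continuous_dependence}, which in turn relies on the domains \(V(\uldelta)\) varying continuously in the kernel sense.

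To check the sign conditions, take \(\sigma_n\defeq -1\) for every \(n\).

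\begin{itemize}
\item By Lemma~\ref{lem_props}\eqref{lem_props:2}, if \(\delta_n=a_n=0\) then \(\phi_n(\uldelta)<0\), so \(\sigma_n\phi_n(\uldelta)>0\ge0\).
\item By Lemma~\ref{lem_props}\eqref{lem_props:3}, if \(\delta_n=b_n=3\eta_n\) then \(\phi_n(\uldelta)>0\), so \(\sigma_n\phi_n(\uldelta)<0\le0\).
\end{itemize}

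Both inequalities hold for all \(\uldelta\in\Delta\), whatever the other coordinates are. This is exactly the hypothesis of Lemma~\ref{lem:shooting}.

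Lemma~\ref{lem:shooting} with \(A=\N\) then produces \(\uldelta\in\Delta\) with \(\phi_n(\uldelta)=0\) for every \(n\in\N\). Lemma~\ref{lem_props}(4) converts each of these equalities into \(w_n\in\Mlog(f_{\uldelta})\), which proves the corollary.

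The only point needing care is that continuity of each \(\phi_n\) holds on the whole infinite product \(\Delta\), not just on finite-dimensional slices. The compactness step in Lemma~\ref{lem:shooting} uses this. Lemma~\ref{lem_props}(1) already gives it, since its proof works with arbitrary convergent sequences in \(\Delta\) and \(\Delta\) is metrizable, so no further work is needed.
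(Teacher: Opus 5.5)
Your proposal is correct and is essentially the paper's argument. The paper applies Lemma~\ref{lem:shooting} to $-\phi_n$ with $\sigma_n=1$, $a_n=0$, $b_n=3\eta_n$ and $A=\N$; this is equivalent to your choice of $\phi_n$ with $\sigma_n=-1$, and the paper then concludes via Lemma~\ref{lem_props}(4) exactly as you do.
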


\begin{proof}
	Apply Lemma \(\ref{lem:shooting}\) to the family $\widetilde\phi_n=-\phi_n$, with $A=\mathbb N, a_n=0, b_n=3\eta_n$, and $\sigma_n=1$ for all $n$. By Lemma \(\ref{lem_props}\)(2),(3), the required sign conditions hold. Hence there is \(\uldelta\in\Delta\) such that \(\phi_n(\uldelta)=0\) for all $n$. Lemma \(\ref{lem_props}\)(4) then gives \(w_n\in\Mlog(f_{\uldelta})\) for all~$n$.
\end{proof}

\begin{proposition}\label{prop:finite_order}
	Suppose that there exists $K_0>0$ such that
	\begin{equation}\label{eq_separation}
		\Rea w_{n+1}\ge \Rea w_n + K_0 \qquad \text{for all }n\in\N.
	\end{equation}
	Then the function $f=f_{\uldelta}$ provided by Corollary~\ref{cor:choose_delta}
	can be chosen to have finite order.
\end{proposition}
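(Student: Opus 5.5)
Under the separation hypothesis \eqref{eq_separation}, the plan is to apply Lemma~\ref{lem:finite-order-criterion} to the model \((G_{\uldelta},V(\uldelta),H)\), where \(\uldelta\) is given by Corollary~\ref{cor:choose_delta}. By the final assertion of Lemma~\ref{lem:narrow_gate_unique}, we first fix the admissible sequence to be constant, \(\eta_n=\eta_0\). Then every gate \(I_n(\uldelta)\) has the same length \(\eta_0\). It is centred within \(2\eta_0\) of \(w_n\), and the disc \(D(w_n,r_0)\) lies in \(U\), where \(r_0\) does not depend on \(n\). Lemma~\ref{lem:finite-order-criterion} requires two hypotheses, and both must hold with constants independent of \(\uldelta\). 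It suffices to check them for the particular \(\uldelta\), but uniform control costs nothing extra.

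\emph{Connected cross-sections.} Away from the lines \(\Rea z=x_n\), the domain \(V(\uldelta)\) agrees with \(U\). For \(t>x_1\), the cross-section \(U\cap\{\Rea z=t\}\) is a single vertical segment of length \(2\), namely \(\{t+iy:|y-\vartheta(t)|<1\}\). Hence every \(t'\ge x_1\) with \(t'\notin\{x_n\}\) gives a connected cross-section. Take \(B>2\log 32\) and \(C=1\), say. Because the \(x_n\) form a discrete set, the interval \([t+B,t+B+1]\) contains points not in \(\{x_n\}\), so the second hypothesis holds with \(t_0=x_1\).

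\emph{Hyperbolic distance growing linearly.} For \(t>t_0\), I take \(\zeta=t+i\vartheta(t)\) when \(t\notin\{x_n\}\), and \(\zeta=w_n\) when \(t=x_n\); by the choice of the gates, \(w_n\in V(\uldelta)\) at the shooting parameter. I then estimate \(d_V(-1,\zeta)\) by integrating the upper bound in \eqref{eq:hypest} along an explicit path. The path crosses \(R_0\) and the trapezoid \(T\), which contributes a fixed constant. It then runs along the centre curve \(\gamma\) and detours through each gate it meets. Between consecutive lines \(\Rea z=x_n\), the curve \(\gamma\) has slope at most \(2\pi/K_0\), so the tube \(\{|y-\vartheta(x)|<1\}\) has inner distance to \(\partial U\) bounded below by a constant \(c_0>0\) depending only on \(K_0\). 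Near the \(n\)-th slit, the path leaves \(\gamma\), moves within \(D(w_n,r_0)\) to the gate, and passes through the gate's middle portion. Throughout this detour the distance to the boundary stays comparable to \(\eta_0\), except on segments approaching the gate, where the density is at most \(2/\dist\). The resulting cost is a constant \(c_1(\eta_0,r_0)\) per gate: going straight through the middle third of the gate, at distance at least \(\eta_0/3\) from the slit tips, costs \(O(1)\), and the approach within \(D(w_n,r_0)\) costs \(O(\log(r_0/\eta_0))\). The number of gates crossed before real part \(t\) is at most \((t-x_1)/K_0+1\). Putting these together gives \(d_V(-1,\zeta)\le c_2+(c_0^{-1}+c_1/K_0)\,t\le At\) for \(t>t_0\).

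Lemma~\ref{lem:finite-order-criterion} then shows that \(f_{\uldelta}\) has finite order. I expect the main obstacle to be the per-gate estimate. The difficulty is making sure the detour near each slit really costs only a bounded amount uniformly in \(n\) and \(\delta_n\). This is exactly where the uniform width \(\eta_0\) and the uniform radius \(r_0\) from the constant-width part of Lemma~\ref{lem:narrow_gate_unique} are needed, because otherwise the gates could narrow and the hyperbolic cost would become unbounded. The separation \(K_0\) is what converts the count of gates into linear growth. Finally, the initial rescaling preserves finite order, so the conclusion transfers to the original sequence.
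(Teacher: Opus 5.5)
Your proposal is correct and follows essentially the paper's route: you fix the constant gate width $\eta_0$ from Lemma~\ref{lem:narrow_gate_unique}, observe that slices of $V(\uldelta)$ at real parts $t'>x_1$ are connected, and get linear growth of $d_V(-1,\zeta)$ by integrating \eqref{eq:hypest} along a curve whose length grows linearly. The only difference is in the curve. The paper uses the polygon through the gate centres, which stays a uniform distance $\alpha>0$ from $\partial V$, whereas you follow $\gamma$ and pay an $O(\log(r_0/\eta_0))$ cost per gate. The one loose end is your choice of $\zeta$ when $t$ is very close to some $x_n$. That is harmless: Lemma~\ref{lem:finite-order-criterion} only needs $\Rea\zeta\ge t$, so you can move $\zeta$ to a slightly larger real part. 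Alternatively, at the shooting parameter $w_n$ lies in the middle part of its gate.
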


\begin{proof}
	We use the final assertion of Lemma~\ref{lem:narrow_gate_unique}. After the initial rescaling, choose the admissible sequence to be constant, \(\eta_n=\eta_0\) for all \(n\). Then apply Corollary~\ref{cor:choose_delta} with this choice of \(\eta\), and let \(\uldelta\in\Delta\) be the resulting parameter. Put \(V\defeq V(\uldelta)\), let \(G\colon V\to H\) be the corresponding conformal map, and let \(f\defeq f_{\uldelta}\) be the approximating entire function. We verify the hypotheses of Lemma~\ref{lem:finite-order-criterion}.
	
	For each \(n\), let
	\[
	q_n\defeq x_n+i(y_n-3\eta_0/2+\delta_n)
	\]
	be the centre of the gate \(I_n(\uldelta)\). Join \(-1\) to \(q_1\) by a fixed curve in $V$, and join \(q_n\) to \(q_{n+1}\) by the straight line segment. This gives a continuous curve \(\Gamma\subset V\) tending to infinity. Since \(\delta_n\in[0,3\eta_0]\), the vertical displacement of \(q_n\) from \(w_n\) lies in \([-3\eta_0/2,3\eta_0/2]\). Choosing \(\eta_0\) sufficiently small, the curve \(\Gamma\) stays a positive Euclidean distance from \(\partial V\); that is, there is \(\alpha>0\) such that \(\dist(\Gamma,\partial V)\ge\alpha\). Moreover, since \(|y_{n+1}-y_n|\le2\pi\) and \(x_{n+1}-x_n\ge K_0\), the Euclidean length of \(\Gamma\cap\{z:\Rea z\le t\}\) is at most \(K_1t\), for all sufficiently large \(t\), with some constant \(K_1>0\).

	For every sufficiently large \(t\), choose \(\zeta_t\in\Gamma\) with \(\Rea\zeta_t=t\). Since \(\rho_V(z)\le2/\dist(z,\partial V)\), we have \(\rho_V(z)\le2/\alpha\) on \(\Gamma\). Hence
	\[
	d_V(-1,\zeta_t)\le \frac{2K_1}{\alpha}t.
	\]
	Thus the first hypothesis of Lemma~\ref{lem:finite-order-criterion} holds.
	
	It remains to verify the second hypothesis. Choose 
	\(B>2\log32\), and put \(C=1\). If \(t\) is sufficiently large, then
	\(t+B>x_1\). For every \(s>x_1\), the vertical slice
	\(V\cap\{\Rea z=s\}\) is connected: if \(s\ne x_n\) for all \(n\), this is
	the full vertical slice of the tube \(U\), while if \(s=x_n\), it is exactly
	the gate \(I_n(\uldelta)\). Hence, for each sufficiently large \(t\), we may
	take \(t'=t+B\in[t+B,t+B+C]\), and the second hypothesis of
	Lemma~\ref{lem:finite-order-criterion} follows.
	
	Both hypotheses of Lemma~\ref{lem:finite-order-criterion} are satisfied,
	and therefore \(f\) has finite order.
\end{proof}

Corollary~\ref{cor:choose_delta} proves the first assertion of Theorem~\ref{th:2}. If \(|z_{n+1}|\geq K|z_n|\) for some \(K>1\), then \eqref{eq_separation} holds with \(K_0=\log K\), and Proposition~\ref{prop:finite_order} gives finite order. Together with the rescaling reduction above, this completes the proof of Theorem~\ref{th:2}.

\section{Proof of Theorem~\ref{th:1}}
\label{S.th1}

We follow a strategy similar to that in the proof of Theorem~\ref{th:2}. This time, the definition of our tracts will be more involved, and will depend on three infinite collections of parameters. In all cases, to simplify the estimates, the tracts will be symmetric with respect to the real axis, and will be based on the half-infinite strip
\[
V' \defeq \{ x + iy : x > -2, |y| < \pi/2 \}.
\]
Our tracts will be constructed by introducing \textit{blockages}, indexed by $n\in \N$, into $V'$. Each blockage consists of a passage, a narrow tunnel, and several chambers separated by gates. (The meaning of the terms `blockage', `gate' and `tunnel' will become apparent soon.) To fully define the tracts we will need three collections of parameters, as follows.

\begin{enumerate}
	\item A sequence \(\ulr\defeq(r_0,r_1,\ldots)\) of positive real numbers. In the first step below we shall impose only \(r_0\ge r_0^*\), with \(r_0^*\) large enough for \eqref{eq:G_2}, and \(r_n\ge1\) for \(n\ge1\); later we will choose the gaps larger, if needed, to ensure finite order. We will ensure that the first blockage starts at real part \(r_0\), and the horizontal distance between the \(n\)-th and \((n+1)\)-st blockages will be \(r_n\).
	\item A sequence \(\ult\defeq(t_1,t_2,\ldots)\) of positive real numbers, each at most \(1/4\). The number \(t_n\) determines the height of the tunnel \(T_n\) in the \(n\)-th blockage.
	\item A family of gate parameters \(\uldelta=(\delta_{n,m})\), indexed by
	\(n\in\N\) and \(1\le m\le n\), with \(0<\delta_{n,m}\le 1/n\). The number
	\(\delta_{n,m}\) determines the common height of the two symmetric gates
	\(G^1_{n,m}\) and \(G^{-1}_{n,m}\). We write
	\[
	\Delta\defeq \prod_{n\in\N}\prod_{m=1}^n (0,1/n]
	\]
	for the corresponding parameter space.
\end{enumerate}
To define $V(\ulr, \ult, \uldelta)$ for sequences as above, for simplicity of notation set $\tau_1 \defeq \pi/2$ and $\tau_2 \defeq \tau_1 - 1$. For each $n$,
let 
$$ R_n \defeq \sum_{i=0}^{n-1} r_i + (n-1).$$

Moreover, we define the tunnel $T_n$ and the passage $P_n$ as 
\begin{align*}
	P_n & \defeq \left[R_n, R_n+\frac{1}{2}\right]\times \left[-\tau_2 i,\tau_2 i\right], \\
	T_n &\defeq \left[R_n+\frac{1}{2}, R_n+1\right]\times \left[-\frac{t_n}{2}i,\frac{t_n}{2}i\right], 
\end{align*}
and for every $1\leq m\leq n$ and $j\in \{-1,1\}$, the chambers $C^{j}_{n,m}$ and gates $G^{j}_{n,m}$ are given by
\begin{align*}
	C^{j}_{n,m} & \defeq \left[R_n, R_n+1\right]\times \left[j\cdot i\left(\tau_2+\frac{m-1}{n}\right), j\cdot i\left(\tau_2+\frac{m}{n}\right)\right], \\
	G^j_{n,m}
	& \defeq
	\left\{R_n+1+iy:
	\left|y-j\left(\tau_2+\frac{2m-1}{2n}\right)\right|<\frac{\delta_{n,m}}2
	\right\},
	\qquad j\in\{-1,1\}.
\end{align*}
Thus \(\delta_{n,m}\) is the common height of the two symmetric gates \(G^1_{n,m}\) and \(G^{-1}_{n,m}\): the value
\(\delta_{n,m}=1/n\) corresponds to a fully open pair of gates, while the limiting value \(\delta_{n,m}=0\), not included
in \(\Delta\), corresponds to a fully closed gate. See Figure \ref{fig:tract_Erdos}. Using a tunnel, rather than another gate, makes subsequent calculations somewhat easier, since it allows us to control distortion via narrow vertical bottlenecks.
We also set 
	$$x_n\defeq R_n+\frac14 \quad \text{ and } \quad x_n'\defeq R_n+\frac32.$$ 
	Thus \(x_n\) lies in the passage \(P_n\), while \(x_n'\) lies in the strip between the \(n\)-th and \((n+1)\)-st blockages whenever \(r_n\ge1\).
In addition, in order to define our tract, it is convenient to introduce notation for some of the boundaries of the passage \(P_n\) and the tunnel \(T_n\): 
\begin{align*}
	L^1_n&\defeq\{R_n+iy:|y|<\tau_2\},\\
	L^2_n&\defeq\{R_n+1/2+iy:|y|<t_n/2\},\\
	L^3_n&\defeq\{R_n+1+iy:|y|<t_n/2\}.
\end{align*}
Finally, we define 
\begin{equation*}
	V(\ulr,\ult,\uldelta)\defeq
	\left(
	V'\setminus
	\bigcup_{n\in\N}
	\left(
	\partial P_n\cup \partial T_n
	\cup
	\bigcup_{\substack{1\le m\le n\\ j\in\{-1,1\}}}
	\partial C^j_{n,m}
	\right)
	\right)
	\cup
	\bigcup_{n\in\N}
	\left(
	\bigcup_{\substack{1\le m\le n\\ j\in\{-1,1\}}} G^j_{n,m}
	\cup
	\bigcup_{k\in\{1,2,3\}} L^k_n
	\right).
\end{equation*}
Thus we remove the individual boundary walls of the passage, tunnel and chambers, and then add back only the gates and the selected vertical openings. 
\begin{figure}[htp]	
	\centering
	\def\svgwidth{\linewidth}
	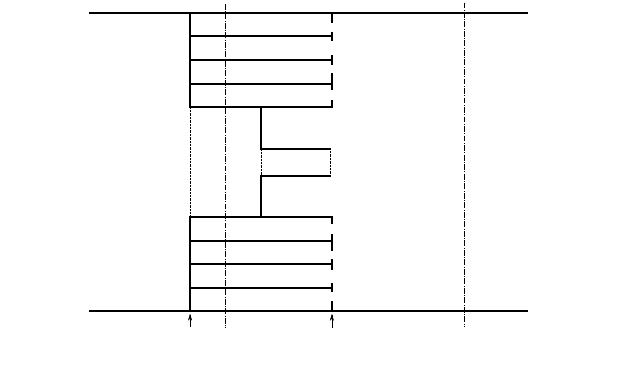
	\caption{Schematic of the tract $V(\ulr,\ult,\uldelta)$. 
		Each blockage (shown for a fixed $n$) consists of a central passage $P_n$, a narrow tunnel $T_n$, 
		and chambers $C^{j}_{n,m}$ above and below, connected through gates $G^{j}_{n,m}$. 
		The vertical segments $L^1_n$, $L^2_n$, and $L^3_n$ indicate parts of the boundary that are retained. 
		The vertical lines at real parts $x_n$ and $x_n'$ mark the locations where the maximum modulus will be controlled.}\label{fig:tract_Erdos}
\end{figure}

Observe that by definition, $V(\ulr, \ult, \uldelta)\subset V'$ is simply connected and disjoint from its $2\pi i$ translates. We let $G_{(\ulr, \ult, \uldelta)}$ be the conformal isomorphism from $V(\ulr, \ult, \uldelta)$ to $H$, such that $G_{(\ulr, \ult, \uldelta)}(-1) = 1$, and such that $G_{(\ulr, \ult, \uldelta)}(z) \rightarrow \infty$ as $\Rea z \rightarrow +\infty$. Let $M > 0$ be the constant from Theorem~\ref{thm_approx1.7}, which does not depend on $(\ulr, \ult, \uldelta)$.
Arguing as in Observation~\ref{claim:ell}, if \(r_0\) is chosen sufficiently large, in particular so that \(r_0>2(\log M+1)\), then for any vertical segment \(\beta\subset V(\ulr,\ult,\uldelta)\) at real part \(x>r_0/2\), with endpoints in \(\partial V(\ulr,\ult,\uldelta)\), which separates \(-1\) from \(\infty\), we have
\begin{equation}
	\label{eq:G_2}
	\sup_{z \in \beta} \Rea G_{(\ulr, \ult, \uldelta)}(z) \geq 4.
\end{equation}
Let $g_{{(\ulr, \ult, \uldelta)}}$ be the corresponding function satisfying \eqref{eq:gG}, corresponding to the model $(G_{(\ulr, \ult, \uldelta)}, V(\ulr, \ult, \uldelta), H)$. We then let $f_{{(\ulr, \ult, \uldelta)}}$ and $h_{{(\ulr, \ult, \uldelta)}}$ be the corresponding functions provided by Theorem \ref{thm_approx1.7}. Since \(V(\ulr,\ult,\uldelta)\) is invariant under complex conjugation and the
normalisations are real, the corresponding map \(G_{(\ulr,\ult,\uldelta)}\) is
real-symmetric. Hence, for \(T=\exp(V(\ulr,\ult,\uldelta))\), the map
\(\Psi\colon T\to H\) satisfies the symmetry hypotheses in
Theorem~\ref{thm_approx1.7}. We therefore choose the approximating function
\(f_{(\ulr,\ult,\uldelta)}\) so that
\(f_{(\ulr,\ult,\uldelta)}(\bar\zeta)=
\overline{f_{(\ulr,\ult,\uldelta)}(\zeta)}\).

Then, for any \(\ulr\) whose first entry \(r_0\) is chosen so that \eqref{eq:G_2} holds, arguing as in Proposition~\ref{prop_MaxInV}, 
\begin{equation}
	\Mlog(f_{_{(\ulr,\ult,\uldelta)}}) \cap \{x+iy\in\C:x>r_0/2\}
	\subset
	\bigcup_{k\in\Z}\bigl(V(\ulr,\ult,\uldelta)+2k\pi i\bigr).
\end{equation}
From now on, we will assume that \(r_0\) is chosen so that \eqref{eq:G_2} holds.

With this notation, the vertical line \(\{\Rea z=x_n\}\) passes through the chambers and the passage of the \(n\)-th blockage. Our goal is to arrange, by varying the tunnel height and the gate sizes, that the maxima of \(z\mapsto |f_{(\ulr,\ult,\uldelta)}(e^z)|\) on the passage and chamber slices of this line have the relative sizes required for the shooting argument.

For \(n\in\N\), the vertical line \(\{\Rea z=x_n\}\) intersects the tract
inside the passage and inside each chamber of the \(n\)-th blockage. We denote
the components of this intersection by
\begin{align*}
	\gamma_n
	&\defeq V(\ulr,\ult,\uldelta)\cap P_n\cap\{\Rea z=x_n\}
	= \{x_n+iy: |y|<\tau_2\},\\
	\gamma^j_{n,m}
	&\defeq V(\ulr,\ult,\uldelta)\cap C^j_{n,m}\cap\{\Rea z=x_n\}
	= \{x_n+iy: \tau_2+(m-1)/n<jy<\tau_2+m/n\},
\end{align*}
for \(1\le m\le n\) and \(j\in\{-1,1\}\). Hence
\[
V(\ulr,\ult,\uldelta)\cap\{\Rea z=x_n\}
=
\gamma_n\cup
\bigcup_{m=1}^n\bigcup_{j\in\{-1,1\}}\gamma^j_{n,m},
\]
as a disjoint union of \(2n+1\) open vertical segments. We also set
\[
\xi_n\defeq x_n,\qquad
\xi^j_{n,m}\defeq x_n+i\,j\left(\tau_2+\frac{m-\frac12}{n}\right).
\]
Thus \(\xi_n\) is the centre of the passage slice \(\gamma_n\), and
\(\xi^j_{n,m}\) is the centre of the corresponding chamber slice
\(\gamma^j_{n,m}\). Compact subsegments of these slices will be chosen later
for the shooting argument.

The next proposition is the open-gate estimate. It shows that, once the
tunnel heights have been chosen sufficiently small, a fully open gate forces
the corresponding chamber slice to dominate the passage slice: the supremum
on \(\gamma^j_{n,m}\) is strictly larger than the supremum on \(\gamma_n\).

\begin{proposition}[Open-gate comparison]\label{prop_fixing_tn}
	There are an initial lower bound \(r_0^*>0\) and tunnel heights \(\ult=(t_1,t_2,\ldots)\), with \(0<t_n\leq1/4\), such that for every sequence \(\ulr=(r_0,r_1,\ldots)\) satisfying \(r_0\ge r_0^*\) and \(r_k\ge1\) for all \(k\ge1\), the following holds. If \(n\in\N\), \(1\leq m\leq n\), \(j\in\{-1,1\}\), and \(\uldelta\in\Delta\) satisfies \(\delta_{n,m}=1/n\), then
	\[
	\sup_{z\in\gamma^j_{n,m}}|f_{(\ulr,\ult,\uldelta)}(e^z)|
	\geq
	|f_{(\ulr,\ult,\uldelta)}(e^{\xi^j_{n,m}})|
	>
	\sup_{w\in\gamma_n}|f_{(\ulr,\ult,\uldelta)}(e^w)|.
	\]
\end{proposition}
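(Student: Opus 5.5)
The idea is to compare, at real part \(x_n\), the values of \(\Rea G\) on the passage slice \(\gamma_n\) and on a chamber slice \(\gamma^j_{n,m}\) whose gate is fully open. The chamber is reached through its gate at real part \(R_n+1\). The passage, by contrast, is reached only through the tunnel \(T_n\), of height \(t_n\). A small \(t_n\) should therefore make the passage ``deep'' in the conformal sense: \(\Rea G\) on \(\gamma_n\) is much smaller than at the centre \(\xi^j_{n,m}\) of an open chamber. Since \(|f(e^z)|=e^{\Rea G(z)}+O(Me^{-x_n})\), and by \eqref{eq:G_2} the relevant values exceed \(e^4\), a strict gap in \(\Rea G\) transfers to \(|f|\) once \(r_0^*\) is large. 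The first inequality in the statement is trivial because \(\xi^j_{n,m}\in\gamma^j_{n,m}\). So everything reduces to proving
\[
\Rea G(\xi^j_{n,m}) > \sup_{\gamma_n}\Rea G + 1,
\]
and then absorbing the \(h\)-error.

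I will work with \(\phi=\psi^{-1}\circ G\colon V\to S\) and the harmonic function \(\Rea G\). First, an upper bound on \(\gamma_n\). The passage component of \(V\cap\{\Rea z\le R_n+1\}\) near the blockage connects to the far side only through \(T_n\). Every curve from \(\gamma_n\) to \(\{\Rea z> R_n+1\}\) staying in the passage must cross the tunnel, whose extremal distance is at least \(1/(2t_n)\) (length \(1/2\), height \(t_n\)). By Theorem~\ref{thm_ahlfors} applied in the tunnel, or more robustly via harmonic measure/extremal length, this forces \(\Rea\phi\) on \(\gamma_n\) to be smaller by roughly \(1/(2t_n)-O(1)\) than \(\Rea\phi\) at the exit \(L^3_n\). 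The difficulty is that \(\gamma_n\) is also reachable from the left, through \(L^1_n\). So \(\Rea G\) on \(\gamma_n\) is controlled by its values on \(L^1_n\), which lies essentially at the same conformal level as the chamber entrances. I would argue that \(\Rea G\) restricted to the closed rectangle \(P_n\cup T_n\) is harmonic, vanishes (asymptotically) on the walls where \(G\) maps to \(\partial H\), and has boundary data only on \(L^1_n\) and \(L^3_n\), with the \(L^3_n\) contribution exponentially damped by the tunnel. Hence \(\sup_{\gamma_n}\Rea G\le c_P\max_{L^1_n}\Rea G+\varepsilon(t_n)\max_{L^3_n}\Rea G\), where \(c_P<1\) is a harmonic-measure constant of the passage geometry and \(\varepsilon(t_n)\to0\).

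Second, a lower bound at \(\xi^j_{n,m}\) when \(\delta_{n,m}=1/n\). The chamber \(C^j_{n,m}\) is then a rectangle open on its right side to the strip beyond \(R_n+1\), and its left side lies at real part \(R_n\), where it also meets the region before the blockage. I would bound \(\Rea G(\xi^j_{n,m})\) from below by harmonic measure of the open gate seen from \(\xi^j_{n,m}\), times \(\min\Rea G\) on the gate. That minimum is comparable to \(\max_{L^1_n}\Rea G\) and in fact larger, since it lies further along the tract. Ahlfors' theorem (Theorem~\ref{thm_ahlfors}) together with Lemma~\ref{lem:psi_basic_estimates} gives the exponential comparison \(\psi(s+u)\ge e^{a_\psi u}\psi(s)\). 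The key is to compare \(\Rea G(\xi^j_{n,m})\) and \(\sup_{\gamma_n}\Rea G\) relative to a common scale. A natural choice is \(\psi(b)\), where \(b=\Rea\phi\) at the crosscut \(\{\Rea z=R_n+1\}\) portion. Both quantities are then bounded in terms of this scale: the chamber value from below by \(\kappa_n\psi(b)\) with \(\kappa_n>0\) depending on \(n,m\) but not on \(t_n\), and the passage value from above by \((c'+\varepsilon(t_n))\psi(b)\). Finally I would choose \(t_n\) so small that \(\varepsilon(t_n)\) beats the required gap.

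The main obstacle is making these comparisons uniform in all other parameters: \(\ulr\), the other tunnel heights \(t_k\), and all other gates \(\delta_{n',m'}\), including the other gates of the same blockage. Only \(t_n\) may depend on \(n\). I would handle this by normalising, rescaling \(\Rea G\) near the \(n\)-th blockage by its value at a reference point, say \(x_n'\). I would then use a compactness/Carath\'eodory kernel argument in the style of Observation~\ref{obs_continuous_dependence} and Lemma~\ref{lem:narrow_gate_unique}. As the relevant parameters vary, after normalisation the local picture near the \(n\)-th blockage converges to a limit where \(\Rea G\) becomes a positive harmonic function vanishing on the walls and growing like \(e^{\pi x}\)-type profiles at the ends. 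For fixed \(n\) the set of possible local configurations is compact, apart from \(r_k\to\infty\), which only improves separation. Also, \(\Rea G(\xi^j_{n,m})\) is bounded below relative to the normalisation uniformly, while \(\sup_{\gamma_n}\) tends to a strictly smaller value as \(t_n\to0\), by comparison with the closed-tunnel limit. In that limit the passage is a dead end, harmonic measure from the left forces strictly lower values than in the open chamber, and strictness follows from the maximum principle. Choosing \(t_n\) then gives the gap, and the \(h\)-error is absorbed by \(r_0\ge r_0^*\).
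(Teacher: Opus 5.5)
Your first step, Ahlfors distortion through the tunnel, is the right engine, and so is normalising at a point beyond the tunnel. However, the rest of the argument rests on a misreading of the tract, and this leaves a genuine gap.

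\textbf{The geometry.} In the definition of \(V(\ulr,\ult,\uldelta)\), the only openings added back are the gates \(G^j_{n,m}\) and the segments \(L^1_n,L^2_n,L^3_n\). The left sides of the chambers, at \(\Rea z=R_n\), belong to \(\partial C^j_{n,m}\) and stay in the boundary. So each chamber is a pocket entered only through its gate at \(\Rea z=R_n+1\), that is, \emph{after} the tunnel. Every path from \(-1\) to \(\xi^j_{n,m}\) crosses \(L^1_n\), then \(\gamma_n\), then \(T_n\). Your claims that the chamber's left side ``meets the region before the blockage'' and that \(L^1_n\) is ``at the same conformal level as the chamber entrances'' are therefore false; \(L^1_n\) in fact lies conformally behind \(\gamma_n\).

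\textbf{The resulting gaps.} Your upper bound on \(\gamma_n\) keeps an order-one term \(c'\psi(b)\) coming from \(L^1_n\), and the decisive inequality \(c'<\kappa_n\) is never established. The closing compactness argument does not fill this. It is not a proof, and it describes the wrong limit. As \(t_n\to0\), the passage is cut off from the right strip and from every chamber, so after normalising at \(x_n'\) its values simply tend to \(0\). Your lower bound at \(\xi^j_{n,m}\) (harmonic measure of the gate times \(\min\Rea G\) on the gate) also fails as stated. \(\Rea G\) has boundary values \(\le0\) at the gate endpoints, so that minimum is not positive. Moreover, \(\partial H\) reaches far into \(\{\Rea w<0\}\), so the wall data are not \(\ge0\) either.

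\textbf{The paper's route.} With the correct topology, no limiting argument is needed, and uniformity in all other parameters is automatic. Since \(\gamma_n\) separates \(-1\) from \(\beta_n=V\cap\{\Rea z=x_n'\}\), and every separating vertical segment over the tunnel has length at most \(t_n\), Theorem~\ref{thm_ahlfors} gives \(b_n-a_n\ge 1/(2t_n)-\pi^{-1}\log32\). Here \(a_n=\sup_{\gamma_n}\Rea\phi\) and \(b_n=\phi(x_n')\). Lemma~\ref{lem:psi_basic_estimates} then yields \(\sup_{\gamma_n}\Rea G/G(x_n')\le e^{-a_\psi(b_n-a_n-1)}\to0\).

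For the chamber with \(\delta_{n,m}=1/n\), a polygonal path from \(\xi^j_{n,m}\) through the gate centre to \(x_n'\) has bounded length and stays a definite distance from \(\partial V\). So \(d_V(\xi^j_{n,m},x_n')\le K_n\), independently of \(t_n\) and of every other parameter. Hence \(\phi(\xi^j_{n,m})-b_n\) lies in a fixed compact hyperbolic ball \(E_n\subset S\). Corollary~\ref{cor:psi_large_real} then gives \(\Rea G(\xi^j_{n,m})\ge c_nG(x_n')\) once \(b_n\) is large.

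Finally, choose \(t_n\) so that \(G(x_n')\) is large and \(\sup_{\gamma_n}\Rea G\le\frac{c_n}{2}G(x_n')\). This produces a gap in \(|g|\) exceeding \(2Me^{-x_n}\), which absorbs the \(h\)-error.
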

\begin{proof}
	Choose \(r_0^*>2(\log M+1)\) so large that \eqref{eq:G_2}
	holds whenever \(r_0\ge r_0^*\). We shall choose the tunnel heights
	\(\ult=(t_1,t_2,\ldots)\) inductively. Throughout this proposition, admissible sequences \(\ulr\) are those satisfying \(r_0\ge r_0^*\) and \(r_k\ge1\) for all \(k\ge1\). Since \(r_n\ge1\), the point \(x_n'\) lies in the strip between the \(n\)-th and \((n+1)\)-st blockages,
	because \(R_{n+1}=R_n+r_n+1\ge R_n+2\). Fix \(n\in\N\), and suppose that \(t_1,\ldots,t_{n-1}\) have already been chosen. The value of \(t_n\in(0,1/4]\) will be fixed at the end of the inductive step.
	
	Let \(S\) be the strip defined in \eqref{eq_S}, and let
	\(\psi:S\to H\) be the normalized conformal isomorphism from
	Lemma~\ref{lem:psi_basic_estimates}. Write  \(V\defeq V(\ulr,\ult,\uldelta)\),
	\(G\defeq G_{(\ulr,\ult,\uldelta)}\), and
	\(\phi\defeq\psi^{-1}\circ G:V\to S\). Since the tract is symmetric with
	respect to the real axis, and since the normalizations are real, both
	\(G\) and \(\phi\) map the real axis to the real axis. Let
	\[
	\beta_n\defeq V\cap\{\Rea z=x_n'\}.
	\]
	The segment \(\beta_n\) is the full vertical crosscut of the clean strip at real part \(x_n'\), and \(\gamma_n\) separates \(-1\) from \(\beta_n\) in \(V\).  For \(x_n\le s\le x_n'\), let \(\theta_n(s)\) be the length of the
	shortest vertical segment at real part \(s\) that separates \(\gamma_n\)
	from \(\beta_n\) in \(V\). On the tunnel interval
	\(R_n+1/2<s<R_n+1\), such a separating segment has length at most \(t_n\).
	Hence
	\[
	\int_{x_n}^{x_n'}\frac{ds}{\theta_n(s)}
	\ge \frac{1}{2t_n}.
	\]	
	Since \(t_n\le1/4\), the hypothesis of Theorem~\ref{thm_ahlfors} is
	satisfied. Therefore
	\[
	\inf_{z\in\beta_n}\Rea\phi(z)
	-
	\sup_{w\in\gamma_n}\Rea\phi(w)
	\ge
	\frac{1}{2t_n}-\frac1\pi\log32.
	\]
	Set
	\[
	a_n\defeq\sup_{w\in\gamma_n}\Rea\phi(w),
	\qquad
	b_n\defeq\phi(x_n').
	\]
	Since \(x_n'\in\beta_n\), we obtain
	$
	b_n-a_n\ge \frac{1}{2t_n}-\frac1\pi\log32.
	$
	In particular \(b_n-a_n\to+\infty\) as \(t_n\to0\), uniformly in the gate
	parameters and in all later choices of parameters. Also \(a_n\ge\Rea\phi(x_n)=\phi(x_n)>0\), and hence \(b_n\to+\infty\)
	uniformly as \(t_n\to0\).

	We next estimate how the tunnel height \(t_n\) controls \(\Rea G\) on the passage slice \(\gamma_n\). Let \(w\in\gamma_n\),
	and write \(\phi(w)=s+iy\), with \(|y|<1/2\). By definition of \(a_n\),
	we have \(s=\Rea\phi(w)\le a_n\). Hence Lemma~\ref{lem:psi_basic_estimates}
	gives
	$
	\Rea G(w)
	=
	\Rea\psi(s+iy)
	\le
	\psi(s)
	\le
	\psi(a_n+1),
	$
	where the last inequality follows from \eqref{eq:lem_aux1}. Therefore
	\[
	\sup_{w\in\gamma_n}\Rea G(w)
	\le
	\psi(a_n+1).
	\]
	Since \(b_n\in\R\), we have \(G(x_n')=\psi(b_n)>0\). Once \(t_n\) is small
	enough that \(b_n>a_n+1\), another application of
	Lemma~\ref{lem:psi_basic_estimates}, with \(s=a_n+1\) and
	\(u=b_n-a_n-1\), gives
	\begin{equation}\label{eq:in41_G}
		\frac{\sup_{w\in\gamma_n}\Rea G(w)}{G(x_n')}
		\le
		\frac{\psi(a_n+1)}{\psi(b_n)}
		\le
		e^{-a_\psi(b_n-a_n-1)}	\le
		e^{
			-a_\psi\left(\frac{1}{2t_n}-\frac{1}{\pi}\log32-1\right)} \xrightarrow[t_n\to0]{}0
	\end{equation}
	where the convergence of the right-hand side to \(0\) is independent of
	the gate parameters and of all later choices in the induction. In other words, by making the tunnel height \(t_n\) sufficiently small, we
	force the real part of \(G\) on the passage \(\gamma_n\) to be arbitrarily
	small compared with the value \(G(x_n')\) at the reference point
	\(x_n'\).
	
	We next compare \(x_n'\) with the chamber centres whose gates are fully open. Suppose that \(\delta_{n,m}=1/n\). Then \(\xi^j_{n,m}\) can be joined to \(x_n'\) by a polygonal curve in \(V\) passing through the centre of the open gate \(G^j_{n,m}\) and then through the strip to the right of the blockage until reaching $x_n'$. The Euclidean length of this curve is bounded above, and its distance from \(\partial V\) is bounded below, by constants depending only on \(n,m,j\). Hence \eqref{eq:hypest} gives a constant \(K^j_{n,m}>0\) such that
	$
	d_V(\xi^j_{n,m},x_n')\le K^j_{n,m}
	$
	whenever \(\delta_{n,m}=1/n\). In order to get uniform constants, let
	\[
	K_n\defeq
	\max_{\substack{1\le m\le n\\ j\in\{-1,1\}}}K^j_{n,m}.
	\]
	Consider the hyperbolic closed ball
	$
	E_n\defeq\{\eta\in S:d_S(0,\eta)\le K_n\}$. If \(\eta=u+iv\in S\), then \(|v|<1/2\), and hence
	$
	\Rea(e^{\pi\eta})=e^{\pi u}\cos(\pi v)>0.
	$
	Since \(E_n\) is compact and contained in \(S\), the continuous function
	\(\eta\mapsto \Rea(e^{\pi\eta})\) has a positive minimum on \(E_n\). Thus
	we may choose \(c_n\in(0,1)\) so small that
	$
	2c_n\le \min_{\eta\in E_n}\Rea e^{\pi\eta}.
	$ Thus, by Corollary~\ref{cor:psi_large_real},
	\[
	\frac{\psi(b+\eta)}{\psi(b)}\longrightarrow e^{\pi\eta}
	\qquad\text{as } b\to+\infty
	\]
	locally uniformly in \(S\). Since \(E_n\) is compactly contained in \(S\),
	this convergence is uniform for \(\eta\in E_n\), and so there exists \(B_n>0\) such that, for every \(b\ge B_n\) and every
	\(\eta\in E_n\),
	\[
	\Rea\psi(b+\eta)\ge c_n\psi(b),
	\]
	where we have used that \(\psi(b)>0\) for \(b\in\R\).
	We now choose the constants that will give a quantitative gap large enough
	to survive the approximation error \(f-g\). Set \(x_n^\flat\) to be the smallest possible value of \(x_n=R_n+1/4\) among all admissible choices of \(\ulr\). Then \(x_n\ge x_n^\flat\) for every admissible \(\ulr\). Choose \(L_n>0\) so large that
	\[
	D_n\defeq e^{c_nL_n}-e^{c_nL_n/2}>2Me^{-x_n^\flat}
	\]
	and choose \(t_n\in(0,1/4]\) sufficiently small so that, for every
	admissible choice of \(\ulr\), of the gate parameters, and of all later
	parameters,
	\[
	b_n\ge B_n,\qquad
	G(x_n')\ge L_n,\qquad
	\sup_{w\in\gamma_n}\Rea G(w)\le \frac{c_n}{2}G(x_n').
	\]
	This is possible by the estimates obtained above: as \(t_n\to0\), we have
	\(b_n\to+\infty\), hence \(G(x_n')=\psi(b_n)\to+\infty\), and also
	$
	\frac{\sup_{w\in\gamma_n}\Rea G(w)}{G(x_n')}\to0.
	$
	This last convergence follows from \eqref{eq:in41_G}.
	These estimates are independent of which gate is open and of all later
	choices in the induction, so the same \(t_n\) works for every possible open
	gate in the \(n\)-th blockage.
	
	Now suppose that \(\delta_{n,m}=1/n\). Since \(\phi:V\to S\) is a
	conformal isomorphism,
	\[
	d_S(\phi(\xi^j_{n,m}),\phi(x_n'))
	=
	d_V(\xi^j_{n,m},x_n')
	\le K_n.
	\]
	As \(\phi(x_n')=b_n\), the point
	$
	\eta\defeq \phi(\xi^j_{n,m})-b_n
	$
	belongs to \(E_n\). Therefore, by the estimates above and by the choice of \(t_n\),
	\[
	\Rea G(\xi^j_{n,m})
	=
	\Rea\psi(b_n+\eta)
	\ge c_n\psi(b_n)
	=
	c_nG(x_n') >
	\sup_{w\in\gamma_n}\Rea G(w).
	\]
	More precisely,
	\[
	|g_{(\ulr,\ult,\uldelta)}(e^{\xi^j_{n,m}})|
	-
	\sup_{w\in\gamma_n}|g_{(\ulr,\ult,\uldelta)}(e^w)|
	\ge
	e^{c_nG(x_n')}-e^{c_nG(x_n')/2}
	\ge D_n.
	\]
	Finally, since
	\(f=f_{(\ulr,\ult,\uldelta)}=g_{(\ulr,\ult,\uldelta)}+h_{(\ulr,\ult,\uldelta)}\), Theorem~\ref{thm_approx1.7}
	gives
	\(
	|h_{(\ulr,\ult,\uldelta)}(e^z)|\le Me^{-x_n}\le Me^{-x_n^\flat}
	\) on \(\{\Rea z=x_n\}\), and so
	\[
	|f_{(\ulr,\ult,\uldelta)}(e^{\xi^j_{n,m}})|
	-
	\sup_{w\in\gamma_n}|f_{(\ulr,\ult,\uldelta)}(e^w)|
	\ge
	D_n-2Me^{-x_n^\flat}>0.
	\]
	Since \(\xi^j_{n,m}\in\gamma^j_{n,m}\), it follows that
	\[
	\sup_{z\in\gamma^j_{n,m}}
	|f_{(\ulr,\ult,\uldelta)}(e^z)|
	\ge
	|f_{(\ulr,\ult,\uldelta)}(e^{\xi^j_{n,m}})|
	>
	\sup_{w\in\gamma_n}
	|f_{(\ulr,\ult,\uldelta)}(e^w)|.
	\]
	This completes the \(n\)-th inductive step. Since the estimates are uniform in the remaining gate parameters and in all later choices of gaps and tunnel heights, the induction gives the desired initial lower bound \(r_0^*\) and sequence \(\ult\).
\end{proof}

\begin{proposition}\label{prop_fixing_rn}
	Let \(r_0^*>0\) and \(\ult=(t_1,t_2,\ldots)\) be the initial lower bound and tunnel heights provided by Proposition~\ref{prop_fixing_tn}. Then there exists a sequence \(\ulr=(r_0,r_1,\ldots)\), with \(r_0\ge r_0^*\) and \(r_k\ge1\) for all \(k\ge1\), such that, for every \(\uldelta\in\Delta\), the function \(f_{(\ulr,\ult,\uldelta)}\) has finite order.
	
	Moreover, for every \(T>0\), there is a constant \(L_T>0\) such that
	\[
	\Rea G_{(\ulr,\ult,\uldelta)}(z)\le L_T
	\]
	for every \(\uldelta\in\Delta\) and every
	\(z\in V(\ulr,\ult,\uldelta)\) with \(\Rea z\le T\).
\end{proposition}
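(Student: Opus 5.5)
The plan is to verify the two hypotheses of Lemma~\ref{lem:finite-order-criterion} with constants \(t_0,A,B,C\) that are uniform in \(\uldelta\in\Delta\). Once this is done, both assertions of the proposition follow at once: finite order of \(f_{(\ulr,\ult,\uldelta)}\) comes from the main conclusion of the lemma, and the uniform bound on \(\Rea G\) comes from its final assertion, since \(L_T\) depends only on \(T,t_0,A,B,C\) and \(H\). The tunnel heights \(t_n\) are already fixed, and we cannot make them large, so all the freedom lies in the gaps \(r_n\). We choose \(r_0=r_0^*\) and, inductively, take \(r_n\) large compared with quantities depending on \(n\) and \(t_n\). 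We aim for \(r_n\ge\max\{1,K\,n\,\log(1/t_n)+n^2\}\) for a suitable absolute constant \(K\), which makes \(R_n\) grow fast relative to the "cost" of crossing blockage \(n\).

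The second hypothesis is the easy one. Every vertical line at real part \(s\in(R_n+1,R_{n+1})\), and every line with \(s<R_1\), meets \(V\) in the full segment \(\{|y|<\pi/2\}\), independently of \(\uldelta\). The blockages occupy only the intervals \([R_n,R_n+1]\), each of length \(1\), and these are separated by gaps \(r_n\ge1\). Take \(B=2\log 32+1\) and \(C=2\). Then every interval \([t+B,t+B+2]\) contains a point outside \(\bigcup_n[R_n,R_n+1]\), and at that point the slice of \(V\) is connected.

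The first hypothesis is the main obstacle, because the domain pinches at the tunnels. For \(t\) large we take \(\zeta=\zeta_t\) to be the real point \(t\), or \(t+1\) if \(t\) lies in a blockage, and estimate \(d_V(-1,\zeta)\) along the real axis, which stays inside \(V\) through every passage and tunnel. On the clean strips, the distance from the real axis to \(\partial V\) is \(\pi/2\), so by \eqref{eq:hypest} the density is at most \(4/\pi\). The difficulty is crossing the \(n\)-th blockage. Inside the passage the distance to \(\partial V\) is of order \(1\), except close to the tunnel mouth. Inside the tunnel, which has height \(t_n\) and length \(1/2\), the density bound \(2/\dist\) gives a cost of order \(1/t_n\), plus a logarithmic cost near the two mouths. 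So the cost of crossing blockage \(n\) is at most some \(c_n\) depending only on \(t_n\) and not on \(\uldelta\), since the gates never touch the real segment. Crude bounds suffice here: \(c_n\le 10/t_n+10\).

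Summing, for \(t\in[R_N,R_{N+1}]\) we get
\[
d_V(-1,\zeta_t)\le \frac4\pi (t+3)+\sum_{n\le N}c_n+O(1).
\]
We therefore choose \(r_n\) inductively so that \(r_n\ge c_n+\sum_{k<n}c_k\). This gives \(\sum_{n\le N}c_n\le 2R_N\le 2t\), and hence \(d_V(-1,\zeta_t)\le At\) with an absolute constant \(A\) (say \(A=10\)) for all \(t>t_0\). This choice of \(\ulr\) is also compatible with the constraint \(r_k\ge1\), and Proposition~\ref{prop_fixing_tn} holds for any such \(\ulr\). All constants are independent of \(\uldelta\), because the path and its distance to \(\partial V\) do not involve the chambers or the gates. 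Lemma~\ref{lem:finite-order-criterion} then yields both conclusions.
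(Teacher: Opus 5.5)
Your strategy matches the paper's: verify both hypotheses of Lemma~\ref{lem:finite-order-criterion} with constants independent of $\uldelta$, using the real axis as the path for the first and the clean vertical slices between blockages for the second. The second hypothesis and the uniformity in $\uldelta$ are handled correctly.

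The gap is in the inductive choice of the gaps, which is off by one. We have $R_N=\sum_{i=0}^{N-1}r_i+(N-1)$, and $r_n$ is the gap \emph{after} the $n$-th blockage. So your condition $r_n\ge c_n+\sum_{k<n}c_k$ only gives $R_N\ge r_{N-1}\ge\sum_{k\le N-1}c_k$. Nothing bounds $c_N$ by $R_N$, so the step ``$\sum_{n\le N}c_n\le 2R_N$'' fails.

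A different choice of $\zeta_t$ cannot rescue this. Take $t=R_N+1$. Every $\zeta\in V$ with $\Rea\zeta\ge t$ is reached from $-1$ only by traversing the whole tunnel $T_N$, where $\dist(z,\partial V)\le t_N/2$. The lower bound in \eqref{eq:hypest} then gives $d_V(-1,\zeta)\ge 1/(2t_N)$. Hence the first hypothesis forces $1/t_N\le 2A(R_N+1)$ for all large $N$. The $t_n$ from Proposition~\ref{prop_fixing_tn} may decay arbitrarily fast, so a rule in which $R_N$ depends only on $t_1,\dots,t_{N-1}$ (with $r_0=r_0^*$) cannot guarantee this. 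For the same reason, your initial target $r_n\gtrsim n\log(1/t_n)+n^2$ is far too small: the tunnel costs order $1/t_n$, not $\log(1/t_n)$.

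The repair uses the fact that $\ult$ is fixed before $\ulr$ is chosen. Choose $r_0\ge r_0^*$ with $R_1\ge c_1$, and for $n\ge2$ choose $r_{n-1}\ge1$ so that $R_n\ge\sum_{k\le n}c_k$. Each $c_n$ should also absorb the bounded neighbourhoods of the blockage where the distance from the real axis to $\partial V$ drops below $\pi/2$, for example near the corners $R_n\pm i\tau_2$. Then for $t\in[R_N,R_{N+1})$ your estimate becomes $d_V(-1,t)\le\frac4\pi(t+3)+R_N+O(1)\le At$, and the rest of your argument goes through. This is exactly the paper's choice $R_n\ge 20\sum_{k=1}^n 1/t_k$. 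The paper combines it with the bounds $\dist(\tau,\partial V)\ge t_n/2$ for real $\tau\in[R_n-2,R_n+3]$ and $\dist(\tau,\partial V)\ge1$ for all other real $\tau>-1$.
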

\begin{proof}
We choose the sequence \(\ulr=(r_0,r_1,\ldots)\) inductively as follows. First choose \(r_0\ge r_0^*\) so large that \(R_1=r_0\ge20/t_1\). If \(n\ge2\) and \(r_0,\ldots,r_{n-2}\) have been chosen, choose \(r_{n-1}\ge5\) so large that \(R_n\ge 20\sum_{k=1}^n 1/t_k\). This is possible because \(R_n\) depends on \(r_0,\ldots,r_{n-1}\). Fix \(\uldelta\in\Delta\). The following estimates are independent of this choice.
	
	We want to apply Lemma~\ref{lem:finite-order-criterion}. Choose a constant \(B>2\log32\). Since the intervals $[R_n,R_n+1]$ are pairwise disjoint and have length $1$, we may choose, say, $C=3$, so that for every sufficiently large \(t\) there is \(t'\in[t+B,t+B+C]\) outside all these intervals. For this choice of \(t'\), the slice \(V\cap\{\Rea z=t'\}\) is the full vertical crosscut of the strip, hence is connected, and it separates the part of \(V\) with real part at most \(t\) from the positive right end of the tract. Thus the second hypothesis of Lemma~\ref{lem:finite-order-criterion} holds.
	
	For the first hypothesis, suppose that $t$ is sufficiently large; in particular, assume that $t>\max\{7,R_1-5\}$.We estimate \(d_V(-1,t)\) by estimating the hyperbolic length of the line segment \([-1,t]\). Let \(N\in\N\) be the greatest value such that \(R_N<t+5\). Observe that if \(\tau\in[R_n-2,R_n+3]\), for some \(n\), then the point \(\tau\) is a distance at least \(t_n/2\) from \(\partial V\). All other real values greater than \(-1\) are a distance at least \(1\) from \(\partial V\). Hence, by the standard estimate of the hyperbolic metric, \eqref{eq:hypest},
	\[
	d_V(-1,t)
	\leq 2(t+1)+5\sum_{k=1}^N\frac4{t_k}
	\leq 2(t+1)+R_N
	<3t+7.
	\]
	In particular, for \(t\) sufficiently large, \(d_V(-1,t)\le4t\), so the first hypothesis of Lemma~\ref{lem:finite-order-criterion} also holds, with $A=4$, again uniformly in \(\uldelta\).
	
	Therefore Lemma~\ref{lem:finite-order-criterion} implies that
	\(f_{(\ulr,\ult,\uldelta)}\) has finite order for every \(\uldelta\in\Delta\).
	Its final assertion also gives the following: for every \(T>0\), there is a
	constant \(L_T>0\), depending only on \(T\), \(H\), and the constants just fixed,
	such that
	\[
	\Rea G_{(\ulr,\ult,\uldelta)}(z)\le L_T
	\]
	whenever \(\uldelta\in\Delta\), \(z\in V(\ulr,\ult,\uldelta)\), and
	\(\Rea z\le T\). Since all constants in the verification above are independent
	of \(\uldelta\), so is \(L_T\). This proves both assertions.
\end{proof}

The next proposition is the almost closed-gate estimate. It is the counterpart to
Proposition~\ref{prop_fixing_tn}: after \(\ulr\) and \(\ult\) have been fixed,
making the common height \(\delta_{n,m}\) of the two symmetric gates
\(G^1_{n,m}\) and \(G^{-1}_{n,m}\) sufficiently small forces both corresponding chamber slices to be dominated by the passage slice. 

\begin{proposition}[Almost closed-gate comparison]\label{prop_closing_deltan}
	Let \(\ulr\) be the sequence given by Proposition~\ref{prop_fixing_rn}, and let \(\ult\) be the sequence given by Proposition~\ref{prop_fixing_tn}. Then there exists a family \((\varepsilon_{n,m})_{n\in\N,\,1\le m\le n}\), with \(\varepsilon_{n,m}\in(0,1/n)\), such that the following holds for every \(n\in\N\) and \(1\le m\le n\). If \(\uldelta\in \Delta\) satisfies \(0<\delta_{n,m}\leq\varepsilon_{n,m}\), then, for both \(j\in\{-1,1\}\),
	\[
	\sup_{z\in\gamma^j_{n,m}} |f_{(\ulr,\ult,\uldelta)}(e^z)|
	<
	\sup_{w\in\gamma_n}|f_{(\ulr,\ult,\uldelta)}(e^w)|.
	\]
	More precisely, the family \((\varepsilon_{n,m})\) may be chosen so that, for every \(n\in\N\) and \(1\le m\le n\), under the same hypothesis, for both \(j\in\{-1,1\}\),
	\begin{equation}\label{eq:prop4.3}
		\sup_{z\in\gamma^j_{n,m}}\Rea G_{(\ulr,\ult,\uldelta)}(z)\leq1.
	\end{equation}
\end{proposition}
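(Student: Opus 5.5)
The plan is to prove the stronger estimate \eqref{eq:prop4.3} first, and then obtain the comparison of moduli from it together with \eqref{eq:G_2}. The key point is that a nearly closed gate makes the chamber $C^j_{n,m}$ hyperbolically far from the end at $+\infty$. It also makes it "behind" a narrow opening, so conformally the chamber sits inside a region of $S$ where $\Rea\psi$ is bounded.

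Fix $n,m$ and $j$; by symmetry of the tract it suffices to treat $j=1$. The chamber $C^1_{n,m}$ is joined to the rest of $V=V(\ulr,\ult,\uldelta)$ only through the gate $G^1_{n,m}$, which has length $\delta_{n,m}$. So $G^1_{n,m}$ is a crosscut of $V$ separating the chamber slice $\gamma^1_{n,m}$ from both $-1$ and the end at $+\infty$, and $\phi(G^1_{n,m})$ is a crosscut of $S=\{|\Ima z|<1/2\}$ with both endpoints on the upper boundary line (or both on the lower one). The component of $S\setminus\phi(G^1_{n,m})$ containing $\phi(\gamma^1_{n,m})$ is then a "bump" region $Q$ attached to one boundary line of $S$. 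I will show that the Euclidean diameter of $\phi(G^1_{n,m})$, and hence of $Q$, tends to $0$ as $\delta_{n,m}\to0$, uniformly in all other gate parameters. I would use the extremal-length/Beurling estimate for this. Let $p$ be the centre of the gate. The family of curves in $V$ separating $G^1_{n,m}$ from a fixed compact set, say the slice $V\cap\{\Rea z=R_n+3/2\}$ together with $-1$, contains the semicircular arcs centred at $p$ of radii between $\delta_{n,m}$ and a fixed $r_*$. Its extremal length is therefore at least $\frac1\pi\log(r_*/\delta_{n,m})\to\infty$, and this bound is uniform in the other $\delta$'s, because the geometry near $p$ within distance $r_*<1/(2n)$ is fixed. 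By conformal invariance, in $S$ the crosscut $\phi(G^1_{n,m})$ is then separated from a fixed compact set by curves of huge extremal length, which forces its diameter to be small.

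I also need to know where in $S$ this small crosscut sits. The real parts of its points are bounded above by $\Rea\phi$ on the slice $\{\Rea z=R_n+3/2\}$, which is controlled uniformly in $\uldelta$ by the final assertion of Proposition~\ref{prop_fixing_rn}. More precisely, that assertion gives $\Rea G\le L_T$ on $\{\Rea z\le T\}$ with $T=R_n+2$, uniformly in $\uldelta$. Hence $\phi(Q)$, or equivalently $G(C^1_{n,m}\cap V)$, lies in a small neighbourhood of a single boundary point $\psi(\sigma\pm i/2)$ of $\partial H$, with $\sigma$ bounded above. All boundary points of $H$ have real part $\le0$, since $\partial H=\{x=-14\log_+|y|\}$. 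So once the diameter of $G(\overline{Q}\cap V)$ is at most $1$, we get $\Rea G\le1$ on $\gamma^1_{n,m}$. To transfer smallness of diameter from $S$ to $H$, I would use uniform continuity of $\psi$ on $\{\Rea\le\sigma_{\max}\}\cap\overline S$, which holds since $\psi$ extends continuously there with bounded image. Choosing $\varepsilon_{n,m}$ so that this diameter bound holds yields \eqref{eq:prop4.3}.

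The modulus comparison then follows directly. On $\gamma^j_{n,m}$ we have $|f(e^z)|\le e^{1}+Me^{-x_n}<e+1$, using $x_n>r_0>2(\log M+1)$. On the other hand, on the separating crosscut $V\cap\{\Rea z=x_n\}$ the bound \eqref{eq:G_2} gives $\sup\Rea G\ge4$. The sup cannot come from a chamber with a nearly closed gate, and for the remaining chambers I would use that $\gamma_n$ is the only component crossing the real axis. A cleaner route is to apply the Ahlfors/harmonic-measure argument as in Observation~\ref{claim:ell} directly to $\gamma_n$: since $G(\gamma_n)$ meets $[0,\infty)$ at $G(x_n)$, which must be at least $4$ by that argument, we get $\sup_{\gamma_n}|f|\ge e^4-e^{-1}>e+1$. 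The main obstacle is the uniform control of the location and diameter of $\phi(G^1_{n,m})$ independently of the infinitely many other parameters; I expect to handle it with the extremal length bound above together with the uniform bound $L_T$ from Proposition~\ref{prop_fixing_rn}.
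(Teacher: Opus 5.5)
Your deduction of the modulus comparison from \eqref{eq:prop4.3} is fine. Applying \eqref{eq:G_2} directly to \(\gamma_n\), which on its own separates \(-1\) from the right end, is what the paper does.

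The gap is in the proof of \eqref{eq:prop4.3}, at the step you flag as the main obstacle.

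\begin{itemize}
\item \emph{The extremal-length statement is reversed.} A separating family that \emph{contains} the semicircles has extremal length \emph{at most} about \(\pi/\log(r_*/\delta_{n,m})\). It is the conjugate family, curves joining the gate to \(E\), whose extremal length tends to infinity.
\item \emph{Large extremal length does not force small diameter.} \(\phi(E)\) is not a fixed set in \(S\); it moves with all the gate parameters. The same large extremal length would occur if \(\phi(G^1_{n,m})\) simply sat far from \(\phi(E)\), for instance far to the left. Your location control cannot rule this out, because it is only an upper bound.
\item \emph{The upper bound is not supplied by what you cite.} The final assertion of Proposition~\ref{prop_fixing_rn} bounds \(\Rea G=\Rea(\psi\circ\phi)\), not \(\Rea\phi\). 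Boundary points of \(H\) far from the origin have very negative real part, so \(\{w\in S:\Rea\psi(w)\le L_T\}\) contains points of arbitrarily large real part near \(\partial S\). Without an upper bound on \(\Rea\phi\) at the gate, smallness in \(S\) does not transfer to smallness in \(H\): \(\psi\) expands by a factor comparable to \(\psi(\sigma)\) near \(\sigma+i/2\).
\end{itemize}

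Your route can be repaired. Double the quadrilateral between \(\phi(G^1_{n,m})\) and the image of the semicircle of radius \(r_*\) across \(\Ima z=1/2\). Teichm\"uller's modulus estimate, using the bounded width of \(S\), then gives \(\operatorname{diam}\phi(G^1_{n,m})=O(\delta_{n,m})\) independently of position. The bound on \(\Rea\phi\) can be taken from inside the proof of Lemma~\ref{lem:finite-order-criterion}, where \(\phi(\{\Rea z\le t\})\) lies to the left of the crosscut at real part \(c\).

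None of this is needed, though. The paper works in the fixed rectangle \(C=\operatorname{int}C^1_{n,m}\):
\begin{itemize}
\item \(\Rea G\) is harmonic in \(C\).
\item It is bounded above on the gate \(I_s\) by a constant \(B_n\) independent of \(\uldelta\). This is the same final assertion of Proposition~\ref{prop_fixing_rn} you cite, applied with \(T=R_n+1\).
\item It has boundary values \(\le0\) on \(\partial C\setminus\overline{I_s}\), since that set lies in \(\partial V\).
\end{itemize}
The maximum principle gives \(\Rea G\le B_n\,\omega(\cdot,I_s,C)\). The Poisson kernel of \(C\) is bounded between the fixed slice \(\gamma^1_{n,m}\) and a fixed neighbourhood of the gate centre, so \(\omega(z,I_s,C)\le K_{n,m}s\) on that slice. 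Taking \(\varepsilon_{n,m}<1/(B_nK_{n,m})\) gives \eqref{eq:prop4.3} directly, with a linear rate in \(\delta_{n,m}\), without locating the gate's image in \(S\) at all.
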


\begin{proof}
		Fix \(n\in\N\) and \(1\leq m\leq n\). Throughout the proof, \(\ulr\) and \(\ult\) are fixed, and all gate parameters other than \(\delta_{n,m}\) are arbitrary. First, the segment \(\gamma_n\) separates \(-1\) from the positive right end of the tract. Hence \eqref{eq:G_2} and the identity \(|g(e^w)|=\exp(\Rea G(w))\) give
		\[
		\sup_{w\in\gamma_n}|g_{(\ulr,\ult,\uldelta)}(e^w)|\geq e^4.
		\]
		We next prove the more precise estimate \eqref{eq:prop4.3} for the chamber slices. Once this is established, the claimed comparison for \(f\) will follow from the preceding lower bound and the approximation \(f=g+h\). By the real symmetry established above, it suffices to consider the upper chamber, so we fix \(j=1\). Denote
	$a\defeq \tau_2+\frac{m-1}{n},$
	$b\defeq \tau_2+\frac{m}{n},$ and
	$y_0\defeq \tau_2+\frac{2m-1}{2n},$
	so that
	\[
	C\defeq \operatorname{int} C^1_{n,m}
	=\{x+iy:R_n<x<R_n+1,\ a<y<b\}.
	\]
	Let \(\rho\defeq1/(4n)\). For \(0<s\le\rho\), set
	\[
	I_s\defeq\{R_n+1+iy: |y-y_0|<s/2\}.
	\]
	Thus \(I_s\) is the gate \(G^1_{n,m}\) when \(\delta_{n,m}=s\), and, because
	\(s\le\rho\), it stays a definite distance from the two horizontal sides of
	\(C\).
	
	Fix all parameters \(\delta_{k,l}\), \((k,l)\ne(n,m)\). For \(0<s\le\rho\), let
	\(\uldelta(s)\) be obtained from these fixed parameters by setting
	\(\delta_{n,m}=s\), and write
	\[
	V_s\defeq V(\ulr,\ult,\uldelta(s)),\qquad
	G_s\defeq G_{(\ulr,\ult,\uldelta(s))},\qquad
	u_s\defeq \Rea G_s.
	\]
	We now work only in the fixed rectangle \(C\). As the real part of a holomorphic function, the function \(u_s\) is harmonic in \(C\subset V_s\). By the final assertion of Proposition~\ref{prop_fixing_rn}, applied with \(T=R_n+1\), there is a constant \(B_n>0\), independent of \(s\) and of the remaining gate parameters, such that \(u_s\leq B_n\) on \(I_s\). On the remainder of the boundary the relevant boundary values are non-positive: more precisely, for every \(\zeta\in\partial C\setminus\overline{I_s}\), we have \(\limsup_{C\ni z\to\zeta}u_s(z)\leq0\), since these points lie on \(\partial V_s\) and \(G_s(V_s)=H\), where \(\partial H\subset\{\Rea z\leq0\}\). The two endpoints of \(I_s\) are irrelevant for harmonic measure.
	
	Let
	$
	\omega_s(z)\defeq \omega(z,I_s,C)
	$
	be the harmonic measure of \(I_s\) in \(C\). Equivalently, \(\omega_s\) is the
	harmonic function in \(C\) with boundary values \(1\) on \(I_s\) and \(0\) on
	\(\partial C\setminus I_s\), except at the two endpoints of \(I_s\), which are
	irrelevant for harmonic measure. Applying the maximum principle to
	\(u_s-B_{n}\omega_s\), we obtain
	$
	u_s(z)\le B_{n}\omega_s(z) 
	$
	for $z\in C$.
	Set
	\[
	E_{n,m}\defeq
	\{R_{n}+1+iy: |y-y_0|\le \rho/2\}.
	\]
	Then \(I_s\subset E_{n,m}\) for every \(0<s\le\rho\), and \(E_{n,m}\) is a
	compact subinterval of the right side of \(C\), disjoint from the two corners.
	The point of the following estimate is that, as viewed from the fixed slice \(\gamma^1_{n,m}\), a boundary interval of length \(s\) has harmonic measure \(O(s)\). Let \(P_C(z,\zeta)\) be the Poisson kernel of \(C\), that is, the density of harmonic measure with respect to arclength. Since \(E_{n,m}\) stays away from the corners and from \(\gamma^1_{n,m}\), the kernel \(P_C\) is bounded for \(z\in\gamma^1_{n,m}\) and \(\zeta\in E_{n,m}\). Hence there is a constant \(K_{n,m}>0\) such that
	$
	P_C(z,\zeta)\le K_{n,m}$ for $z\in\gamma^{1}_{n,m}$ and $\zeta\in E_{n,m}.$
	Therefore, for \(z\in\gamma^{1}_{n,m}\),
	\[
	\omega_s(z)
	=
	\int_{I_s}P_C(z,\zeta)\,|d\zeta|
	\le K_{n,m}|I_s|
	=
	K_{n,m}s.
	\]
	Thus
	\[
	\sup_{z\in\gamma^{1}_{n,m}}u_s(z)
	\le B_{n}K_{n,m}s
	\qquad (0<s\le\rho).
	\]
	
	Choose
	$
	\varepsilon_{n,m}\in
	\left(0,\min\left\{\rho,\frac1{B_nK_{n,m}}\right\}\right).
	$
	Then, whenever \(0<\delta_{n,m}=s\leq\varepsilon_{n,m}\), \eqref{eq:prop4.3} holds for \(j=1\). By the symmetry reduction mentioned at the beginning of the proof, it therefore holds for \(j=-1\) as well. In particular, since \(g_{(\ulr,\ult,\uldelta)}\circ\exp
	=
	\exp\circ G_{(\ulr,\ult,\uldelta)}\), we have
	\[
	\sup_{z\in\gamma^{j}_{n,m}}
	|g_{(\ulr,\ult,\uldelta)}(e^z)|\le e
	\qquad (j\in\{-1,1\}).
	\]
	
	It remains only to transfer these estimates from the model function \(g\) to the approximating entire function \(f=g+h\). Since both
	\(\gamma_n\) and \(\gamma^j_{n,m}\) are contained in the line
	\(\{\Rea z=x_n\}\), Theorem~\ref{thm_approx1.7} gives $|h_{(\ulr,\ult,\uldelta)}(e^z)|\le Me^{-x_n}$
	for $\Rea z=x_n$. We assume, by increasing the initial lower bound for \(r_0\) if necessary, that
	\(Me^{-x_n}<e^{-1}\) for every \(n\). Using
	\(f_{(\ulr,\ult,\uldelta)}
	=
	g_{(\ulr,\ult,\uldelta)}
	+
	h_{(\ulr,\ult,\uldelta)}\), we obtain, for each \(j\in\{-1,1\}\),
	\[
	\begin{aligned}
		&\sup_{w\in\gamma_n}|f_{(\ulr,\ult,\uldelta)}(e^w)|
		-
		\sup_{z\in\gamma^j_{n,m}}|f_{(\ulr,\ult,\uldelta)}(e^z)|  \\
		&\quad\ge
		\sup_{w\in\gamma_n}|g_{(\ulr,\ult,\uldelta)}(e^w)|
		-
		\sup_{z\in\gamma^j_{n,m}}|g_{(\ulr,\ult,\uldelta)}(e^z)|
		-2Me^{-x_n}  \\
		&\quad\ge e^4-e-2Me^{-x_n}\ge e^4-e-2e^{-1}>0.
	\end{aligned}
	\]
	This proves the desired comparison.
\end{proof}

Set
\[
\Delta_*\defeq
\prod_{n\in\N}\prod_{m=1}^n
[\varepsilon_{n,m},1/n].
\]
In the shooting argument, we will compare the largest value of
	\(\lvert f_{(\ulr,\ult,\uldelta)}(e^z)\rvert\) on each chamber slice with
	the largest value on the corresponding passage slice. Since these slices
	are open, the largest values need not be attained. We therefore replace
	the slices by fixed compact subsegments. On these subsegments the maxima
	are attained and, as shown below, depend continuously on \(\uldelta\). This will allow us to apply the shooting lemma, Lemma~\ref{lem:shooting}, in the proof of Corollary \ref{cor:full_shooting}. The next proposition chooses these subsegments so that the deleted endpieces are uniformly too small to contain any maximum relevant to the shooting argument.

\begin{proposition}[Choice of starred segments]\label{prop:starred_segments}
	For each \(n\in\N\), there exist compact subsegments
	\(\gamma_n^*\Subset\gamma_n\), symmetric with respect to the real axis, and compact subsegments \((\gamma^1_{n,m})^*\Subset\gamma^1_{n,m}\), \(1\le m\le n\), with \(\xi^1_{n,m}\in(\gamma^1_{n,m})^*\), such that, if \((\gamma^{-1}_{n,m})^*\defeq\{\overline z:z\in(\gamma^1_{n,m})^*\}\), then, for every \(\uldelta\in\Delta_*\),
	\begin{equation}\label{eq:starred_segments1}
		\max_{w\in\gamma_n^*}|f_{(\ulr,\ult,\uldelta)}(e^w)|\ge e^3-e^{-1}.
	\end{equation}
	
	Moreover, if \(\sigma\) is one of the slices \(\gamma_n\) or \(\gamma^j_{n,m}\), and \(\sigma^*\) denotes the corresponding starred subsegment, then for any $\uldelta\in\Delta_*$, 
	\[
	\sup_{z\in\sigma\setminus\sigma^*}|f_{(\ulr,\ult,\uldelta)}(e^z)|\le e^2+e^{-1}.
	\]
\end{proposition}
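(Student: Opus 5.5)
The plan is to work with \(u=\Rea G_{(\ulr,\ult,\uldelta)}\) and the boundary behaviour of \(u\) at the endpoints of each slice. The estimates are chosen uniformly in \(\uldelta\in\Delta_*\), and are then transferred to \(f\) using \(|h(e^z)|\le Me^{-x_n}<e^{-1}\) on \(\{\Rea z=x_n\}\), as in the proof of Proposition~\ref{prop_closing_deltan}. Since \(|g(e^z)|=e^{u(z)}\), it suffices to find compact subsegments with two properties. On each deleted endpiece we want \(u\le2\) for all \(\uldelta\in\Delta_*\); this gives \(|f|\le e^2+e^{-1}\). On \(\gamma_n^*\) we want \(\max u\ge3\); this gives \(\max|f|\ge e^3-e^{-1}\).

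\textbf{Step 1: endpieces.} Each endpoint of a slice \(\sigma\) lies on a horizontal wall of the passage or of a chamber. These walls are straight boundary segments of \(V\) on which \(u\) has boundary value \(\le 0\) (because \(\partial H\subset\{\Rea\le0\}\)), and they persist for every \(\uldelta\). Fix a small rectangle \(Q\) of fixed size (depending on \(n\)) centred at such an endpoint, lying inside the passage or chamber and away from the gates, the tunnel and the corners. Its side lying on the wall belongs to \(\partial V\) for all \(\uldelta\in\Delta_*\), since the gates sit at \(\Rea z=R_n+1\) and not at \(x_n\). By Proposition~\ref{prop_fixing_rn} with \(T=R_n+1\), we have \(u\le L_T\) on \(Q\), uniformly in \(\uldelta\). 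Comparing \(u\) with \(L_T\omega(\cdot,\partial Q\setminus\partial V,Q)\) via the maximum principle gives \(u(z)\le L_T\,\omega(z)\). This harmonic measure tends to \(0\) as \(z\) approaches the wall along the slice, and the rate is independent of \(\uldelta\). So we can choose the deleted endpieces so short that \(u\le 2\) on them. For the chambers we do this for \(j=1\) only and obtain \(j=-1\) by conjugation, using the real symmetry of \(G\). We also shorten further if needed so that \(\xi^1_{n,m}\in(\gamma^1_{n,m})^*\), and we take \(\gamma_n^*\) symmetric.

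\textbf{Step 2: lower bound on \(\gamma_n^*\).} The slice \(\gamma_n\) is a vertical crosscut at real part \(x_n>r_0/2\) separating \(-1\) from \(\infty\). Hence \eqref{eq:G_2} gives \(\sup_{\gamma_n}u\ge 4\). By Step 1, \(u\le2\) on \(\gamma_n\setminus\gamma_n^*\), so this supremum is approached on the compact set \(\gamma_n^*\), where \(u\) is continuous. Therefore \(\max_{\gamma_n^*}u\ge4\ge3\), and then \(\max_{\gamma_n^*}|f(e^w)|\ge e^4-e^{-1}\ge e^3-e^{-1}\).

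\textbf{Main obstacle.} The delicate point is the uniformity in \(\uldelta\) in Step 1. The bound \(L_T\) is already uniform, so what needs care is choosing \(Q\) so that no part of \(\partial V\) that varies with \(\uldelta\) enters \(Q\). This uses the explicit geometry: the chamber slices at \(x_n=R_n+1/4\) are at distance at least \(1/4\) from the gate line and from \(L^1_n\), and the walls have height spacing \(1/n\). Given this, the rectangle \(Q\) can be fixed independently of \(\uldelta\), and the harmonic-measure bound near a flat boundary piece (e.g. by Schwarz reflection, or by comparison with a half-disc) finishes the argument.
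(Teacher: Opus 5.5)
Your proof is correct and follows the paper's argument step for step. The ingredients are the same: a local barrier at each endpoint of a slice, the uniform bound from the final assertion of Proposition~\ref{prop_fixing_rn} with \(T=R_n+1\), the maximum principle against that bound times a harmonic measure, reflection for the lower chambers, and \eqref{eq:G_2} on \(\gamma_n\). The paper's barrier is a half-disc whose straight side lies on the wall with midpoint at the endpoint. Your rectangle should likewise have the endpoint as the midpoint of its side on the wall, rather than be centred at it, which would make it straddle the wall. The only other difference is cosmetic: you use compactness of \(\gamma_n^*\) to get \(\max_{\gamma_n^*}\Rea G\ge 4\) directly, while the paper picks a point with \(\Rea G>3\) and notes that it cannot lie in the deleted endpieces.
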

\begin{proof}
		Fix \(n\in\N\), and write \(G_{\uldelta}=G_{(\ulr,\ult,\uldelta)}\), \(f_{\uldelta}=f_{(\ulr,\ult,\uldelta)}\), \(g_{\uldelta}=g_{(\ulr,\ult,\uldelta)}\) and \(h_{\uldelta}=h_{(\ulr,\ult,\uldelta)}\). We first construct the starred subsegments of \(\gamma_n\) and of the upper chamber slices; the lower ones will then be obtained by reflection. The key local observation is that \(\Rea G_{\uldelta}\) is uniformly small near an endpoint of any of these slices.
		
				Let \(\sigma\) be one of the slices \(\gamma_n\) or \(\gamma^1_{n,m}\), \(1\leq m\leq n\), and let \(p\in\overline{\sigma}\setminus\sigma\) be one of its endpoints. Since \(x_n=R_n+1/4\), the point \(p\) lies in the relative interior of
					a straight boundary wall, away from all corners and variable gates. We may
					therefore choose a small open half-disc \(D\), independent of \(\uldelta\),
					whose straight boundary has midpoint \(p\), such that \(D\) lies in the
					interior of the passage or chamber containing \(\sigma\), and
					\(D\cap\sigma\) is a terminal subinterval of \(\sigma\). Let \(Q\) and \(B\)
					denote, respectively, the open semicircular and straight parts of
					\(\partial D\). We choose \(D\) so small that \(\Rea z<R_n+1\) on
					\(\overline D\). Then
					\[
					Q\subset V(\ulr,\ult,\uldelta)
					\quad\text{and}\quad
					B\subset\partial V(\ulr,\ult,\uldelta)
					\]
					for every \(\uldelta\in\Delta_*\).

			By the final assertion of Proposition~\ref{prop_fixing_rn}, applied with
		\(T=R_n+1\), there exists a constant \(L_n\) such that
		$
		\Rea G_{\uldelta}\le L_n$ on $Q$,
		for every \(\uldelta\in\Delta_*\). For every \(\zeta\in B\) and every \(\uldelta\in\Delta_*\), we have
	$
	\limsup_{D\ni z\to\zeta}\Rea G_{\uldelta}(z)\leq0,
	$
	since \(G_{\uldelta}\) maps \(V(\ulr,\ult,\uldelta)\) conformally onto
	\(H\) and \(\partial H\subset\{\Rea z\leq0\}\). The two points of
	\(\partial D\setminus(Q\cup B)\) are irrelevant for harmonic measure. Hence, by the maximum
			principle in \(D\),
			\[
			\Rea G_{\uldelta}(z)
			\leq
			L_n\,\omega(z,Q,D)
			\qquad
			(z\in D,\ \uldelta\in\Delta_*).
			\]
			Since \(\omega(z,Q,D)\to0\) as \(z\to p\) along \(\sigma\), there is
				a fixed terminal subinterval of \(\sigma\), adjacent to \(p\), on which
				\[
				\Rea G_{\uldelta}(z)\leq2
				\qquad (\uldelta\in\Delta_*).\]
	Apply this construction at both endpoints of every slice under
		consideration, shortening the terminal subintervals further if necessary.
		For \(\gamma_n\), choose the two deleted endpieces symmetrically and short
		enough that a non-degenerate compact segment remains. For each
		\(\gamma^1_{n,m}\), choose them short enough that the remaining compact
		subsegment contains \(\xi^1_{n,m}\), and define the lower starred
		subsegment by reflection. By the real symmetry of the tract and of
		\(G_{\uldelta}\), the same estimate holds on the reflected lower slices.
		Thus, for every slice \(\sigma\), $\Rea G_{\uldelta}(z)\leq2$ for $z\in\sigma\setminus\sigma^*$ and $\uldelta\in\Delta_*$.
	
	All these slices lie on the line \(\{\Rea z=x_n\}\).  Since \(g_{\uldelta}\circ\exp=\exp\circ G_{\uldelta}\), \(f_{\uldelta}=g_{\uldelta}+h_{\uldelta}\), and our standing choice of \(r_0\) gives \(|h_{\uldelta}(e^z)|\le Me^{-x_n}<e^{-1}\) on this line, we obtain \(|f_{\uldelta}(e^z)|\le e^2+e^{-1}\) for \(z\in\sigma\setminus\sigma^*\), uniformly in \(\uldelta\in\Delta_*\).
	
	It remains to prove the lower bound on \(\gamma_n^*\).  Fix \(\uldelta\in\Delta_*\).  Since \(\gamma_n\) separates \(-1\) from the positive right end of the tract, \eqref{eq:G_2} gives \(\sup_{w\in\gamma_n}\Rea G_{\uldelta}(w)\ge4\).  Hence there is \(w\in\gamma_n\) with \(\Rea G_{\uldelta}(w)>3\).  This point cannot lie in the deleted endpieces, where \(\Rea G_{\uldelta}\le2\), and so \(w\in\gamma_n^*\).  Therefore \(\max_{\zeta\in\gamma_n^*}|f_{\uldelta}(e^\zeta)|\ge |f_{\uldelta}(e^w)|\ge e^{\Rea G_{\uldelta}(w)}-e^{-1}>e^3-e^{-1}\).  This proves the proposition.
\end{proof}

\begin{lemma}
	\label{lem:finite_shooting}
	For each \(n\in\N\) and \(1\leq m\leq n\), define
	\[
	A_{n,m}(\uldelta)
	\defeq
	\max_{z\in(\gamma^1_{n,m})^*}|f_{(\ulr,\ult,\uldelta)}(e^z)|
	=
	\max_{z\in(\gamma^{-1}_{n,m})^*}|f_{(\ulr,\ult,\uldelta)}(e^z)|,
	\]
	where the equality follows from the real symmetry of \(f_{(\ulr,\ult,\uldelta)}\), and set
	\[
	\phi_{n,m}\colon\Delta_*\to\R,
	\qquad
	\phi_{n,m}(\uldelta)\defeq
	A_{n,m}(\uldelta)
	-
	\max_{w\in\gamma_n^*}|f_{(\ulr,\ult,\uldelta)}(e^w)|.
	\]
	Then:
	\begin{enumerate}
		\item The function \(\phi_{n,m}\) is well-defined and continuous on \(\Delta_*\).
		\item If \(\delta_{n,m}=1/n\), then \(\phi_{n,m}(\uldelta)>0\).
		\item If \(\delta_{n,m}=\varepsilon_{n,m}\), then \(\phi_{n,m}(\uldelta)<0\).
	\end{enumerate}
\end{lemma}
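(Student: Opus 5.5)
Well-definedness comes first. The starred segments are compact subsets of the open slices, and \(z\mapsto|f_{(\ulr,\ult,\uldelta)}(e^z)|\) is continuous, so each maximum is attained. The two maxima defining \(A_{n,m}\) agree: by Theorem~\ref{thm_approx1.7} we chose \(f(\bar\zeta)=\overline{f(\zeta)}\), hence \(|f(e^{\bar z})|=|f(e^z)|\), and \((\gamma^{-1}_{n,m})^*\) is the reflection of \((\gamma^1_{n,m})^*\).

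For continuity on \(\Delta_*\), I use Observation~\ref{obs_continuous_dependence}. As \(\uldelta\) varies in \(\Delta_*\) with the product topology, the domains \(V(\ulr,\ult,\uldelta)\) vary continuously in the Carath\'eodory kernel sense. Only the gate openings change, every gate is at least \(\varepsilon_{n,m}>0\) wide, and on any compact set only finitely many gates are visible. Hence \(f_{(\ulr,\ult,\uldelta)}\) depends continuously on \(\uldelta\), locally uniformly, and in particular uniformly on the fixed compact sets \(\exp((\gamma^{\pm1}_{n,m})^*)\) and \(\exp(\gamma_n^*)\). The starred segments do not depend on \(\uldelta\) by Proposition~\ref{prop:starred_segments}. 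A maximum over a fixed compact set of a family of functions that varies uniformly continuously is itself continuous: if \(\|F_1-F_2\|_\infty<\epsilon\) on the set, then the maxima differ by less than \(\epsilon\). Metrizability of \(\Delta_*\) lets us argue with sequences. This gives (1).

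For (2), I compare the full slices with the starred ones. Suppose \(\delta_{n,m}=1/n\). Proposition~\ref{prop_fixing_tn} gives
\[
|f(e^{\xi^j_{n,m}})|>\sup_{\gamma_n}|f(e^w)|.
\]
Since \(\xi^1_{n,m}\in(\gamma^1_{n,m})^*\), we get \(A_{n,m}(\uldelta)\ge|f(e^{\xi^1_{n,m}})|\). Also \(\max_{\gamma_n^*}\le\sup_{\gamma_n}\). Therefore \(\phi_{n,m}(\uldelta)>0\).

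For (3), suppose \(\delta_{n,m}=\varepsilon_{n,m}\). Estimate \eqref{eq:prop4.3} gives \(\Rea G\le1\) on \(\gamma^1_{n,m}\). Combined with \(|h(e^z)|<e^{-1}\) on the line \(\Rea z=x_n\), this yields \(A_{n,m}(\uldelta)\le e+e^{-1}\). On the other hand, \eqref{eq:starred_segments1} gives \(\max_{\gamma_n^*}\ge e^3-e^{-1}\). Since \(e^3-e^{-1}>e+e^{-1}\), we conclude \(\phi_{n,m}(\uldelta)<0\).

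The main obstacle I expect is the continuity step, specifically justifying kernel convergence of the domains as gates vary in the infinite product. I would handle it by noting that each compact subset of the limit domain meets only finitely many blockages. The gate widths stay bounded below by \(\varepsilon_{n,m}\) there, so each such compact set lies in all nearby domains. Conversely, any boundary pieces present in the limit persist in nearby domains.
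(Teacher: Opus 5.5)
Your proposal is correct and follows the paper's own argument step for step. Continuity comes from Observation~\ref{obs_continuous_dependence} together with the sup-norm bound on maxima over the fixed starred segments; positivity comes from Proposition~\ref{prop_fixing_tn} using $\xi^1_{n,m}\in(\gamma^1_{n,m})^*$; and negativity comes from \eqref{eq:prop4.3} combined with \eqref{eq:starred_segments1}. Your extra sketch of why the tracts vary continuously in the kernel sense as $\uldelta$ ranges over $\Delta_*$ supplies a hypothesis of that Observation which the paper leaves implicit, so it supplements the paper's argument rather than departing from it.
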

\begin{proof}
	Let \(\sigma\) be one of the fixed compact segments \((\gamma^1_{n,m})^*\), \((\gamma^{-1}_{n,m})^*\) or \(\gamma_n^*\).  For each \(\uldelta\in\Delta_*\), the function \(z\mapsto |f_{(\ulr,\ult,\uldelta)}(e^z)|\) is continuous on \(\sigma\), so the corresponding maximum is attained.  Moreover, if \(\uldelta'\to\uldelta\) in \(\Delta_*\), then Observation~\ref{obs_continuous_dependence} gives locally uniform convergence of \(f_{(\ulr,\ult,\uldelta')}\) to \(f_{(\ulr,\ult,\uldelta)}\) on the compact set \(\exp(\sigma)\).  Hence
	\[
	\left|\max_{z\in\sigma}|f_{(\ulr,\ult,\uldelta')}(e^z)|-\max_{z\in\sigma}|f_{(\ulr,\ult,\uldelta)}(e^z)|\right|
	\le
	\sup_{z\in\sigma}|f_{(\ulr,\ult,\uldelta')}(e^z)-f_{(\ulr,\ult,\uldelta)}(e^z)|\to0.
	\]
	The equality between the two chamber maxima follows from the real symmetry of \(f_{(\ulr,\ult,\uldelta)}\) and from the definition of the lower starred segment by reflection.  Thus \(A_{n,m}\) and \(\phi_{n,m}\) are well-defined and continuous.
	
	Suppose first that \(\delta_{n,m}=1/n\).  Proposition~\ref{prop:starred_segments} gives \(\xi^1_{n,m}\in(\gamma^1_{n,m})^*\), and Proposition~\ref{prop_fixing_tn} gives
	\[
	A_{n,m}(\uldelta)
	\ge |f_{(\ulr,\ult,\uldelta)}(e^{\xi^1_{n,m}})|
	>
	\sup_{w\in\gamma_n}|f_{(\ulr,\ult,\uldelta)}(e^w)|
	\ge
	\max_{w\in\gamma_n^*}|f_{(\ulr,\ult,\uldelta)}(e^w)|.
	\]
	Therefore \(\phi_{n,m}(\uldelta)>0\).

	Suppose now that \(\delta_{n,m}=\varepsilon_{n,m}\).   By equation \eqref{eq:prop4.3} in Proposition~\ref{prop_closing_deltan}, \(|g_{(\ulr,\ult,\uldelta)}(e^z)|\le e\) on \(\gamma^j_{n,m}\), \(j=\pm1\).  Since these segments are contained in the
	line \(\{\Rea z=x_n\}\), Theorem~\ref{thm_approx1.7}, applied with
	\(\zeta=e^z\), gives
	\[
	|h_{(\ulr,\ult,\uldelta)}(e^z)|
	\le \frac{M}{|e^z|}
	=
	Me^{-x_n}.
	\]
	By our standing choice of \(r_0\), made after fixing the universal constant
	\(M\) from Theorem~\ref{thm_approx1.7}, we have \(Me^{-x_n}<e^{-1}\) for every
	\(n\). Therefore
	$
	A_{n,m}(\uldelta)\le e+e^{-1}.
	$
	On the other hand, \eqref{eq:starred_segments1} in Proposition~\ref{prop:starred_segments} says that
	$
	\max_{w\in\gamma_n^*}
	|f_{(\ulr,\ult,\uldelta)}(e^w)|
	\ge e^3-e^{-1}.
	$
	Since \(e+e^{-1}<e^3-e^{-1}\), we obtain \(\phi_{n,m}(\uldelta)<0\).
\end{proof}
\begin{corollary}
	\label{cor:full_shooting}
	There exists \(\uldelta\in\Delta_*\) such that
	\begin{equation}\label{eq:final_shooting}
		\phi_{n,m}(\uldelta)=0
		\qquad\text{for every } n\in\N,\ 1\le m\le n.
	\end{equation}
	In particular, $v_{f_{(\ulr,\ult,\uldelta)}}(e^{x_n})\ge 2n+1$ for every  $n\in\N.$
\end{corollary}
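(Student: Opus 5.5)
The plan has two parts: solve the equations \(\phi_{n,m}=0\) by Lemma~\ref{lem:shooting}, then read off maximum modulus points from the equal maxima.

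\textbf{Existence of \(\uldelta\).} Re-index the parameters \((n,m)\), with \(n\in\N\) and \(1\le m\le n\), by a single countable index. Then \(\Delta_*=\prod[\varepsilon_{n,m},1/n]\) has the form \eqref{eq_Delta}, with \(a_{n,m}=\varepsilon_{n,m}<1/n=b_{n,m}\). Since \(\Delta_*\subset\Delta\), the functions \(\phi_{n,m}\) are defined and, by Lemma~\ref{lem:finite_shooting}(1), continuous on \(\Delta_*\). By Lemma~\ref{lem:finite_shooting}(2),(3), \(\phi_{n,m}<0\) on the face \(\delta_{n,m}=\varepsilon_{n,m}\) and \(\phi_{n,m}>0\) on the face \(\delta_{n,m}=1/n\). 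So we take \(\sigma_{n,m}=-1\) and apply Lemma~\ref{lem:shooting} with \(A\) the whole index set. This gives \(\uldelta\in\Delta_*\) satisfying \eqref{eq:final_shooting}.

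\textbf{Counting maximum modulus points.} Fix \(n\) and write \(f=f_{(\ulr,\ult,\uldelta)}\). Let \(\mu_n\defeq\max_{\gamma_n^*}|f(e^w)|\). By \eqref{eq:final_shooting}, \(\max_{(\gamma^{j}_{n,m})^*}|f(e^z)|=\mu_n\) for every \(m\) and both \(j\). By \eqref{eq:starred_segments1}, \(\mu_n\ge e^3-e^{-1}\).

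We first show that \(\mu_n\) is the maximum of \(|f(e^z)|\) over the whole line \(\Rea z=x_n\).
\begin{itemize}
\item The line meets \(V\) exactly in \(\gamma_n\cup\bigcup\gamma^j_{n,m}\).
\item Off \(\bigcup_k(V+2k\pi i)\), we have \(|f(e^z)|=|h(e^z)|<e^{-1}\), since \(Me^{-x_n}<e^{-1}\) by the standing choice of \(r_0\).
\item On the deleted endpieces \(\sigma\setminus\sigma^*\), Proposition~\ref{prop:starred_segments} gives \(|f(e^z)|\le e^2+e^{-1}<e^3-e^{-1}\le\mu_n\).
\end{itemize}
Hence the supremum over the line equals the maximum over the starred segments, which is \(\mu_n\). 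It is attained, at some point of each of the \(2n+1\) compact segments \(\gamma_n^*\) and \((\gamma^j_{n,m})^*\). Each of these segments is a subset of \(\Mlog(f)\).

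These \(2n+1\) segments are pairwise disjoint, and all lie inside one period \(\{|\Ima z|<\pi/2\}\) of the line, because \(V\subset V'\). Their images under \(\exp\) are therefore pairwise disjoint arcs of the circle \(|\zeta|=e^{x_n}\). Choosing one attained maximiser in each segment gives \(2n+1\) distinct points of \(\M(f)\cap\{|\zeta|=e^{x_n}\}\), so \(v_f(e^{x_n})\ge2n+1\).

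\textbf{Main point to check.} The key step is that the maximum over the full vertical line really is attained on the starred segments rather than near endpoints or outside the tract. This relies on the constant gap \(e^2+e^{-1}<e^3-e^{-1}\) and the bound on \(h\); the remaining steps are bookkeeping.
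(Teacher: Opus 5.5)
Your proposal is correct and is essentially the paper's proof: the same application of Lemma~\ref{lem:shooting} on the re-indexed product $\Delta_*$ (your choice $\sigma_{n,m}=-1$ is equivalent to the paper's passage to $-\phi_{n,m}$), followed by the same comparison $e^{-1}<e^2+e^{-1}<e^3-e^{-1}\le\mu_n$; you leave implicit that points in the translates $V+2k\pi i$ are handled by $2\pi i$-periodicity. One wording slip: the starred segments are not subsets of $\Mlog(f)$, only the chosen maximisers are, though your next paragraph uses exactly this corrected version, and your remark that all slices lie in $\{|\Ima z|<\pi/2\}$ usefully makes explicit why the $2n+1$ points of $\M(f)$ are distinct.
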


\begin{proof}
	Relabel the pairs \((n,m)\), \(1\le m\le n\), as a single sequence indexed by \(k\in\N\).  If \(k\) corresponds to \((n,m)\), put \(a_k\defeq\varepsilon_{n,m}\), \(b_k\defeq1/n\), \(\delta_k\defeq\delta_{n,m}\) and \(\Phi_k\defeq-\phi_{n,m}\).  By Lemma~\ref{lem:finite_shooting}, \(\Phi_k\ge0\) on the face \(\delta_k=a_k\) and \(\Phi_k\le0\) on the face \(\delta_k=b_k\).  Lemma~\ref{lem:shooting}, applied to \(\Delta_*\) and to the functions \(\Phi_k\), gives \(\uldelta\in\Delta_*\) with \(\Phi_k(\uldelta)=0\) for every \(k\), which is \eqref{eq:final_shooting}.
	
	Fix \(n\in\N\), and write \(f\defeq f_{(\ulr,\ult,\uldelta)}\) and
	\(V\defeq V(\ulr,\ult,\uldelta)\). Let
	\[
	\mathcal S_n\defeq
	\{\gamma_n\}\cup
	\{\gamma^j_{n,m}:1\le m\le n,\ j\in\{-1,1\}\}
	\]
	and recall that 
	$
	V\cap\{\Rea z=x_n\}
	=\bigcup_{\sigma\in\mathcal S_n}\sigma.$
	For
	\(\sigma\in\mathcal S_n\), let \(\sigma^*\) denote the corresponding starred
	subsegment. Define
	$
	\Lambda_n\defeq
	\max_{z\in\gamma_n^*}|f(e^z)|
	$ and note that 
	by \eqref{eq:final_shooting},
	\[
	\max_{z\in\sigma^*}|f(e^z)|=\Lambda_n
	\qquad\text{for every }\sigma\in\mathcal S_n.
	\]
	Moreover, Proposition~\ref{prop:starred_segments} gives
	\(\Lambda_n\ge e^3-e^{-1}\), and also gives
	\[
	\sup_{z\in\sigma\setminus\sigma^*}|f(e^z)|
	\le e^2+e^{-1}
	\qquad\text{for every }\sigma\in\mathcal S_n.
	\]
	Since \(e^2+e^{-1}<e^3-e^{-1}\le\Lambda_n\), the maximum on each full slice
	\(\sigma\in\mathcal S_n\) is attained on the starred subsegment \(\sigma^*\),
	and this maximum is equal to \(\Lambda_n\).
	
	It remains to compare these values with the rest of the vertical line
	\(\{\Rea z=x_n\}\). If \(z\) does not belong to any translate
	\(V+2k\pi i\), then \(f(e^z)=h_{(\ulr,\ult,\uldelta)}(e^z)\). By the
	approximation estimate and the standing choice of \(r_0\),
	$
	|f(e^z)|=|h_{(\ulr,\ult,\uldelta)}(e^z)|<e^{-1}<\Lambda_n.
	$
	If instead \(z\in V+2k\pi i\) for some \(k\in\Z\), then \(z-2k\pi i\in V\), and
	$
	|f(e^z)|=|f(e^{z-2k\pi i})|,
	$
	so the preceding analysis of the slices in \(V\) applies. Thus
	$
	v_{f_{(\ulr,\ult,\uldelta)}}(e^{x_n})\ge 2n+1.
	$
\end{proof}

\begin{proof}[Proof of Theorem~\ref{th:1}]
	Let \(\ulr\), \(\ult\), and \(\uldelta\in\Delta_*\) be the parameters obtained above, and set \(f\defeq f_{(\ulr,\ult,\uldelta)}\). By Theorem~\ref{thm_approx1.7}, \(f\in\mathcal B\). By Proposition~\ref{prop_fixing_rn}, \(f\) has finite order. Finally, by Corollary~\ref{cor:full_shooting}, \(v_f(e^{x_n})\ge 2n+1\) for every \(n\in\N\). Since \(x_n\to+\infty\), it follows that \(\limsup_{r\to\infty}v_f(r)=\infty\).
\end{proof}

\bibliographystyle{alpha}
\bibliography{References}
\end{document}